\documentclass[10pt,reqno]{amsart}

\usepackage{amssymb, verbatim, mathrsfs}
\usepackage{amsmath}
\usepackage{amscd}
\usepackage{diagrams}
\usepackage{enumitem}
\usepackage{mathtools}
\usepackage[draft=false]{hyperref}

\theoremstyle{plain}
\newtheorem{thm}{Theorem}[section]
\newtheorem*{thm*}{Theorem}
\newtheorem{prop}[thm]{Proposition}
\newtheorem{cor}[thm]{Corollary}
\newtheorem{lem}[thm]{Lemma}

\theoremstyle{definition}
\newtheorem{defn}[thm]{Definition}
\newtheorem{defns}[thm]{Definitions}
\newtheorem{eg}[thm]{Example}
\newtheorem{rem}[thm]{Remark}
\newtheorem{rems}[thm]{Remarks}
\newtheorem*{rem*}{Remark}
\newtheorem*{rems*}{Remarks}
\newtheorem*{note*}{Note}

\renewcommand{\/}{\kern 0.05em}
\newcommand{\<}{\langle}
\renewcommand{\>}{\rangle}
\newcommand{\geqs}{\geqslant}
\newcommand{\leqs}{\leqslant}
\newcommand{\inv}{^{-1}}
\newcommand{\eset}{\emptyset}
\newcommand{\del}{\partial}
\newcommand{\ts}{\textstyle}
\renewcommand{\a}{\alpha}
\renewcommand{\b}{\beta}

\renewcommand{\d}{\delta}
\newcommand{\e}{\epsilon}
\renewcommand{\k}{\kappa}
\newcommand{\lam}{\lambda}
\newcommand{\s}{\sigma}
\newcommand{\vrs}{\varepsilon}
\renewcommand{\v}{\varphi}

\newcommand{\z}{\zeta}
\newcommand{\upchi}{\raise.4ex\hbox{$\chi$}}
\newcommand{\fa}{\mathfrak a}
\newcommand{\fe}{\mathfrak e}
\newcommand{\fg}{\mathfrak g}
\newcommand{\fh}{\mathfrak h}
\newcommand{\fl}{\mathfrak l}
\newcommand{\fn}{\mathfrak n}
\newcommand{\fp}{\mathfrak p}
\newcommand{\fs}{\mathfrak s}
\newcommand{\fu}{\mathfrak u}
\newcommand{\fX}{\mathfrak X}
\newcommand{\nil}{\mathfrak{n}\mathfrak{i}\fl}

\newcommand{\A}{\mathcal A}
\newcommand{\C}{\mathcal C}
\newcommand{\E}{\mathcal E}
\renewcommand{\H}{\mathcal H}
\renewcommand{\P}{\mathcal P}
\newcommand{\T}{\mathcal T}
\newcommand{\Z}{\mathcal Z}
\newcommand{\R}{\mathbb R}
\newcommand{\CC}{\mathscr C}
\newcommand{\EE}{\mathscr E}
\newcommand{\HH}{\mathscr H}
\renewcommand{\SS}{\mathscr S}
\newcommand{\VV}{\mathscr V}
\newcommand{\newt}{\scalebox{1.4}{\ensuremath\nu}}

\newcommand{\Evr}{\E^{\/v}_{\/r}}
\newcommand{\tvr}{\tau_r^{v}}
\newcommand{\Rnabla}{R^{\nabla}\!}
\newcommand{\nab}[1]{\nabla\kern-.2em\lower.8ex\hbox{\SMALL$#1$}\/}
\newcommand{\tildenab}[1]{\tilde\nabla\kern-.2em\lower.8ex\hbox{\SMALL$#1$}\/}
\newcommand{\nabv}[1]{\nabla^v\kern-.3em\lower.8ex\hbox{\SMALL$#1$}\/}
\newcommand{\nabsq}[2]{\nabla^2\kern-.3em\lower.8ex\hbox{\SMALL$#1,#2$}\/}
\newcommand{\shape}[1]{\A\kern-.05em\lower.8ex\hbox{\SMALL$#1$}\/}
\newcommand{\tr}[1]{\trace\kern-.05em\lower.8ex\hbox{\SMALL$#1$}\kern.05em}
\newcommand{\diffop}[2]{#1\kern-.1em\lower.8ex\hbox{\SMALL$#2$}\/} 
\newcommand{\uperp}[1]{#1\raise1.1ex\hbox{\SMALL$\bot$}}
\newcommand{\abs}[1]{\lvert{#1}\rvert}
\newcommand{\norm}[1]{\lVert{#1}\rVert}

\DeclareMathOperator{\trace}{trace} 
\DeclareMathOperator{\rank}{rank}
\DeclareMathOperator{\volume}{vol} 
\DeclareMathOperator{\Ric}{Ric} 
\DeclareMathOperator{\diverge}{div}

\numberwithin{equation}{section}
\allowdisplaybreaks

\begin{document}
\title[Higher-power harmonic maps and sections]
{Higher-power harmonic maps and sections}

\author{A. Ramachandran}

\author{C.~M. Wood}
\address{Department of Mathematics \\
University of York \\
Heslington, York Y010 5DD\\
U.K.} \email{chris.wood@york.ac.uk}

\keywords{Higher-power energy, higher-power harmonic map, $r$-conformal map, higher-power harmonic section, $r$-horizontal section, Newton tensor, twisted skyrmion, Riemannian vector bundle, $r$-parallel section, sphere subbundle, $3$-dimensional unimodular Lie group, left-invariant metric, invariant (unit) vector field, Milnor map, principal Ricci/sectional curvatures.}

\subjclass[2010]{53C05, 53C43, 58E20, 58E30}

\date{\today}

\dedicatory{In memoriam Anand.}

\maketitle

\begin{abstract}
The variational theory of higher-power energy is developed for mappings between Riemannian manifolds, and more generally sections of submersions of Riemannian manifolds, and applied to sections of Riemannian vector bundles and their sphere subbundles.  A complete classification is then given for left-invariant vector fields on 3-dimensional unimodular Lie groups equipped with an arbitrary left-invariant Riemannian metric.
\end{abstract}

\section{Introduction}
In their definitive paper on harmonic mappings of Riemannian manifolds \cite{ES}, Eells and Sampson remark: ``Although the present work is devoted primarily to the (energy) functional $E$ and its extremals, there will be the indications that we will want ultimately to consider other types of energy of maps.''  Interestingly, and to the best of our knowledge unbeknown to the authors of \cite{ES}, a couple of years earlier Skyrme had used one of those ``other types of energy'' as a constituent of a mathematically tractable model for the nucleon \cite{Sk}.  The `skyrmion' model has since gained popularity in the physics and mathematical physics communities; see for example \cite{GHM, HMS, Sl, SS, Z}.  However, despite this, and intense interest in the theory of harmonic maps, so far there has been little work on establishing a general theory of the ``other types of energy'' mentioned in \cite{ES}.  In this paper we will make some moves in this direction.   

\par
Let $\v\colon(M,g)\to(N,h)$ be a smooth mapping of Riemannian manifolds, with $\dim M=m$.  Viewing the $2$-tensors $g$ and $\v^*h$ as morphisms $TM\to T^*M$, the energy of $\v$ is obtained by integrating (globally, or locally, depending on whether $M$ is compact) the trace of the tensor field $\a=g\inv\v^*h$ (sometimes known as the \textsl{Cauchy-Green tensor} \cite{Sl}).  The \textsl{higher-power energies} of $\v$ are obtained by integrating the higher-degree elementary invariants of $\a$, the terminology reflecting that these correspond to higher exterior powers of $d\v$.  Geometrically, therefore, they measure the extent to which $\v$ deforms higher-dimensional infinitesimal volume.  In particular, the highest-power energy measures the deformation by $\v$ of full volume, and although it does not coincide with the volume functional has the same critical points (Remark \ref{rem:volume}); cf.\ the relationship between energy and length when $m=1$.  Furthermore, higher-power energies allow the generalisation to higher dimensions of some well-known properties of standard energy when $m=2$, such as conformal invariance and area majorisation (Propositions \ref{prop:conf} and \ref{prop:major}, Corollary \ref{cor:confmajor}, Example \ref{eg:calibrated}).

\par
More generally, let $\pi\colon (P,k)\to (M,g)$ be a smooth submersion of Riemannian manifolds, and let $\s\colon M\to P$ be a smooth section.  At the outset, the Riemannian metrics $g,k$ are not assumed to have any compatibility properties; for example, $\pi$ is not necessarily a Riemannian submersion \cite{ON}, or semi-conformal as in the theory of harmonic morphisms \cite{F}.  Nor is $\pi$ assumed to be a fibre bundle, or locally trivial, and there are no assumptions on the geometry of its fibres.  Nevertheless it is possible to define \textsl{higher-power vertical energies,} as measurements of the total twisting of $\s$, in the sense of deviation from horizontality.  The details of these constructions are presented in \S\ref{sec:hp}, where we also introduce the families of \textsl{Newton tensors} associated to $\v$ and $\s$; these play an important r\^ole in the subsequent variational theory.  

\par
In \S\ref{sec:firstvar} we derive the Euler-Lagrange equations for the higher-power energy functionals, and thereby characterise \textsl{higher-power harmonic maps.}  For any integer $r=1,\dots,m$ we refer to the critical points of the $r$-th higher-power energy (ie.\ the functional obtained from the $r$-th elementary invariant of $\a$) as \textsl{$r$-power harmonic maps,} or more briefly \textsl{$r$-harmonic maps.} (We are aware of the overlap in terminology with the, somewhat different, variational theory of `$p$-energy' \cite[Chapter 3]{EL2}, \cite{BG}, \cite{Mis}, with which we shall not be concerned.)  We refer to the Euler-Lagrange operator $\tau_r(\v)$ as the \textsl{$r$-th tension field} of $\v$ (Definition \ref{defn:rtension}).  In contrast to \cite{ES}, in this paper we make no attempt to explore the analytic properties of $\tau_r(\v)$ (such as existence of solutions, unique continuation, etc.), and simply remark that for $r>1$ it is a second order quasi-linear semi-elliptic partial differential operator.

\par
The $r$-harmonic map equations (Theorem \ref{thm:rhamap}) emerge as a corollary (Example \ref{eg:srhamap}) of more general computations for the first variation of higher-power vertical energy with respect to variations through sections (Theorems \ref{thm:firstvar1} and \ref{thm:firstvar2}).  The Euler-Lagrange equations for this constrained variational problem characterise what we choose to call \textsl{$r$-harmonic sections.}  Our derivation is analogous to the coordinate-free approach for harmonic maps used in \cite{EL1}, albeit  more subtle.  Specifically, the analogue in this context of the second fundamental form (of a mapping) is no longer symmetric, its asymmetry being a manifestation of the \textsl{curvature} of the submersion $\pi$ (Definition \ref{defn:curvform}).  This notion of curvature, although somewhat unorthodox at this level of generality, encodes essentially the same geometric and topological information as the second fundamental form of a Riemannian submersion (Proposition \ref{prop:curvformgeom}, cf.\ Lemma \ref{lem:fundform}); it coincides with the standard definition of curvature when, for example, $\pi$ is a vector bundle with linear connection (Remarks \ref{rems:can}).  The appearance of curvature in the $r$-harmonic section equations (Theorem \ref{thm:firstvar2}) expresses the interaction of the section with the extrinsic geometry of the fibres of $\pi$.  Although in general unrealistic to assume that $\pi$ is flat (ie.\ its curvature vanishes; cf.\ Proposition \ref{prop:curvformgeom}, Theorem \ref{thm:3dflat}), if $\pi$ has totally geodesic (t.g.) fibres the equations simplify to the vanishing of the \textsl{$r$-th vertical tension field} of $\s$ (Definition \ref{defn:rvtension}); this is the case for most commonly encountered examples, such as fibre bundles with Kaluza-Klein geometry (Remarks \ref{rems:firstvar2}).

\par
It is also of interest to study sections that are $r$-harmonic maps.  Since these are critical points of an unconstrained variational problem, the Euler-Lagrange operator $\tau_r(\s)$ acquires a horizontal component, which may be deconstructed when $\pi$ is a Riemannian submersion with t.g.\  fibres (Theorem \ref{thm:flatharm}).  Interestingly, the curvature of $\pi$ also appears here, simplifying this time when $\s$ is a \textsl{flat section} (Definition \ref{defn:curvform}), a condition that generalises flatness of $\pi$, and horizontality of $\s$ (Remarks \ref{rems:curvform}), although still somewhat restrictive (Theorem \ref{thm:3dflat}).  Under the same hypotheses (viz.\ $\pi$ a Riemannian submersion with t.g.\ fibres) the vertical component of $\tau_r(\s)$ fragments into a linear combination of the $r$-th vertical tension field  and those of lower degree (Theorem \ref{thm:hasec}), prompting the definition of a \textsl{twisted $r$-skyrmion} (Definition \ref{defn:twistskyr}).  It is notable that both components of $\tau_r(\s)$ involve the divergence of a relevant Newton tensor, which, in contrast to the Newton tensors associated to the shape operator of an isometric immersion or the covariant Hessian of a smooth function \cite{R1, R2, R3}, does not in general vanish (see for example Theorem \ref{thm:newton} and Lemma \ref{lem:vnewt2}); indeed,  for mappings, the divergence of the $(r-1)$-st Newton tensor is the principle obstruction to a totally geodesic map being $r$-harmonic (Remark \ref{rem:reilly}).  We conclude Section \ref{sec:firstvar} with a characterisation of $r$-harmonicity for flat sections, and in particular a synopsis of the horizontal case (Theorem \ref{thm:harmhoriz}).

\par
In \S\ref{sec:vbundles} the results of \S\ref{sec:firstvar} are interpreted for Riemannian vector bundles, equipped with their natural (Sasaki) geometry.  We prove two rigidity results: for bundles with compact base (Theorem \ref{thm:compactvb}), and for sections of constant length (Corollary \ref{cor:hphasecsb}).  These generalise phenomena that are familiar when $r=1$ \cite{I, N, W1}: `rigid' sections are those that are \textsl{$r$-parallel} (Definition \ref{defn:rpar}), becoming more `flexible' as $r$ increases (see for example Theorem \ref{thm:3dzero}).   For a section of constant length, rigidity can be mitigated by working within the corresponding sphere subbundle, a proven methodology when $r=1$ \cite{Gil}; intuitively, reducing the class of variations increases the likelihood of critical points.  In this case, higher-power energies may be regarded as measurements of `total bending' \cite{Wie}. The Euler-Lagrange equations for this restricted variational problem are obtained in Theorem \ref{thm:hphasecsb}, the `untwisted version' of which yields a characterisation of $r$-harmonic maps into spheres (Corollary \ref{cor:hphamapsphere}) generalising \cite{Sm},  a potentially interesting topic for further study.

\par
The primary example of a Riemannian vector bundle is of course the tangent bundle of a Riemannian manifold, equipped with the Levi-Civita connection, and in \S\ref{sec:3dlie} we examine in detail the various facets of the foregoing theory in the context of invariant vector fields $\s$ on a $3$-dimensional Lie group $G$, which for simplicity we assume to be unimodular and endowed with a left-invariant metric.  The algebraic and geometric properties of $G$ were worked out, beautifully, by Milnor in \cite{Mil} (presented here as 'Milnor's list'), using techniques that effectively reduce our computations to classical vector algebra, which as far as possible we try to keep coordinate-free and geometrically cogent.  (Further simplification via the Principle of Symmetric Criticality \cite{Pal} is however not possible, since in the majority of cases $G$ is non-compact.)  Assuming without loss of generality that $\s$ has unit length (Corollary \ref{cor:hphasecusb}), we refer to $r$-harmonic sections of the unit tangent bundle $UG\to G$ as \textsl{$r$-harmonic unit vector fields,} generalising terminology used when $r=1$ \cite{Gil}.  Denoting the set of $r$-harmonic invariant unit fields by $\H_r$, we obtain a highly geometric classification of $\H_2$ (Theorem \ref{thm:2pharmsb}), 
and recover the classification of $\H_1$ obtained in \cite{GDV}, which we recast in a more geometric light (Theorem \ref{thm:1pharmsb}).  \textit{En route} we calculate the first vertical Newton tensor, and observe precisely when it is solenoidal (Theorem \ref{thm:newton}).  We identify the subsets $\Z_r\subseteq\H_r$ of absolute minimum $r$-th vertical energy (viz.\ the $r$-parallel invariant vector fields), which by rigidity are precisely the invariant $r$-harmonic sections of the full tangent bundle $TG\to G$ (Theorem \ref{thm:3dzero}), and observe that $\H_r=\H_{r-1}\cup\Z_r$ for $r=2,3$ (Corollary \ref{cor:harmz}), a feature that we suspect is rather specific to this example.  A comprehensive classification based on Ricci curvature is then presented (Corollary \ref{cor:harmz}), from which we explicitly determine $\H_r$ and $\Z_r$ for each geometry on Milnor's list.  We then discover which $\s$ are twisted skyrmions (Theorem \ref{thm:uniskyrme}), and use this to classify the invariant unit fields that are $r$-harmonic maps $G\to UG$ for $r=1,2,3$ (Theorem \ref{thm:hpharmaps}).  When $r=3$ these are precisely the invariant minimal immersions, and we recover results of \cite{TV}. 

\par
The paper is a development of research carried out in \cite{Ram, W1}.  All manifolds, mappings, bundles etc.\ are assumed smooth ($\CC^\infty$), and to simplify notation, with one notable exception all connections are denoted by (undecorated) $\nabla$.

\section{Higher-power energy and Newton tensors}
\label{sec:hp}
Let $A$ be a $m\times m$ (real) matrix, with characteristic polynomial:
\begin{align*}
\upchi_A(\lam)
&=\det(A-\lam 1)
=\sum_{k=0}^m (-1)^k\vrs_{m-k}(A)\/\lam^k.
\end{align*}
Thus $\vrs_0(A)=1$, $\vrs_1(A)=\trace(A)$, $\vrs_m(A)=\det(A)$, and in general the \textsl{$r$-th elementary invariant} $\vrs_r(A)$ is a $GL_m$-invariant homogeneous  polynomial of degree $r$, the sum of the leading $r\times r$ minors of $A$.  The elementary invariants may be characterised recursively; for example:
\begin{align}
\vrs_2(A)
&=\textstyle\sum_{i<j}(A_{ii}A_{jj}-A_{ij}A_{ji})
=\tfrac12\textstyle\sum_{i,j}(A_{ii}A_{jj}-A_{ij}A_{ji}) 
\notag \\[0.5ex]
&=\textstyle\tfrac12(\trace(A)^2-\trace(A^2)),
\label{eq:sigma2}
\end{align}
and in general by the \textsl{Newton-Girard identity} \cite[p.~81]{VW}:
\begin{equation}
r\/\vrs_r(A)
=\sum_{k=1}^r (-1)^{k-1} \vrs_{r-k}(A)\,\vrs_1(A^k).
\label{eq:sigmar}
\end{equation}
The $r$-th \textsl{Newton polynomial} of $A$ is the following interpolant of $\upchi_A(\lam)$:
\begin{equation}
\upchi_r(\lam)
=\upchi_{A,r}(\lam)
=\sum_{k=0}^r (-1)^k\vrs_{r-k}(A)\/\lam^k. 
\label{eq:newtpoly}
\end{equation}
Thus $\upchi_0(\lam)=1$, $\upchi_1(\lam)=\trace(A)-\lam$, and in general $\upchi_r(\lam)$ may be characterised recursively by:
\begin{equation}
\upchi_{r}(\lam)=\vrs_r(A)-\lam\/\upchi_{r-1}(\lam).
\label{eq:newtrec}
\end{equation}  

\par
By $GL_m$-invariance, $\vrs_r(\a)$ and $\upchi_{\a,r}(\lam)$ are defined for any linear endomorphism $\a$ of an $m$-dimensional (real) vector space $V$.  Evaluation of the Newton polynomials (of $\a$) at $\a$ yields an associated family of \textsl{Newton endomorphisms} $\upchi_r(\a)\colon V\to V$.  In particular, $\upchi_0(\a)=1_V$, $\upchi_1(\a)=\trace(\a)1_V-\a$, and $\upchi_m(\a)=0$ by the Cayley-Hamilton Theorem.  For future use we record the following identities.  

\begin{lem}\label{lem:char}
For all integers $r=1,\dots,m${\rm:}

\begin{itemize}[leftmargin=1.75em, itemsep=1ex]
\item[\rm i)]
$\vrs_r(1+\a)
=\vrs_r(\a)+(m-r+1)\vrs_{r-1}(\a)+\cdots
+\textstyle\binom{m-1}{r-1}\vrs_1(\a)+\textstyle\binom{m}{r}$.

\item[\rm ii)]
$\upchi_r(1+\a)
=\upchi_r(\a)+(m-r)\upchi_{r-1}(\a)+\cdots
+\textstyle\binom{m-2}{r-1}\upchi_1(\a)
+\textstyle\binom{m-1}{r}1_V$.

\item[\rm iii)]
$\upchi_{c\a,r}(c\/\a)=c^r\/\upchi_{\a,r}(\a)$,
for all $c\in\R$.
\end{itemize}
\end{lem}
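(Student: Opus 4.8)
The plan is to deduce all three identities from the elementary generating function
\[
\det(1+t\/\a)=\textstyle\sum_{r=0}^m\vrs_r(\a)\/t^r,
\]
which is just the defining expansion of $\upchi_\a(\lam)$ after the substitution $\lam=-1/t$ followed by clearing the denominator (equivalently, the familiar expansion of $\det(1+t\/\a)$ as a sum over principal minors). Here $t$ is a formal variable; for (ii) the coefficients will live in the commutative subring $\R[\a][[t]]$ of $\mathrm{End}(V)[[t]]$, so that $1+t\/\a$ may be inverted formally and scalar factors freely cancelled.

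For (i), I would apply the generating function with $\a$ replaced by $1+\a$ and use multilinearity of the determinant to write $\det\bigl((1+t)1+t\/\a\bigr)=(1+t)^m\det\bigl(1+\tfrac{t}{1+t}\/\a\bigr)$ in $\R[[t]]$, noting that $1+t$ is a unit and $\tfrac{t}{1+t}$ has vanishing constant term. Substituting the generating function once more gives $\sum_r\vrs_r(1+\a)\/t^r=\sum_{j=0}^m\vrs_j(\a)\/t^j(1+t)^{m-j}$, and reading off the coefficient of $t^r$ by the binomial theorem yields $\vrs_r(1+\a)=\sum_{j=0}^r\binom{m-j}{r-j}\vrs_j(\a)$; identifying $\binom{m-r+1}{1}=m-r+1$, then $\binom{m-1}{r-1}$ and $\binom{m}{r}$, this is precisely (i). (A purely combinatorial route is also available: from $\det(1+\b)=\sum_{T}\det(\b|_T)$ summed over all index subsets $T$, one sums the principal $r\times r$ minors of $1+\a$ and counts, for each $T$ of size $j$, the $\binom{m-j}{r-j}$ sets of size $r$ containing it.)

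For (ii), I would first upgrade the basic identity to a generating function for the Newton endomorphisms: multiplying $\sum_j\vrs_j(\a)\/t^j=\det(1+t\/\a)$ by the formal inverse $(1+t\/\a)\inv=\sum_{k\ge0}(-t\/\a)^k$ and comparing with \eqref{eq:newtpoly} at $\lam=\a$ gives
\[
\textstyle\sum_{r\ge0}\upchi_{\a,r}(\a)\/t^r=\det(1+t\/\a)\,(1+t\/\a)\inv .
\]
Now running the substitution of (i) again — replacing $\a$ by $1+\a$ and using $\det\bigl((1+t)1+t\/\a\bigr)=(1+t)^m\det\bigl(1+\tfrac{t}{1+t}\/\a\bigr)$ together with $\bigl((1+t)1+t\/\a\bigr)\inv=(1+t)\inv\bigl(1+\tfrac{t}{1+t}\/\a\bigr)\inv$ — one power of $(1+t)$ cancels and
\[
\textstyle\sum_{r\ge0}\upchi_{1+\a,r}(1+\a)\/t^r=(1+t)^{m-1}\sum_{s\ge0}\upchi_{\a,s}(\a)\bigl(\tfrac{t}{1+t}\bigr)^{s}=\sum_{s\ge0}\upchi_{\a,s}(\a)\/t^s(1+t)^{m-1-s}.
\]
Extracting the coefficient of $t^r$ gives $\upchi_r(1+\a)=\sum_{s=0}^r\binom{m-1-s}{r-s}\upchi_s(\a)$, which is (ii) after the evident identification of binomial coefficients; the one awkward index $s=m$ (occurring only when $r=m$) contributes nothing, since $\upchi_m(\a)=0$ by Cayley--Hamilton, consistently with $\upchi_m(1+\a)=0$. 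If one prefers to avoid power series, the same formula follows by induction on $r$ from the recursion \eqref{eq:newtrec}: write $\upchi_r(1+\a)=\vrs_r(1+\a)\/1_V-(1+\a)\/\upchi_{r-1}(1+\a)$, substitute (i) and the inductive formula for $\upchi_{r-1}(1+\a)$, and reduce $(1+\a)\/\upchi_s(\a)$ via $\a\/\upchi_s(\a)=\vrs_{s+1}(\a)\/1_V-\upchi_{s+1}(\a)$; the $\vrs_j(\a)$-terms with $j\ge1$ then cancel in pairs, while the surviving coefficients collapse to the claimed ones by Pascal's rule.

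Finally, (iii) is immediate from homogeneity: $\vrs_{r-k}$ has degree $r-k$, so $\vrs_{r-k}(c\/\a)=c^{r-k}\vrs_{r-k}(\a)$, whence \eqref{eq:newtpoly} gives $\upchi_{c\/\a,r}(c\/\a)=\sum_{k=0}^r(-1)^k c^{r-k}\vrs_{r-k}(\a)\,c^k\a^k=c^r\/\upchi_{\a,r}(\a)$. The only point I would be careful about is thus the manipulation of $\mathrm{End}(V)$-valued power series in (ii) (and, in the inductive variant, the Pascal bookkeeping): the former is harmless because everything happens in the commutative ring $\R[\a][[t]]$, but it can be sidestepped entirely by observing that each identity is a polynomial identity in the entries of $\a$, hence may be verified on the Zariski-dense set of $\a$ with distinct eigenvalues, where $\vrs_r(\a)$ and $\upchi_{\a,r}(\a)$ are computed coordinatewise and the claims reduce to classical symmetric-function manipulations.
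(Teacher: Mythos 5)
Your argument is correct throughout. For (i) it is essentially the paper's proof in different clothing: the paper expands $\upchi_{1+A}(\lam)=\upchi_A(\lam-1)$ and inspects coefficients, which is exactly your substitution $\det\bigl((1+t)1+t\/\a\bigr)=(1+t)^m\det\bigl(1+\tfrac{t}{1+t}\/\a\bigr)$ after the change of variable $\lam=-1/t$. For (ii) and (iii) the paper instead argues by induction on $r$ from the recursion \eqref{eq:newtrec}, invoking homogeneity of the $\vrs_j$ for (iii); your primary route for (ii) — the generating function $\sum_{r}\upchi_{\a,r}(\a)\/t^r=\det(1+t\/\a)\,(1+t\/\a)\inv$ (i.e.\ the adjugate of $1+t\/\a$), pushed through the same substitution — is genuinely different and arguably cleaner, since it produces the closed form $\upchi_r(1+\a)=\sum_{s=0}^r\binom{m-1-s}{r-s}\upchi_s(\a)$ in one stroke and makes the extra drop of one in the upper binomial index (from $\binom{m-j}{r-j}$ in (i) to $\binom{m-1-s}{r-s}$ in (ii)) transparent as the cancellation of a single factor of $(1+t)$; the cost is the small amount of care you correctly flag about working in the commutative ring $\R[\a][[t]]$ and about the degenerate index $s=m$, both of which you dispose of properly (the latter via $\upchi_m(\a)=0$). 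Your direct computation of (iii) from \eqref{eq:newtpoly} and homogeneity is simpler than the paper's induction and equally valid, and your sketched inductive alternative for (ii) via $\a\/\upchi_s(\a)=\vrs_{s+1}(\a)1_V-\upchi_{s+1}(\a)$ and Pascal's rule is precisely the argument the paper intends.
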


\begin{proof}
(i) follows by expanding the characteristic polynomial 
$\upchi_{1+A}(\lam)=\upchi_A(\lam-1)$ and inspecting coefficients, where $A$ is any matrix representing $\a$.  (ii) and (iii)  follow inductively from \eqref{eq:newtrec}, using the homogeneity of the elementary invariants to obtain (iii).
\end{proof}

We record two further useful properties of Newton endomorphisms,  the first of which is an immediate consequence of Newton's identity \eqref{eq:sigmar}.

\begin{lem}\label{lem:trace}
For all $r=1,\dots,m${\rm:}
$$
\trace(\a\circ\upchi_{r-1}(\a))=r\/\vrs_r(\a).
$$
\end{lem}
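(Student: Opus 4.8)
The plan is to unwind the definition of the Newton endomorphism and match the result termwise against the Newton--Girard identity \eqref{eq:sigmar}. First I would write out $\upchi_{r-1}(\a)$ from \eqref{eq:newtpoly}, namely $\upchi_{r-1}(\a)=\sum_{k=0}^{r-1}(-1)^k\vrs_{r-1-k}(\a)\,\a^k$, and then compose with $\a$ to get $\a\circ\upchi_{r-1}(\a)=\sum_{k=0}^{r-1}(-1)^k\vrs_{r-1-k}(\a)\,\a^{k+1}$. A shift of summation index $j=k+1$ rewrites this as $\sum_{j=1}^{r}(-1)^{j-1}\vrs_{r-j}(\a)\,\a^{j}$.

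Next I would take the trace, using its linearity together with the identity $\trace(\a^j)=\vrs_1(\a^j)$ (which is just the $r=1$ case of the elementary-invariant definition applied to the endomorphism $\a^j$). This yields
\[
\trace\bigl(\a\circ\upchi_{r-1}(\a)\bigr)
=\sum_{j=1}^{r}(-1)^{j-1}\vrs_{r-j}(\a)\,\vrs_1(\a^{j}),
\]
and the right-hand side is precisely $r\,\vrs_r(\a)$ by \eqref{eq:sigmar} (which, although stated for matrices, holds for any endomorphism of an $m$-dimensional vector space by $GL_m$-invariance, exactly as noted in the paragraph preceding Lemma~\ref{lem:char}).

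There is essentially no obstacle here: the content is entirely contained in \eqref{eq:sigmar}, and the only thing to be careful about is the index bookkeeping in the reindexing step and the observation that $\upchi_m(\a)=0$ is consistent with the $r=m$ case (since then the left side is $\trace(\a\circ\upchi_{m-1}(\a))$, not involving $\upchi_m$, so no degeneracy arises). I would present the argument in two or three displayed lines.
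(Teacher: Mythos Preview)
Your argument is correct and is exactly the approach the paper indicates: it states the lemma as ``an immediate consequence of Newton's identity \eqref{eq:sigmar}'' without spelling out the details, and your expansion of $\a\circ\upchi_{r-1}(\a)$ followed by term-matching against \eqref{eq:sigmar} is precisely that immediate consequence made explicit.
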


Secondly, suppose $\a_t$ is a differentiable 1-parameter family of endomorphisms of $V$, with `variation vector' $\b_t=d\a_t/dt$ (another endomorphism).  The elementary invariants of $\a_t$ are then differentiable $\R$-valued functions (of $t$).  Clearly:
$$
\frac{d}{dt}\/\vrs_1(\a_t)
=\trace(\b_t)=\trace(\b_t\circ\upchi_0(\a_t)),
$$
and by \eqref{eq:sigma2}:
\begin{align*}
\frac{d}{dt}\/\vrs_2(\a_t)
&=\trace(\a_t)\trace(\b_t)-\trace(\b_t\circ\a_t) 
=\trace(\b_t\circ\upchi_1(\a_t)).
\end{align*}
In general:

\begin{lem}\cite[Lem.\ A]{R1}\label{lem:eldiff}
For all $r=1,\dots,m${\rm:}
$$
\frac{d}{dt}\,\vrs_r(\a_t)
=\trace (\b_t\circ\upchi_{r-1}(\a_t)).
$$
\end{lem}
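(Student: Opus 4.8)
The plan is to bundle all the elementary invariants into a single generating function and differentiate just once. Introducing a real parameter $s$, the characteristic-polynomial identity $\det(A-\lam 1)=\sum_{k=0}^{m}(-1)^k\vrs_{m-k}(A)\lam^k$ becomes, on substituting $\lam=-1/s$ and multiplying through by $s^m$,
\[
\det(1_V+s\,\a)=\sum_{r=0}^{m}\vrs_r(\a)\,s^r ,
\]
valid for every endomorphism $\a$ of $V$. Applying this with $\a=\a_t$ and differentiating in $t$, the lemma will drop out by comparing coefficients of $s^r$, once $\tfrac{d}{dt}\det(1_V+s\,\a_t)$ has been identified.

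The first step is to establish the operator identity
\[
\operatorname{adj}(1_V+s\,\a)=\sum_{r=0}^{m-1}\upchi_r(\a)\,s^r ,
\]
where $\operatorname{adj}$ denotes the adjugate, characterised by $(1_V+s\,\a)\operatorname{adj}(1_V+s\,\a)=\det(1_V+s\,\a)\,1_V$. One multiplies the right-hand side by $1_V+s\,\a$ and collects powers of $s$: the constant term is $\upchi_0(\a)=1_V=\vrs_0(\a)1_V$; each intermediate coefficient is $\upchi_j(\a)+\a\circ\upchi_{j-1}(\a)=\vrs_j(\a)1_V$ by the Newton recursion \eqref{eq:newtrec} (evaluated at $\lam=\a$); and the top coefficient is $\a\circ\upchi_{m-1}(\a)=\vrs_m(\a)1_V$, again by \eqref{eq:newtrec} together with the Cayley--Hamilton identity $\upchi_m(\a)=0$. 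Hence the product equals $\det(1_V+s\,\a)\,1_V$, as required.

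The second step applies Jacobi's formula for the derivative of a determinant: for $s$ small enough that $1_V+s\,\a_t$ is invertible,
\[
\frac{d}{dt}\det(1_V+s\,\a_t)=\trace\!\bigl(\operatorname{adj}(1_V+s\,\a_t)\circ s\,\b_t\bigr)
=s\sum_{r=0}^{m-1}\trace\!\bigl(\b_t\circ\upchi_r(\a_t)\bigr)\,s^r .
\]
Both this expression and $\sum_{r}\tfrac{d}{dt}\vrs_r(\a_t)\,s^r$ are polynomials in $s$ agreeing on a neighbourhood of $0$, so their coefficients coincide; reading off the coefficient of $s^r$ for $r=1,\dots,m$ gives $\tfrac{d}{dt}\vrs_r(\a_t)=\trace(\b_t\circ\upchi_{r-1}(\a_t))$.

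The only genuinely delicate point is the adjugate identity; the remainder is bookkeeping and the classical Jacobi formula. (An alternative, more computational route is induction on $r$ directly from the Newton--Girard identity \eqref{eq:sigmar}: differentiate $r\,\vrs_r(\a_t)=\sum_{k=1}^{r}(-1)^{k-1}\vrs_{r-k}(\a_t)\,\vrs_1(\a_t^{\,k})$, use $\tfrac{d}{dt}\vrs_1(\a_t^{\,k})=k\,\trace(\b_t\circ\a_t^{\,k-1})$ together with the inductive hypothesis applied to the factors $\vrs_{r-k}(\a_t)$ with $k\geqs 1$, and reassemble the result via \eqref{eq:newtrec}; but the reassembly into $\trace(\b_t\circ\upchi_{r-1}(\a_t))$ is the awkward part.)
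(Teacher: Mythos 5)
Your proof is correct. Note that the paper does not prove this lemma at all: it is quoted from Reilly \cite[Lem.\ A]{R1}, whose own argument (like the inductive alternative you sketch at the end) is a direct multilinear/recursive computation with the elementary invariants. Your route is genuinely different and self-contained: the generating identity $\det(1_V+s\,\a)=\sum_{r=0}^m\vrs_r(\a)s^r$, the expansion $\operatorname{adj}(1_V+s\,\a)=\sum_{r=0}^{m-1}\upchi_r(\a)s^r$ (which, as you observe, is exactly the recursion \eqref{eq:newtrec} evaluated at $\a$ together with $\upchi_m(\a)=0$, i.e.\ Cayley--Hamilton), and Jacobi's formula; comparing coefficients of $s^r$ then yields all $m$ identities at once, which is arguably tidier than the degree-by-degree induction via \eqref{eq:sigmar}. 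Two small points of hygiene, neither a gap: the relation $(1_V+s\,\a)\operatorname{adj}(1_V+s\,\a)=\det(1_V+s\,\a)1_V$ determines the adjugate only where the determinant is nonzero, so your identification of $\sum_r\upchi_r(\a)s^r$ with the adjugate should be read as holding first for small $s$ and then for all $s$ by polynomiality (which is how you use it); and the radius of $s$ for which $1_V+s\,\a_t$ is invertible may depend on $t$, but since the coefficient comparison is performed at each fixed $t$ on a neighbourhood of $s=0$, and both sides are polynomials in $s$, this does not affect the conclusion.
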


Now let $\v\colon(M,g)\to(N,h)$ be a smooth mapping of Riemannian manifolds, and let $\a$ be the self-adjoint $(1,1)$-tensor on $M$ metrically dual to $\v^*h$:
\begin{equation}
g(\a(X),Y)=\v^*h(X,Y),
\label{eq:cg}
\end{equation}
for all $X,Y\in\fX(M)$.
The elementary invariants of $\a$, computed pointwise on tangent spaces, define smooth $\R$-valued functions on $M$, which, being dependent primarily on $\v$ (assuming $g$ and $h$ are fixed) will be denoted $\vrs_r(\v)\colon M\to\R$ for all $r=1,\dots,m=\dim M$.  Likewise, the Newton endomorphisms of $\a$ define a family of self-adjoint $(1,1)$-tensors on $M$, which we denote by $\nu_r(\v)$ and refer to  as the \textsl{Newton tensors} of $\v$.  If $\{e_i\}$ is a local orthonormal tangent frame of $(M,g)$ then:
\begin{align*}
\vrs_1(\v)=\ts\sum_i g(\a(e_i),e_i)
=\ts\sum_i h(d\v(e_i),d\v(e_i))
=\lVert d\v\rVert^2.
\end{align*}
Furthermore by \eqref{eq:sigma2} and \eqref{eq:cg}:
\begin{align*}
\vrs_2(\v)
&=\tfrac12\ts\sum_{i,j}
\bigl(\abs{d\v(e_i)}^2 \abs{d\v(e_j)}^2
-h(d\v(e_i),d\v(e_j))^2\bigr) 
\label{eq:sigma2phi} \\[0.5ex]
&=\tfrac12\ts\sum_{i,j}
\abs{d\v(e_i)\wedge d\v(e_j)}^2 
\notag \\[0.5ex]
&=\ts\sum_{i<j}
\abs{(d\v\wedge d\v)(e_i\wedge e_j)}^2 \\
&=\norm{d\v\wedge d\v}^2,
\notag
\end{align*}
and by Newton's identity \eqref{eq:sigmar}:
\begin{equation}
\vrs_r(\v)=\norm{(d\v)^r}^2,
\label{eq:extpower}
\end{equation}
where:
$$
(d\v)^r=d\v\wedge\cdots\wedge d\v,
$$ 
the $r$-th exterior power, viewed as an $r$-form on $M$ with values in $\wedge^r(\v\inv TN)$, where $\v\inv TN\to M$ is the pullback bundle.  Thus $\vrs_r(\v)$ may be interpreted geometrically as the average infinitesimal distortion by $\v$ of squared $r$-volume, with $\vrs_r(\v)_x=0$ precisely when $\rank d\v_x<r$.  Observing that
$\tfrac12\/\vrs_1(\v)$ is the energy density of $\v$, we define the \textsl{$r$-th higher-power energy} of $\v$, or more briefly the \textsl{$r$-th energy,} by:
$$
\E_r(\v)
=\frac12\int_M\vrs_r(\v)\volume(g),
$$
for all integers $r=1,\dots,m$, assuming for convenience that $M$ is compact.  Then $\E_r$ is non-negative, and its zeroes are precisely the mappings $\v$ with $\rank\v<r$ everywhere.  Critical points of $\E_r$ will be called \textsl{$r$-power harmonic maps,} or more briefly \textsl{$r$-harmonic maps.} 

\begin{rem}
\label{rem:volume}
Since $\vrs_m(\v)$ is the square of the volume density $v(\v)$, unless $\v$ is an isometric immersion $\E_m(\v)$ is not the volume of $\v$.  In particular, unlike the volume functional, $\E_m$ depends on both $g$ and $h$.  Nevertheless the two functionals have the same critical points; see for example Theorem \ref{thm:hpharmaps}.
\end{rem}

It has been known \textit{ab initio} \cite[p.\ 126]{ES} that when $m=2$ the energy functional is conformally invariant; that is, dependent only on the conformal structure of the domain.  Higher-power energies exhibit this in higher dimensions. 

\begin{prop}\label{prop:conf}
If $m=2r$ then $\E_r(\v)$ is conformally invariant.  The converse holds provided $\rank \v\geqs r$ somewhere.
\end{prop}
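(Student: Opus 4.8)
The plan is to track the behaviour of $\vrs_r(\v)$ and of the volume element under a conformal change $\tilde g=e^{2f}g$ of the domain metric, where $f$ is an arbitrary smooth function on $M$. Since $\v^*h$ depends only on $\v$ and on the fixed target metric $h$, the defining relation \eqref{eq:cg} forces the Cauchy--Green tensor to rescale as $\tilde\a=e^{-2f}\a$ pointwise. Because $\vrs_r$ is homogeneous of degree $r$, this gives $\vrs_r^{\tilde g}(\v)=e^{-2rf}\vrs_r^{g}(\v)$, and since $\volume(\tilde g)=e^{mf}\volume(g)$ one obtains
\[
\E_r^{\tilde g}(\v)=\tfrac12\int_M e^{(m-2r)f}\,\vrs_r^{g}(\v)\,\volume(g).
\]
When $m=2r$ the integrand is unchanged by the conformal change (indeed the $r$-th energy density is already conformally invariant at the level of densities), so $\E_r^{\tilde g}(\v)=\E_r(\v)$; this settles the first assertion.

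For the converse I would argue from the displayed identity. Suppose $\E_r(\v)$ is conformally invariant and $\rank d\v_{x_0}\geqs r$ for some $x_0\in M$. By \eqref{eq:extpower}, $\vrs_r(\v)=\norm{(d\v)^r}^2\geqs 0$, vanishing at a point precisely when the rank drops below $r$ there, so $\vrs_r(\v)_{x_0}>0$ and hence, by continuity, $\vrs_r(\v)>0$ on some open set $U\ni x_0$. Choose a smooth function $\psi$ on $M$ with $\psi\geqs 0$, $\psi\not\equiv 0$ and support contained in $U$, substitute $f=t\psi$, and use that (by hypothesis, and since $M$ is compact) $t\mapsto\tfrac12\int_M e^{(m-2r)t\psi}\,\vrs_r^{g}(\v)\,\volume(g)$ is a constant, differentiable function of $t$. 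Differentiating at $t=0$ gives $(m-2r)\int_M\psi\,\vrs_r^{g}(\v)\,\volume(g)=0$, and the integral is strictly positive by the choice of $\psi$ together with the positivity of $\vrs_r(\v)$ on $U$; therefore $m=2r$.

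I do not anticipate a serious obstacle. The two steps that need a little care are the scaling law $\tilde\a=e^{-2f}\a$ --- which rests on $\v^*h$ being independent of the domain metric --- and, in the converse, the localisation: the rank hypothesis and the continuity of $\vrs_r(\v)$ are exactly what is needed to manufacture a test function $\psi$ with $\int_M\psi\,\vrs_r(\v)\,\volume(g)>0$, and hence to extract the factor $m-2r$. The rank proviso is genuinely necessary: if, for instance, $\dim N<r$ then $\vrs_r(\v)\equiv 0$ for every $\v$, so $\E_r$ vanishes identically and is vacuously conformally invariant whatever the value of $m$.
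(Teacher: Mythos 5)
Your proposal is correct and follows essentially the same route as the paper: the scaling $\tilde\a=e^{-2f}\a$ together with homogeneity of $\vrs_r$ and the factor $e^{mf}$ in the volume element, giving the integrand $e^{(m-2r)f}\vrs_r(\v)$, which is invariant exactly when $m=2r$. Your converse, via a compactly supported test function and differentiation in $t$, is just a more explicit rendering of the paper's observation that $\vrs_r(\v)>0$ on an open set forces $m-2r=0$ once the difference of energies vanishes for all conformal factors.
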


\begin{proof}
For clarity in this context, we write $\vrs_r(\v)=\vrs_r(\v,g)$ and $\E_r(\v)=\E_r(\v,g)$.  If $\rho\colon M\to\R^+$ is a continuous  function with $\rho^2$ smooth, then by the homogeneity of the elementary invariants:
$$
\vrs_r(\v,\rho^2 g)=\rho^{-2r} \vrs_r(\v,g).
$$
Since the volume element transforms:
$$
\volume(\rho^2 g)=\rho^m \volume(g),
$$
it follows that:
\begin{equation}
\E_r(\v,\rho^2 g)-\E_r(\v,g)
=\frac12\int_M \bigl(\rho^{m-2r}-1\bigr) \vrs_r(\v,g) \volume(g),
\label{eq:conf}
\end{equation}
which vanishes if $m=2r$.  Conversely, if $\rank \v\geqs r$ somewhere then $\vrs_r(\v,g)>0$ on an open set, so if \eqref{eq:conf} vanishes for all $\rho$ then certainly $m-2r=0$.
\end{proof}

It is also well-known that when $m=2$ energy majorises area, with equality precisely for mappings that are weakly conformal \cite[p.\ 126]{ES}, \cite{L}.  Again, higher-power energies generalise this to higher dimensions.  

\begin{defn}\label{defn:rconf}
A mapping $\v$ is \textsl{$r$-conformal} if $\v$ is conformal on an open subset, away from which $\rank \v<r$; thus $1$-conformality is equivalent to weak conformality.
\end{defn}

\begin{prop}\label{prop:major}
If $m=2r$ then $\vrs_r(\v)\geqs \binom{m}{r}\/v(\v)$, with equality precisely when $\v$ is $r$-conformal.
\end{prop}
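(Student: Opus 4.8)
The plan is to work pointwise at an arbitrary $x\in M$ and analyse the self-adjoint endomorphism $\a=\a_x$ of the $m$-dimensional space $T_xM$ via its eigenvalues $\lam_1,\dots,\lam_m\geqs 0$ (nonnegative since $\a$ is metrically dual to the pullback metric $\v^*h$, which is positive semi-definite). In these terms $\vrs_r(\v)_x=\vrs_r(\a)=e_r(\lam_1,\dots,\lam_m)$, the $r$-th elementary symmetric polynomial, while $v(\v)_x=\sqrt{\det\a}=\sqrt{\lam_1\cdots\lam_m}$; with $m=2r$ this gives $v(\v)_x=(\lam_1\cdots\lam_m)^{1/2}$. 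So the asserted inequality reduces to the purely algebraic statement $e_r(\lam_1,\dots,\lam_{2r})\geqs\binom{2r}{r}\bigl(\prod_i\lam_i\bigr)^{1/2}$ for nonnegative reals, with characterisation of equality.

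First I would prove this algebraic inequality. The natural route is the AM--GM inequality applied to the $\binom{2r}{r}$ monomials $\lam_{i_1}\cdots\lam_{i_r}$ (over $r$-subsets $\{i_1<\dots<i_r\}\subseteq\{1,\dots,2r\}$) that make up $e_r$: their arithmetic mean is $e_r/\binom{2r}{r}$, and their geometric mean is the $\binom{2r}{r}$-th root of $\prod_{|S|=r}\prod_{i\in S}\lam_i$. The key combinatorial observation is that each index $i$ lies in exactly $\binom{2r-1}{r-1}$ of the $r$-subsets $S$, and $\binom{2r-1}{r-1}=\tfrac12\binom{2r}{r}$, so the product over all monomials is $\bigl(\prod_i\lam_i\bigr)^{\binom{2r}{r}/2}$, whose $\binom{2r}{r}$-th root is exactly $\bigl(\prod_i\lam_i\bigr)^{1/2}=v(\v)_x$. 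Hence $e_r/\binom{2r}{r}\geqs v(\v)_x$, which is the inequality. AM--GM equality holds iff all the monomials $\prod_{i\in S}\lam_i$ are equal. If some $\lam_i=0$ then $v(\v)_x=0$ and $\rank d\v_x<r$; the content is the case all $\lam_i>0$, where equality of all the monomials forces $\lam_1=\dots=\lam_{2r}$ (e.g.\ comparing two subsets differing in one index gives $\lam_i=\lam_j$), i.e.\ $\a_x$ is a positive multiple of the identity, which is exactly the condition that $\v$ is conformal at $x$.

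Then I would reassemble the pointwise statement into the global one, using Definition \ref{defn:rconf}: the inequality $\vrs_r(\v)\geqs\binom{m}{r}v(\v)$ holds at every point; equality everywhere forces that at each $x$ either $\rank d\v_x<r$ (the locus where $v(\v)=0$ and also $\vrs_r(\v)=0$ by \eqref{eq:extpower}) or $\a_x$ is a positive multiple of $1_{T_xM}$, i.e.\ $\v$ is conformal at $x$; and on the open set where $\rank d\v\geqs r$ the conformality factor is smooth, so $\v$ is conformal there. That is precisely $r$-conformality. Conversely an $r$-conformal $\v$ satisfies equality pointwise on both pieces, hence globally.

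The main obstacle is the equality-case analysis, specifically being careful that AM--GM equality (all $r$-fold products equal) really does give $\a_x$ proportional to the identity rather than something weaker, and handling the interaction between the zero-rank locus and the conformal locus so that the conclusion matches Definition \ref{defn:rconf} exactly (in particular that the conformal factor is continuous, with smooth square, on the relevant open set, matching the hypotheses in Proposition \ref{prop:conf}). The inequality direction itself is a one-line AM--GM once the combinatorial identity $\binom{2r-1}{r-1}=\tfrac12\binom{2r}{r}$ is noted.
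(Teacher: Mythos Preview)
Your proof is correct and takes a related but genuinely different route from the paper's. Where you apply AM--GM to all $\binom{2r}{r}$ monomials of $e_r$ at once, the paper expands the nonnegative sum
\[
\sum_{\mu\in S_m}\bigl(\rho_{\mu(1)}\cdots\rho_{\mu(r)}-\rho_{\mu(r+1)}\cdots\rho_{\mu(m)}\bigr)^2
\]
(with $\rho_i=\sqrt{\lam_i}$) and identifies it as a positive multiple of $\vrs_r(\v)-\binom{m}{r}v(\v)$; this is effectively pairwise AM--GM over complementary $r$-subsets $(S,S^c)$, packaged as a single sum-of-squares identity. Your version yields the slightly stronger equality condition that \emph{all} $r$-fold products coincide (not merely each with its complement), which streamlines the degenerate case: if one monomial vanishes then all do, forcing at most $r-1$ of the $\lam_i$ to be nonzero and hence $\rank d\v_x<r$. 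You elide this step---your sentence ``If some $\lam_i=0$ then \dots\ $\rank d\v_x<r$'' is false without the equality hypothesis, since a single vanishing eigenvalue gives rank $2r-1\geqs r$---but you rightly flag the equality analysis as the place needing care, and the missing line is exactly the one above. The paper's approach has the virtue of being entirely elementary (no named inequality invoked), while yours is conceptually tidier and makes the role of $m=2r$ transparent via the combinatorial identity $\binom{2r-1}{r-1}=\tfrac12\binom{2r}{r}$.
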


\begin{proof}
Let $\{e_i\}$ be a local $g$-orthonormal tangent frame diagonalising $\a$, and set $\rho_i=\norm{d\v(e_i)}$; the eigenvalues of $\a$ are therefore $\rho_i^{\;2}$.  Then:
\begin{align*}
0&\leqs
\ts\sum_{\mu\in S_m}
(\rho_{\mu(1)}\cdots \rho_{\mu(r)}
-\rho_{\mu(r+1)}\cdots \rho_{\mu(m)})^2 \\[1ex]
&=(r!)^2\bigl(\vrs_r(\v)-\ts\binom{m}{r}\rho_1\cdots\rho_m\bigr)
=(r!)^2\bigl(\vrs_r(\v)-\ts\binom{m}{r}\/v(\v)\bigr).
\end{align*}
If $\v$ is $r$-conformal then for all $x\in M$ either $\v^*h(x)=\rho(x)^2g(x)$, in which case $\rho_1(x)=\cdots=\rho_m(x)=\rho(x)$, or $\rank d\v_x<r$, in which case at least $r+1$ of the $\rho_i(x)$ vanish.  In either case:
\begin{equation}
\rho_{\mu(1)}(x)\cdots \rho_{\mu(r)}(x)
-\rho_{\mu(r+1)}(x)\cdots \rho_{\mu(m)}(x)=0,
\label{eq:equal}
\end{equation}
for all permutations $\mu\in S_m$, hence $\vrs_r(\v)=\binom{m}{r}\/v(\v)$.  Conversely, given the system \eqref{eq:equal}, if one of the $\rho_i(x)$ vanishes then so do at least $r$ others, whereas if no $\rho_i(x)$ vanishes then all are equal; thus $\v$ is $r$-conformal.
\end{proof}

\begin{cor}\label{cor:confmajor}
Suppose $m=2r$ and $\v\colon M\to (N,h)$ is a $r$-conformal map of a compact conformal manifold $M$.  If $\v$ is a local (resp.\ global) minimiser of volume then $\v$ is a local (resp. global) minimum of $\E_r$ with respect to any Riemannian metric $g$ in the conformal class of $M$; in particular, $\v$ is a $r$-harmonic map.  
\end{cor}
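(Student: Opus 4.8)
The plan is to combine Proposition \ref{prop:major} with the conformal invariance of $\E_r$ from Proposition \ref{prop:conf}. Fix a Riemannian metric $g$ in the conformal class of $M$. Since $m=2r$, Proposition \ref{prop:major} gives the pointwise inequality $\vrs_r(\v,g)\geqs\binom{m}{r}v(\v)$, and because $\v$ is $r$-conformal we in fact have equality: $\vrs_r(\v,g)=\binom{m}{r}v(\v)$ everywhere. Integrating over $M$ yields $\E_r(\v,g)=\tfrac12\binom{m}{r}\volume(\v)$, where $\volume(\v)=\int_M v(\v)\volume(g)$ is the volume of $\v$. So $\v$ realises equality in the majorisation, and the strategy is to show any nearby (or arbitrary) competitor $\psi$ has $\E_r$ no smaller.

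The key steps, in order: First I would let $\psi$ be any map sufficiently $\CC^1$-close to $\v$ (for the local statement) or any map at all (for the global statement), and apply Proposition \ref{prop:major} again — still using the \emph{same} metric $g$, still with $m=2r$ — to get $\vrs_r(\psi,g)\geqs\binom{m}{r}v(\psi)$ pointwise. Integrating, $\E_r(\psi,g)\geqs\tfrac12\binom{m}{r}\volume(\psi)$. Second, I would invoke the hypothesis that $\v$ is a local (resp.\ global) minimiser of volume: $\volume(\psi)\geqs\volume(\v)$ for all such $\psi$. Chaining the two inequalities with the equality for $\v$ gives
\[
\E_r(\psi,g)\;\geqs\;\tfrac12\textstyle\binom{m}{r}\volume(\psi)\;\geqs\;\tfrac12\textstyle\binom{m}{r}\volume(\v)\;=\;\E_r(\v,g),
\]
so $\v$ is a local (resp.\ global) minimum of $\E_r(\,\cdot\,,g)$. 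Third, since $\v$ is a minimum it is a fortiori a critical point of $\E_r(\,\cdot\,,g)$, i.e.\ a $r$-harmonic map. Finally, to upgrade from "a metric $g$" to "any metric in the conformal class", I would observe that by Proposition \ref{prop:conf} (the case $m=2r$), $\E_r(\,\cdot\,,\rho^2 g)=\E_r(\,\cdot\,,g)$ for every conformal factor $\rho$, so the minimising property and criticality are independent of the choice of representative metric.

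The main subtlety — rather than a deep obstacle — is bookkeeping about what "minimiser of volume" means for a map that need not be an immersion: $\volume(\v)$ must be read as $\int_M v(\v)\volume(g)$, the integral of the volume density of $\v^*h$, and the comparison $\volume(\psi)\geqs\volume(\v)$ is taken among maps in the relevant (local or global) class. One should also note that $v(\psi)$ and hence $\E_r(\psi,g)$ depend on $g$ only through its conformal class once $m=2r$, which is exactly what makes the statement well-posed over a conformal manifold; the volume functional itself is metric-dependent, but the chain of inequalities only ever compares values at a fixed $g$, so no incompatibility arises. Apart from these points the argument is a direct concatenation of the two preceding propositions, and no further computation is needed.
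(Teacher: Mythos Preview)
Your argument is correct and is precisely the intended one: the paper states the corollary without proof, as an immediate consequence of Propositions \ref{prop:conf} and \ref{prop:major}, and your chaining of the pointwise majorisation (with equality for the $r$-conformal $\v$) against the volume-minimising hypothesis is exactly that deduction. One small sharpening of your closing remark: the mapping volume $\int_M v(\psi)\volume(g)=\int_M\sqrt{\det(\psi^*h)}$ is in fact independent of $g$ altogether (not merely of its conformal class), so no incompatibility could have arisen in any case.
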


\begin{eg}\label{eg:calibrated}
Let $\v\colon M\to (N,h)$ be a calibrated immersion of a compact $2r$-manifold into a calibrated Riemannian manifold.  Then $\v$ is a minimum of $\E_r$ with respect to any metric on $M$ conformal to $\v^*h$.  
\end{eg}

The Newton tensors of $\v$ may be used to define \textsl{higher-power tension fields:}

\begin{defn}\label{defn:rtension}
The \textsl{$r$-th tension field} of $\v$ is the following section of $\v\inv TN$:
$$
\tau_r(\v)
=\trace\nabla(d\v\circ\nu_{r-1}(\v))
=\textstyle\sum_i\nab{e_i} (d\v\circ\nu_{r-1}(\v))(e_i),
$$
for all $r=1,\dots,m$.
\end{defn}

\begin{rem}
Since $\nu_0(\v)$ is the identity we have: 
$$
\tau_1(\v)=\trace\nabla d\v=\tau(\v),
$$ 
the usual tension field of harmonic map theory \cite{ES}.  
\end{rem}

The following result is a consequence of Theorem \ref{thm:firstvar1} below (see Example \ref{eg:srhamap}).

\begin{thm}\label{thm:rhamap}
A smooth mapping $\v$ of Riemannian manifolds is a $r$-harmonic map if and only if $\tau_r(\v)=0$.
\end{thm}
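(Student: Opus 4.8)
The plan is to obtain Theorem~\ref{thm:rhamap} as the special case of the section-theoretic first variation formula (Theorem~\ref{thm:firstvar1}) in which the submersion $\pi$ is taken to be the trivial submersion $\pi\colon M\times N\to M$, so that sections $\s\colon M\to M\times N$ correspond bijectively to mappings $\v\colon M\to N$ via $\s(x)=(x,\v(x))$. Under this identification, variations of $\s$ through sections correspond exactly to arbitrary smooth variations of $\v$, with no constraint, so the Euler--Lagrange equations for $\E_r$ on sections coincide with those for $\E_r$ on mappings. First I would spell out this correspondence carefully: the vertical distribution of $\pi$ is the pullback of $TN$, the vertical part of $d\s$ is precisely $d\v$ (after the natural identifications), and hence the Cauchy--Green tensor $\a$ of $\s$ in the sense of \S\ref{sec:hp} agrees with the tensor $\a=g\inv\v^*h$ of $\v$; consequently $\vrs_r(\s)=\vrs_r(\v)$ and the $r$-th vertical energy of $\s$ equals $\E_r(\v)$.

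Next I would check that in this trivial setting the extra terms appearing in the general first variation formula degenerate appropriately. The point is that the asymmetry of the second fundamental form in Theorem~\ref{thm:firstvar1} is governed by the curvature of $\pi$ (Definition~\ref{defn:curvform}); for the trivial product submersion equipped with the product connection this curvature vanishes, so the second fundamental form is symmetric and reduces to the ordinary $\nabla d\v$. Then the vertical tension-type operator produced by Theorem~\ref{thm:firstvar1} becomes $\trace\nabla\bigl(d\v\circ\nu_{r-1}(\v)\bigr)=\tau_r(\v)$ exactly as in Definition~\ref{defn:rtension}, using Lemma~\ref{lem:eldiff} to differentiate $\vrs_r$ along the variation and integration by parts to move the covariant derivative off the variation field. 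Since the variation field ranges over all sections of $\v\inv TN$ and these are exactly the admissible variation fields of $\s$, vanishing of the first variation for all of them is equivalent to $\tau_r(\v)=0$.

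The main obstacle I expect is bookkeeping rather than anything conceptually deep: one must verify that the identifications (vertical bundle $\cong\v\inv TN$, vertical derivative $\cong d\v$, induced connection on the vertical bundle $\cong$ pullback connection, product curvature $=0$) are the ones implicitly used in the statement of Theorem~\ref{thm:firstvar1}, and that the resulting specialised formula really is the naive first variation of $\E_r(\v)$ obtained by differentiating $\tfrac12\int_M\vrs_r(\v)\volume(g)$ directly. A clean alternative, which I would mention, is to bypass the submersion machinery entirely and give the direct computation: differentiate $\vrs_r(\v_t)$ using Lemma~\ref{lem:eldiff} with $\b_t$ the derivative of $\a_t$, express $\b_t$ in terms of $\nabla(d\v_t)$ and the variation field $v$, and integrate by parts, the self-adjointness of $\nu_{r-1}(\v)$ (Lemma~\ref{lem:char}) ensuring that the Newton tensor passes through the inner product; one then reads off that the Euler--Lagrange operator is precisely $\tau_r(\v)$. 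Either route gives the equivalence; the forward reference to Theorem~\ref{thm:firstvar1} simply lets us cite the harder computation once rather than twice.
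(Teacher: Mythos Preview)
Your proposal is correct and follows essentially the same route as the paper: the paper obtains Theorem~\ref{thm:rhamap} by specialising Theorem~\ref{thm:firstvar1} to the Riemannian product $(P,k)=(M\times N,g\times h)$ with $\s$ the graph of $\v$, observing that $\pi$ is flat so the curvature term vanishes and the first variation reduces to $-\int_M h(\tau_r(\v),v)\volume(g)$ (Example~\ref{eg:srhamap}). Your careful verification of the identifications (vertical bundle $\cong\v\inv TN$, $d^v\s\cong d\v$, flatness of the product) is exactly the bookkeeping the paper leaves implicit, and your alternative direct computation is a fine self-contained substitute.
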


We now generalise the situation as follows.  Let $\pi\colon(P,k)\to(M,g)$ be a smooth submersion of Riemannian manifolds, with $\dim P=p$.  The tangent bundle of $P$ then splits as an orthogonal direct sum:
$$
TP=\VV\oplus\HH, 
$$
where $\VV=\ker d\pi$, and we refer (as usual) to $\VV$ (resp.\ $\HH$) as the \textsl{vertical} (resp.\ \textsl{horizontal}) distribution.  Thus, every tangent vector $A\in TP$ splits:
\begin{equation}
A=A^v+A^z,
\label{eq:verthoriz}
\end{equation}
where $A^v\in\VV$ and $A^z\in\HH$.  

\par
Suppose $\s$ is a smooth section of $\pi$.  The \textsl{vertical derivative} of $\s$ is:
\begin{equation}
d^v\s(X)=(d\s(X))^v,
\label{eq:vertderiv}
\end{equation}
for all $X\in \fX(TM)$; thus $d^v\s$ may be regarded as a section of $T^*M\otimes\s\inv\VV$.  The horizontal derivative $d^z\s$ is defined similarly.  The \textsl{vertical energy} of $\s$ is then defined:
$$
\E^v(\s)=\frac12\int_M \lVert d^v\s\rVert^2\volume(g).
$$ 
The zeroes of $\E^v$ are precisely the horizontal sections, and stationary points of $\E^v$ with respect to variations through sections are often referred to as  \textsl{harmonic sections;} \cite{W2} \textit{et seq.}   Taking $\a$ now to be the
\textsl{vertical Cauchy-Green tensor} of $\s$:
\begin{equation}
g(\a(X),Y)=k(d^v\s(X),d^v\s(Y)),
\label{eq:vcg}
\end{equation}
and denoting the elementary invariants of $\a$ by $\vrs_r^v(\s)\colon M\to\R$, we define the \textsl{$r$-th vertical energy} of $\s$ by:
\begin{equation}
\Evr(\s)=\frac12\int_M\vrs_r^v(\s)\volume(g),
\label{eq:rvertenergy}
\end{equation}
for all $r=1,\dots,m$.  

\begin{defn}\label{defn:rhoriz}
A section is \textsl{$r$-horizontal} if its vertical derivative has rank everywhere less than $r$
\end{defn}

The zeroes of $\Evr$ are precisely the $r$-horizontal sections; some examples are given in Theorem \ref{thm:3dzero}. It follows that $\Evr$ is trivial for all $r>p-m$, the dimension of the fibres of $\pi$; however since $r\leqs m$ this arises only if $p<2m$ (see also Corollary \ref{cor:hphasecsb}).  Critical points of $\Evr$ with respect to variations through sections will be called \textsl{$r$-power harmonic sections,} or more briefly \textsl{$r$-harmonic sections.}  

\par
The Newton endomorphisms of $\a$ again define a family self-adjoint $(1,1)$-tensors on $M$, which we call the \textsl{vertical Newton tensors} of $\s$ and denote by $\nu_r^v(\s)$.  The \textsl{higher-power vertical tension fields} of $\s$ are then defined:

\begin{defn}\label{defn:rvtension}
The \textsl{$r$-th vertical tension field} of $\s$ is the following section of $\s\inv\VV$:
$$
\tvr(\s)
=\trace\nabla^v(d^v\s\circ\nu_{r-1}^v(\s))
=\ts\sum_i\nabv{e_i}(d^v\s\circ\nu_{r-1}^v(\s))(e_i),
$$
where $\nabla^v$ is the pullback of the linear connection in the vector bundle $\VV\to P$ obtained by orthogonally projecting the Levi-Civita connection of $(P,k)$.  
\end{defn}

\begin{rem}
Since $\nu_0^v(\s)$ is the identity we have: 
$$
\tau^v_1(\s)=\trace\nabla^v d^v\s=\tau^v(\s),
$$ 
the \textsl{vertical tension field} \cite{W3}, whose vanishing characterises harmonic sections in most commonly encountered situations (Theorem \ref{thm:firstvar2} describes the general case).
\end{rem}  

\begin{defns}\label{defn:twistrace}
If $Q$ is a 2-tensor on $M$, possibly vector bundle-valued, and $T$ is a self-adjoint $(1,1)$-tensor, then the \textsl{$T$-twisted trace} of $Q$ is:
$$
\tr{T}\,Q=\ts\sum_i Q(e_i,Te_i)=\ts\sum_i Q(Te_i,e_i).
$$
The \textsl{divergence} of $T$ is the following vector field:
$$
\diverge T=\trace\nabla T
=\ts\sum_i \nab{e_i}T(e_i).
$$
\end{defns}

An elementary calculation allows higher-power tension fields to be expressed as a twisted trace, at the expense of the divergence of the Newton tensor.  

\begin{thm}\label{thm:vsrtension}
For all $r=1,\dots,m$:
$$
\tvr(\s)=\tr{\newt} \nabla^v d^v\s+d^v\s(\diverge \newt),
$$
where $\newt=\nu_{r-1}^v(\s)$.  
\end{thm}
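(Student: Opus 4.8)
The plan is to unwind the definition $\tvr(\s)=\ts\sum_i\nabv{e_i}(d^v\s\circ\nu_{r-1}^v(\s))(e_i)$ by a double application of the Leibniz rule, and then to recognise the two resulting sums as the twisted trace and the divergence term appearing on the right-hand side. Write $\newt=\nu_{r-1}^v(\s)$, a self-adjoint $(1,1)$-tensor on $M$; then $d^v\s\circ\newt$ is the $\s\inv\VV$-valued $1$-form $X\mapsto d^v\s(\newt X)$, and the connection in Definition~\ref{defn:rvtension} is the tensor-product connection on $T^*M\otimes\s\inv\VV$ induced by the Levi-Civita connection of $(M,g)$ together with the pulled-back vertical connection $\nabla^v$.

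First I would compute, for arbitrary $X,Y\in\fX(M)$, the expression $(\nabv{X}(d^v\s\circ\newt))(Y)$. Differentiating the $1$-form $d^v\s\circ\newt$ and then expanding the derivative of the $\VV$-valued term $d^v\s(\newt Y)$ --- using in the second step $\nab{X}(\newt Y)=(\nab{X}\newt)Y+\newt(\nab{X}Y)$ --- produces three terms, of which the two containing $\nab{X}Y$ cancel, as they must since the expression is tensorial in both $X$ and $Y$. What is left is $(\nabv{X}d^v\s)(\newt Y)+d^v\s\bigl((\nab{X}\newt)Y\bigr)$; taking $\{e_i\}$ geodesic at the point of evaluation renders this cancellation immediate.

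Setting $X=Y=e_i$ and summing over a local orthonormal frame then gives $\tvr(\s)=\ts\sum_i(\nabv{e_i}d^v\s)(\newt e_i)+\ts\sum_i d^v\s\bigl((\nab{e_i}\newt)e_i\bigr)$. The first sum is $\ts\sum_i(\nabla^v d^v\s)(e_i,\newt e_i)$, which is by definition the $\newt$-twisted trace $\tr{\newt}\nabla^v d^v\s$ (Definitions~\ref{defn:twistrace}); here the self-adjointness of $\newt$ is what makes this twisted trace well defined, since $\nabla^v d^v\s$ is in general not symmetric. The second sum is $d^v\s\bigl(\ts\sum_i(\nab{e_i}\newt)e_i\bigr)=d^v\s(\diverge\newt)$ by the definition of the divergence of a $(1,1)$-tensor. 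Combining the two yields the stated formula. There is no real obstacle here: the computation is routine, and the only point requiring care is keeping track of which induced connection is acting at each stage; it is worth remarking that the identity uses nothing about $\newt$ beyond its being a $(1,1)$-tensor, self-adjointness being invoked purely to align with the notation for the twisted trace.
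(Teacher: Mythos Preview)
Your proof is correct and is precisely the ``elementary calculation'' the paper alludes to but does not spell out; the paper gives no explicit proof of this theorem, merely stating that the identity follows by an elementary calculation, and your argument---Leibniz rule applied to $d^v\s\circ\newt$, then to $\newt Y$, with the $\nab{X}Y$ terms cancelling---is exactly that calculation. Your closing remark that only the $(1,1)$-tensor property of $\newt$ is used, with self-adjointness needed only to match Definition~\ref{defn:twistrace}, is also accurate and worth retaining.
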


Applying Theorem \ref{thm:vsrtension} to the graph of a mapping yields:

\begin{cor}\label{cor:srtension}
If $\v$ is a smooth map then for all $r=1,\dots,m$:
\begin{equation*}
\tau_r(\v)=\tr{\newt}\nabla d\v+d\v(\diverge \newt),
\label{eq:srtensiontwist}
\end{equation*}
where $\newt=\nu_{r-1}(\v)$.  
\end{cor}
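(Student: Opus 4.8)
The plan is to realise $\v$ as the graph section of a product submersion and then quote Theorem~\ref{thm:vsrtension}. Take $P=M\times N$ with the product metric $k=g\times h$, and let $\pi\colon P\to M$ be projection onto the first factor; this is a submersion (indeed a Riemannian submersion with totally geodesic fibres $\{x\}\times N$), and a smooth map $\v\colon M\to N$ corresponds to the section $\s_\v\colon M\to P$, $\s_\v(x)=(x,\v(x))$, every section of $\pi$ being of this form. I would then show that all of the vertical data of $\s_\v$ reduce to the standard data of $\v$, and substitute into Theorem~\ref{thm:vsrtension}.

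First I would identify the bundles. Since $k$ is a product metric, the vertical distribution $\VV=\ker d\pi$ is canonically the pullback of $TN$ (and $\HH$ the pullback of $TM$), so $\s_\v\inv\VV\cong\v\inv TN$; under this identification $d\s_\v(X)=(X,d\v(X))$ splits with $d^v\s_\v(X)=d\v(X)$, i.e.\ $d^v\s_\v$ \emph{is} $d\v$. Comparing \eqref{eq:vcg} with \eqref{eq:cg}, the vertical Cauchy--Green tensor of $\s_\v$ is then exactly the Cauchy--Green tensor $\a=g\inv\v^*h$ of $\v$; hence $\vrs_r^v(\s_\v)=\vrs_r(\v)$ and $\nu_r^v(\s_\v)=\nu_r(\v)$ for every $r$.

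Next I would match the connections. Because the Levi--Civita connection of a Riemannian product is the direct sum of the Levi--Civita connections of the factors, orthogonal projection onto $\VV$ of the Levi--Civita connection of $(P,k)$ returns the pullback of the Levi--Civita connection of $(N,h)$; pulled back along $\s_\v$ this is precisely the pullback connection in $\v\inv TN$. Therefore $\nabla^v d^v\s_\v=\nabla d\v$, the second fundamental form of $\v$, and comparing Definitions~\ref{defn:rtension} and~\ref{defn:rvtension} gives $\tvr(\s_\v)=\tau_r(\v)$. Feeding $d^v\s_\v\rightsquigarrow d\v$, $\nabla^v d^v\s_\v\rightsquigarrow\nabla d\v$, $\nu_{r-1}^v(\s_\v)\rightsquigarrow\nu_{r-1}(\v)$ and $\tvr(\s_\v)\rightsquigarrow\tau_r(\v)$ into the conclusion of Theorem~\ref{thm:vsrtension} yields exactly the stated formula.

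I expect the only point requiring genuine care to be this connection identification, namely that the $\VV$-component of the product Levi--Civita connection really is the pullback of $\nabla$ on $N$ (equivalently, that the fibres of $\pi$ are totally geodesic and $\HH$ is parallel along them); everything else is bookkeeping. As a cross-check, the corollary also admits a self-contained proof directly from Definition~\ref{defn:rtension}: writing $\newt=\nu_{r-1}(\v)$ and applying the Leibniz rule one finds $\nabla_X\bigl(d\v(\newt Y)\bigr)=(\nabla d\v)(X,\newt Y)+d\v\bigl((\nabla_X\newt)Y\bigr)+d\v\bigl(\newt(\nabla_X Y)\bigr)$, so in the covariant derivative of $d\v\circ\newt$ the terms $d\v\bigl(\newt(\nabla_X Y)\bigr)$ cancel, and tracing over a local orthonormal frame (using self-adjointness of $\newt$) leaves $\tr{\newt}\nabla d\v+d\v(\diverge\newt)$.
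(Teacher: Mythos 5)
Your proposal is correct and takes essentially the same route as the paper, which obtains the corollary precisely by applying Theorem~\ref{thm:vsrtension} to the graph section of the product submersion $M\times N\to M$; your identifications of the vertical data (vertical derivative, vertical Cauchy--Green tensor, vertical Newton tensors, and the projected connection) with the corresponding data of $\v$ are exactly the bookkeeping that makes this legitimate. Your closing Leibniz-rule computation is in effect the ``elementary calculation'' behind Theorem~\ref{thm:vsrtension} itself, so nothing is missing.
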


\begin{rems}
\label{rem:reilly}
\item{}
\begin{itemize}[leftmargin=1.4em]
\item[1)]
The Newton tensors of maps/sections are typically not solenoidal; see for example Theorem \ref{thm:newton} and Lemma \ref{lem:vnewt2}.

\item[2)]
If $\v$ is a totally geodesic map then $d\v$ has constant rank \cite{V} so $K(\v)=\ker d\v$ is a vector subbundle of $TM$.  Then $\v$ is a $r$-harmonic map precisely when $\diverge\newt$ is a section of $K(\v)$, where $\newt=\nu_{r-1}(\v)$; thus $\diverge\newt$ is the obstruction modulo $K(\v)$ for $\v$ to be $r$-harmonic.  In particular, a harmonic map need not necessarily be $r$-harmonic for $r>1$.
\end{itemize}
\end{rems}

\section{Higher-power harmonic sections, and curvature}
\label{sec:firstvar}
 
Let $\vartheta^v\colon TP\to\VV$ and $\vartheta^z\colon TP\to\HH$ denote the orthogonal projection morphisms; thus for all $A\in TP$:
$$
\vartheta^v(A)=A^v,\qquad
\vartheta^z(A)=A^z.
$$  
An exterior derivative $d^v$ and coderivative $\d^v$ on the space of $\VV$-valued differential forms on $P$ are obtained from the connection $\nabla^v$ in the bundle $\VV\to P$, in the usual way.  

\par
Now let $\s_t$ be a smooth 1-parameter variation of $\s$ through sections, defined for all $t$ in some open interval $I$ about $0\in\R$, with $\s_0=\s$ and associated homotopy: 
$$
\Sigma\colon M\times I\to N;\,\Sigma(x,t)=\s_t(x).
$$  
Let $V_t$ be the variation field: 
$$
V_t(x)=\frac{\partial\Sigma}{\partial t}\Bigr|_{(x,t)}
=d\/\Sigma(\del_t(x,t)), 
$$
where $\del_t$ is the unit vector field on $M\times I$ in the positive $\R$-direction.  Thus $V_t$ is a section of the pullback bundle $\s_t^{\;-1}\VV\to M$; in particular, $V=V_0$ is a section of $\s\inv\VV$.  The \textsl{vertical second fundamental form} of $\Sigma$ is defined
$$
\nabla^v\/d^v\Sigma(E,F)=\nabv E(d^v\Sigma(F))-d^v\Sigma(\nab EF),
$$
for all $E,F\in\fX(M\times I)$, where $d^v\Sigma$ is the vertical derivative; cf.\ equation \eqref{eq:vertderiv}.  This is a $\Sigma\inv\VV$-valued $2$-tensor on $M\times I$, but unlike the second fundamental form of a mapping is typically not symmetric; its asymmetry is measured by the exterior derivative: 
\begin{align}
d^v d^v\Sigma(E,F)
&=\nabla^v d^v\Sigma(E,F)-\nabla^v d^v\Sigma(F,E).
\label{eq:dvdv}
\end{align} 
If $\vartheta^v$ is viewed as a $\VV$-valued 1-form on $P$ then:
$$
d^v\Sigma=\vartheta^v\circ d\/\Sigma=\Sigma^*\vartheta^v,
$$
hence:
\begin{equation}
d^v d^v\Sigma=\Sigma^* d^v\vartheta^v. 
\label{eq:1}
\end{equation}

\begin{defn}\label{defn:curvform}
The \textsl{curvature form} of $\pi$ is the $\VV$-valued $2$-form $\Theta=d^v\vartheta^v$ on $P$, whose pullback to a $\s\inv\VV$-valued $2$-form $\s^*\Theta$ on $M$ is the \textsl{curvature} of $\s$.  The submersion $\pi$ is \textsl{flat} if $\Theta=0$, and the section $\s$ is \textsl{flat} if $\s^*\Theta=0$.   
It will sometimes be convenient to convert the curvature form $\Theta$ into a $3$-tensor $\theta$ on $P$, as follows:
\begin{equation}
\theta(A,B)C=k(A,\Theta(B,C)),
\label{eq:curvtensor}
\end{equation}
for all $A,B,C\in\fX(P)$.
\end{defn} 

\begin{rems}
\label{rems:curvform}
\item{}
\begin{itemize}[leftmargin=1.4em]
\item[1)]
If $\pi$ is flat then clearly so are all its sections; for an interesting converse see
Theorem \ref{thm:3dflat}.

\item[2)]
Definition \ref{defn:curvform} agrees with standard terminology in the familiar setting of a vector bundle with linear connection  (Remarks \ref{rems:can}\,(2) and \ref{rem:flat}).

\item[3)]
The canonical example of a flat submersion is a Riemannian product, projected onto either factor.  

\item[4)]
If $\pi$ is a Riemannian submersion then $\pi$ is flat if and only if $\pi$ is totally geodesic (Proposition \ref{prop:curvformgeom} and Lemma \ref{lem:fundform}).

\item[5)]
Evaluating equation \eqref{eq:1} at $t=0$ shows that a section $\s$ is flat precisely when its vertical second fundamental form $\nabla^v d^v\s$ is symmetric; see also Remarks \ref{rems:vbvsff}\,(2).  Thus flatness of sections generalises horizontality.

\item[6)]
Further ramifications of flatness, both of a submersion and its sections, appear in Theorems \ref{thm:firstvar1} and \ref{thm:flatharm}. 
\end{itemize}
\end{rems}

For any vector field $X$ on $M$ let $\bar X$ denote the natural extension to $M\times I$: 
$$
\bar X(x,t)=di_t(X(x)),
$$
where $i_t$ is the inclusion: 
$$
i_t\colon M\to M\times I;\,x\mapsto(x,t).
$$  
Then:
\begin{equation}
\nab{\del_t}\bar X=0=\nab{\bar{X}}\del_t.
\label{eq:2}
\end{equation}
The following result is an interim but nonetheless useful expression for the first variation of higher-power vertical energy.

\begin{thm}\label{thm:firstvar1}
For all integers $r=1,\dots,m$:
$$
\frac{d}{dt}\Big|_{t=0}\,\Evr(\s_t)
=-\int_M \bigl(k(\tvr(\s),V)
+\tr{\newt}\,\theta(d\s,d\s)V\bigr)\volume(g),
$$
where $\newt=\nu_{r-1}^v(\s)$ and $\theta$ is the curvature $3$-tensor defined in \eqref{eq:curvtensor}.  Thus if $\pi$ is flat then $\s$ is a $r$-harmonic section precisely when $\tvr(\s)=0$.
\end{thm}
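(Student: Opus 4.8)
The plan is to differentiate the functional \eqref{eq:rvertenergy} under the integral sign (legitimate, since $\Sigma$ is smooth and $M$ is compact) and apply Lemma~\ref{lem:eldiff} to the vertical Cauchy--Green tensor $\a_t$ of $\s_t$: writing $\b=\tfrac{d}{dt}\big|_0\a_t$ and $\newt=\nu_{r-1}^v(\s)$, this gives $\tfrac{d}{dt}\big|_0\vrs_r^v(\s_t)=\trace(\b\circ\newt)$, so everything reduces to identifying $\b$. To compute $\b$ I would differentiate the defining identity \eqref{eq:vcg} along the homotopy; since $\nabla^v$ is compatible with $k$ on $\VV$, this reduces to evaluating $\nabv{\del_t}\bigl(d^v\Sigma(\bar X)\bigr)$. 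Here the curvature form makes its entrance: because $d^v\Sigma(\del_t)=V_t$ (as $V_t$ is vertical) and $\nab{\del_t}\bar X=0=\nab{\bar X}\del_t$ by \eqref{eq:2}, combining these with the skew part \eqref{eq:dvdv} and identity \eqref{eq:1} yields
$$\nabv{\del_t}\bigl(d^v\Sigma(\bar X)\bigr)\Big|_{t=0}=\nabv X V+\Theta(V,d\s(X)).$$
Feeding this back produces a formula for $g(\b(X),Y)$ as a sum of terms in $\nabla^v V$, $d^v\s$, $d\s$ and $\Theta$.

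Next I would contract against a local orthonormal frame $\{e_i\}$. Using the self-adjointness of $\newt$, the symmetry of its frame matrix, and the antisymmetry of the $\VV$-valued $2$-form $\Theta$, the four resulting terms collapse (in any frame, with no need to diagonalise $\a$) to
$$\trace(\b\circ\newt)=2\ts\sum_i k\bigl(\nabv{e_i}V+\Theta(V,d\s(e_i)),\,(d^v\s\circ\newt)(e_i)\bigr),$$
so that $\tfrac{d}{dt}\big|_0\Evr(\s_t)=\int_M\ts\sum_i k\bigl(\nabv{e_i}V+\Theta(V,d\s(e_i)),(d^v\s\circ\newt)(e_i)\bigr)\,\volume(g)$.

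It remains to integrate by parts on the first term and to recognise the second. For the first, I would introduce the vector field $W$ on $M$ dual to the $1$-form $X\mapsto k\bigl(V,(d^v\s\circ\newt)(X)\bigr)$; at a point where $\nab{e_i}e_j$ vanishes one computes $\diverge W=\ts\sum_i k(\nabv{e_i}V,(d^v\s\circ\newt)(e_i))+k(V,\tvr(\s))$, the last summand arising because $\trace\nabla^v(d^v\s\circ\newt)=\tvr(\s)$ by Definition~\ref{defn:rvtension}. Since $M$ is compact, $\int_M\diverge W=0$, so the first term contributes $-\int_M k(\tvr(\s),V)\,\volume(g)$. For the second, unwinding the definition \eqref{eq:curvtensor} of $\theta$ and again using antisymmetry of $\Theta$ and symmetry of the frame matrix of $\newt$ gives $\ts\sum_i k(\Theta(V,d\s(e_i)),(d^v\s\circ\newt)(e_i))=-\tr{\newt}\,\theta(d\s,d\s)V$. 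Assembling the two contributions yields the displayed first-variation formula. Finally, when $\pi$ is flat we have $\Theta=0$, so the first variation is simply $-\int_M k(\tvr(\s),V)\,\volume(g)$; since every section $V$ of $\s\inv\VV$ is the variation field of some variation through sections (e.g.\ $\s_t(x)=\exp_{\s(x)}(tV(x))$ along the fibre), the fundamental lemma of the calculus of variations gives that $\s$ is $r$-harmonic precisely when $\tvr(\s)=0$.

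The main obstacle is the bookkeeping in the first two paragraphs: extracting the curvature form with the correct sign and argument-ordering from $d^vd^v\Sigma=\Sigma^*\Theta$, and then checking that the four terms of $\trace(\b\circ\newt)$ genuinely collapse to twice a single contraction, which uses self-adjointness of the Newton tensor and antisymmetry of $\Theta$ in an essential way. By contrast the integration by parts and the concluding variational argument are routine.
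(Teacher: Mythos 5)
Your proposal is correct and follows essentially the same route as the paper: reduce via Lemma \ref{lem:eldiff} to $\trace(\b\circ\newt)$, identify $\b$ by differentiating the vertical Cauchy--Green tensor and swapping arguments with \eqref{eq:dvdv}, \eqref{eq:1}, \eqref{eq:2} to bring in $\Theta$, and then integrate by parts to produce $\tvr(\s)$. The paper phrases the integration by parts via the coderivative $\d^v$ and Stokes' theorem rather than an explicit divergence, but this is only a cosmetic difference.
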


\begin{proof}
We first compute the variation tensor $\b_t$ of $\a_t$, as follows:  
\begin{align*}
g\bigl(\b_t(X),Y\bigr)
&=\del_t.k\bigl(d^v\Sigma(\bar X),d^v\Sigma(\bar Y)\bigr)  \\[1ex]
&=k\bigr(\nabla^v d^v\Sigma(\del_t,\bar X),d^v \s_t(Y)\bigr)
+k\bigl(d^v \s_t(X),\nabla^v d^v\Sigma(\del_t,\bar Y)\bigr),
\quad\text{by \eqref{eq:2}} \\[1ex]
&=k\bigr(\nabla^v d^v\Sigma(\bar X,\del_t)
+d^v d^v\Sigma(\del_t,\bar X),
\,d^v \s_t(Y)\bigr) \\[0.5ex]
&\qquad\quad
+k\bigl(d^v \s_t(X),\nabla^v d^v\Sigma(\bar Y,\del_t)
+d^v d^v\Sigma(\del_t,\bar Y)\bigr),
\quad\text{by \eqref{eq:dvdv}} \\[1ex]
&=k\bigl(\nabv X V_t+\Theta(V_t,d\s_t(X)),d^v\s_t(Y)\bigr) \\[0.5ex]
&\qquad\quad
+k\bigl(d^v\s_t(X),\nabv Y V_t+\Theta(V_t,d\s_t(Y))\bigr),
\quad\text{by \eqref{eq:1} and \eqref{eq:2}.}
\end{align*}
Then by Lemma \ref{lem:eldiff}:
\begin{align*}
\frac{d}{dt}\Big|_{t=0} \vrs_r^v(\s_t)
&=\trace(\b\circ \newt) 
=\ts\sum_i g\bigl(\b(\newt e_i),e_i\bigr) 
\\[1ex]
&=\ts\sum_i k\bigl(\nabv{e_i}V
+\Theta(V,d\s(\newt e_i)),\,d^v \s(e_i)\bigr) \\[0.5ex]
&\qquad\quad
+\ts\sum_i k\bigl(d^v\s(\newt e_i),\nabv{e_i} V
+\Theta(V,d\s(e_i))\bigr) \\[1ex]
&=2k\bigl(d^v V,d^v \s\circ \newt\bigr)
-2\tr{\newt}\,\theta(d\s,d\s)V,
\quad\text{by \eqref{eq:curvtensor}.}
\end{align*}   
Finally, by Stokes' Theorem:
\begin{align*}
\int_M k(d^v V,d^v\s\circ \newt)\volume(g)
&=\int_M k\bigl(V,\d^v (d^v \s\circ \newt)\bigr)\volume(g) \\
&=-\int_M k(V,\tvr(\s))\volume(g),
\end{align*}
by Definition \ref{defn:rvtension} of higher-power vertical tension.
\end{proof}

\begin{eg}\label{eg:srhamap}
The Euler-Lagrange equations for higher-power harmonic maps $\v\colon(M,g)\to(N,h)$ (Theorem \ref{thm:rhamap}) may be obtained by applying Theorem \ref{thm:firstvar1} when $(P,k)$ is the Riemannian product $(M\times N,g\times h)$ and $\s$ is the graph of $\v$.  Since $\pi$ is flat the curvature term drops out, leaving the first variation in divergence form:
\begin{equation}
\frac{d}{dt}\Big|_{t=0}\,\E_r(\v_t)
=-\int_M h(\tau_r(\v),v)\volume(g),
\label{eq:firstvarer}
\end{equation}
where $v$ is the variation field of $\v_t$ at $t=0$.
\end{eg}

We now establish the basic geometric properties of the curvature form $\Theta$.  Let $\A$ denote the collective shape operator for the fibres of $\pi$:  
\begin{equation}
\shape HV=-\vartheta^v(\nab VH),
\label{eq:shape}
\end{equation}
for all vertical (resp.\ horizontal) vector fields $V$ (resp.\ $H$) on $P$.  The following result generalises (with a twist) a well-known characterisation of the second fundamental form of a Riemannian submersion (cf.\ Lemma \ref{lem:fundform}).

\begin{prop}\label{prop:curvformgeom}
If $V,W$ (resp.\ $H,K$) are vertical (resp.\ horizontal) vector fields  on $P$ then: 

\begin{itemize}[leftmargin=1.75em, itemsep=1ex]
\item[\rm i)]
$\Theta(V,W)=0${\rm;}

\item[\rm ii)]
$\Theta(V,H)=-\shape HV${\rm;} 

\item[\rm iii)]
$\Theta(H,K)=-\vartheta^v[H,K]$. 
\end{itemize}

\smallskip\noindent
Thus $\pi$ is flat precisely when $\pi$ has t.g.\ fibres and integrable horizontal distribution.
\end{prop}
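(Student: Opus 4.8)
The plan is to expand $\Theta=d^v\vartheta^v$ using the usual formula for the exterior covariant derivative of a bundle-valued $1$-form, and then to split the arguments according to the decomposition $TP=\VV\oplus\HH$. Writing $\nabla$ for the Levi-Civita connection of $(P,k)$, so that $\nabv{A}W=\vartheta^v(\nab{A}W)$ for any section $W$ of $\VV\to P$, one has
$$
\Theta(A,B)=\nabv{A}(\vartheta^v(B))-\nabv{B}(\vartheta^v(A))-\vartheta^v([A,B])
$$
for all $A,B\in\fX(P)$; cf.\ \eqref{eq:dvdv}. Since $\Theta$ and each right-hand side in (i)--(iii) are tensorial and $\R$-bilinear, it suffices to evaluate $\Theta$ on pairs of vertical and/or horizontal fields, and this tensoriality reduction is the first thing I would record.

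For (i) I would use that $\vartheta^v$ restricts to the identity on $\VV$, which collapses $\Theta(V,W)$ to $\vartheta^v$ applied to $\nab{V}W-\nab{W}V-[V,W]$, vanishing by torsion-freeness of $\nabla$. For (ii), since $\vartheta^v(H)=0$ only two terms survive, and a second use of torsion-freeness, $\vartheta^v([V,H])=\vartheta^v(\nab{V}H)-\vartheta^v(\nab{H}V)$, makes the $\vartheta^v(\nab{H}V)$ contributions cancel, leaving $\Theta(V,H)=-\vartheta^v(\nab{V}H)=-\shape HV$ by \eqref{eq:shape}. For (iii) both projection terms vanish outright, giving $\Theta(H,K)=-\vartheta^v[H,K]$. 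All three are short manipulations of the torsion identity.

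For the concluding sentence: by bilinearity together with (i), flatness $\Theta\equiv 0$ is equivalent to the simultaneous vanishing of $\Theta(V,H)$ and $\Theta(H,K)$ for all vertical $V$ and horizontal $H,K$. By (ii) the first condition says the collective shape operator $\A$ vanishes identically; identifying $k(\shape HV,W)$ with the fibre second fundamental form evaluated on the vertical pair $(V,W)$ in the normal direction $H$, this is precisely total geodesy of the fibres of $\pi$. By (iii) the second condition says $[H,K]$ is horizontal whenever $H,K$ are, i.e.\ $\HH$ is integrable by the Frobenius theorem. The computation is entirely routine; the only points that need a little care are the tensoriality reduction at the start and, at the end, matching these two equations to the classical notions of totally geodesic fibres and integrable horizontal distribution.
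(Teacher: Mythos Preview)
Your argument is correct and close in spirit to the paper's, but the paper organises things a little more slickly. Rather than computing $d^v\vartheta^v$ case by case, the paper first writes the identity morphism as $\vartheta=\vartheta^v+\vartheta^z$ and observes that $d\vartheta$ is the torsion of the Levi-Civita connection, hence zero; this yields $\Theta=d^v\vartheta^v=\vartheta^v(d\vartheta-d\vartheta^z)=-\vartheta^v d\vartheta^z$, and therefore the single formula
\[
\Theta(A,B)=\vartheta^v\bigl(\nab B(A^z)-\nab A(B^z)\bigr),
\]
from which all three cases follow by inspection (only the \emph{horizontal} components of the arguments appear). Your approach instead applies torsion-freeness separately in each of (i)--(iii). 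Both routes are equally valid; the paper's buys a unified formula, whereas yours is perhaps more transparent for a reader unfamiliar with the $\vartheta^v/\vartheta^z$ trick. The concluding interpretation of flatness via totally geodesic fibres and integrability of $\HH$ is handled the same way in both.
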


\begin{proof}
Let $\vartheta\colon TP\to TP$ denote the identity morphism.  Then:
$$
\vartheta=\vartheta^v+\vartheta^z,
$$
hence:
$$
\Theta=d^v \vartheta^v=\vartheta^v (d\vartheta-d\vartheta^z)
=-\vartheta^v d\vartheta^z,
$$
since $d\vartheta$ is the torsion of the Levi-Civita connection of $(P,k)$.  Therefore, for all $A,B\in\fX(P)$:
\begin{align*}
\Theta(A,B)
&=-\vartheta^v\bigl(\nab A(B^z)-\nab B(A^z)-[A,B]^z\bigr) 
\notag \\
&=\vartheta^v(\nab B(A^z)-\nab A(B^z)).
\label{eq:theta}
\end{align*}
Now (i) is clear, and (iii) follows since $\nabla$ is torsion-free.  For (ii) note that:
\begin{equation*}
\Theta(V,H)=\vartheta^v(\nab VH)=-\shape HV.
\qedhere
\end{equation*}
\end{proof}
  
We extend $\A$ to a $\VV$-valued 2-tensor on $P$, by precomposing with the appropriate projections:
$$
\A(A,B)=\shape{A^z}B^v.
$$
The first variation of higher-power vertical energy may now be written in divergence form, and the Euler-Lagrange equations extracted.

\begin{thm}
\label{thm:firstvar2}
Let $\s$ be a section of a submersion $\pi\colon (P,k)\to(M,g)$ of Riemannian manifolds, and let $\s_t$ be a smooth 1-parameter variation of $\s$ through sections, with $\s_0=\s$ and variation field $V$ at $t=0$. Then for all $r=1,\dots,m${\rm:}
$$
\frac{d}{dt}\Big|_{t=0} \Evr(\s_t)
=-\int_M k(\tvr(\s)+\tr{\newt}(\s^*\A),V)\volume(g),
$$
where $\newt=\nu_{r-1}^v(\s)$.  Thus $\s$ is a $r$-harmonic section if and only if:
$$
\tvr(\s)+\tr{\newt}(\s^*\A)=0.
$$
If $\pi$ has t.g.\ fibres then $\s$ is a $r$-harmonic section precisely when $\tvr(\s)=0$. 
\end{thm}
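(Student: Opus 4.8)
The plan is to derive Theorem~\ref{thm:firstvar2} from the interim formula of Theorem~\ref{thm:firstvar1} by reinterpreting its curvature term $\tr{\newt}\,\theta(d\s,d\s)V$ geometrically via Proposition~\ref{prop:curvformgeom}. All that is really needed is the pointwise identity
$$
\tr{\newt}\,\theta(d\s,d\s)V=k\bigl(\tr{\newt}(\s^*\A),V\bigr),\qquad\newt=\nu^v_{r-1}(\s);
$$
substituting it into Theorem~\ref{thm:firstvar1} produces the stated first variation, and the Euler--Lagrange equation then follows because the variation field $V$ at $t=0$ may be chosen to be an arbitrary section of $\s\inv\VV$ (e.g.\ realise $V$ as the variation field of the fibrewise exponential variation $\s_t=\exp(tV)$), so the fundamental lemma gives $\tvr(\s)+\tr{\newt}(\s^*\A)=0$ as the criterion for $r$-harmonicity.

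To prove the identity I would unwind $\theta$ using \eqref{eq:curvtensor}, write $d\s=d^v\s+d^z\s$, and apply Proposition~\ref{prop:curvformgeom}: part~(i) kills the purely vertical contribution, and part~(ii), combined with the antisymmetry of the $2$-form $\Theta$, replaces the surviving term by the fibre shape operator, $\Theta(d^z\s(Y),V)=\shape{d^z\s(Y)}{V}$. Since this is vertical-valued, only $d^v\s$ enters the remaining $k$-pairing, and the self-adjointness of the shape operator $\shape H{\cdot}$ of the fibres — which follows from \eqref{eq:shape} since $k(\shape HV,W)=k(H,\nab VW)$ is symmetric in the vertical $V,W$, as $[V,W]$ is vertical and hence orthogonal to the horizontal $H$ — lets one move the shape operator onto $V$, turning the curvature term, slot by slot, into $k\bigl((\s^*\A)(\cdot,\cdot),V\bigr)$ paired against $\newt$. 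The final step is to match this against $\tr{\newt}(\s^*\A)$ using the slot-symmetry of the $\newt$-twisted trace (Definition~\ref{defn:twistrace}, legitimate because the Newton tensor $\newt$ is self-adjoint).

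The last assertion is then immediate: totally geodesic fibres have vanishing second fundamental form, so every shape operator $\shape H{\cdot}$ — and hence $\A$ and $\s^*\A$ — vanishes, and the $r$-harmonic section equation reduces to $\tvr(\s)=0$.

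I expect the only real obstacle to be bookkeeping rather than mathematics: in the identity above $\s^*\A$ is not symmetric, so one must be careful about which argument slot the Newton tensor acts in and about applying shape-operator self-adjointness to the correct pair of vertical vectors; once Proposition~\ref{prop:curvformgeom} and the twisted-trace conventions of Definition~\ref{defn:twistrace} are in hand, the rest is a routine, purely pointwise computation.
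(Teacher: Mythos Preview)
Your proposal is correct and follows essentially the same route as the paper: reduce to Theorem~\ref{thm:firstvar1}, then use Proposition~\ref{prop:curvformgeom}\,(i),\,(ii) together with the self-adjointness of the fibre shape operator to rewrite $\tr{\newt}\,\theta(d\s,d\s)V$ as $k(\tr{\newt}(\s^*\A),V)$, and finally observe that $\A$ vanishes for totally geodesic fibres. The paper's proof is slightly more compressed (it introduces $W_i=d^v\s(\newt e_i)$ and $H_i=d^z\s(e_i)$ and computes directly), but the logical content and the ingredients invoked are identical.
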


\begin{proof}
Write $W_i=d^v\s(\newt e_i)$ and $H_i=d^z\s(e_i)$.  Then by Proposition \ref{prop:curvformgeom}\,(i),\,(ii):
\begin{align*}
\tr{\newt}\,\theta(d\s,d\s)V
&=\textstyle\sum_i k(W_i,\shape{H_i}V)
=\textstyle\sum_i k(\shape{H_i}W_i,V)  \\[0.5ex]
&=\textstyle\sum_i k\bigl(\A(d\s(e_i),d\s(\newt e_i)),V\bigr) \\[0.5ex]
&=k(\tr{\newt}(\s^*\A),V).
\end{align*}
The result now follows from Theorem \ref{thm:firstvar1}.
\end{proof}

\begin{rems}
\label{rems:firstvar2}
\item{}
\begin{itemize}[leftmargin=1.4em]
\item[1)]
If $\newt=\nu_{r-1}^v(\s)$ we have the following identity (a generalisation of Lemma \ref{lem:trace} in this situation):
$$
k(d^v\s\circ \newt (X),d^v\s(Y))
=k(\iota_X(d^v\s)^r,\iota_Y(d^v\s)^r),
$$
where the interior products on the right hand side are $(r-1)$-forms on $M$ with values in $\wedge^{r-1}(\s\inv\VV)$.  Thus $\s$ is $r$-horizontal if and only if $d^v\s\circ \newt=0$, in which case both pieces of the Euler-Lagrange operator vanish, corroborating the fact that $r$-horizontal sections are $r$-harmonic sections.

\item[2)]
In most applications, $\pi$ is a fibre bundle with connection and $k$ is a `Kaluza-Klein' metric, in which case $\pi$ is a Riemanannian submersion with t.g.\  fibres \cite{V} (see also Remark \ref{rems:can}\,(3)).
\end{itemize}
\end{rems}

It is interesting to compare the notion of higher-power harmonic sections with sections that are higher-power harmonic maps.   In \cite{Sk} it was proposed to use the $2$-nd energy (of a map) as a  perturbation of the standard ($1$-st) energy, and in this spirit we make the following:

\begin{defn}\label{defn:twistskyr}
A section $\s$ is a \textsl{twisted $r$-skyrmion} with  \textsl{coupling constants} $c_1,\dots,c_r\in\R$, $c_i\geqs0$, $c_1,c_r\neq0$, if $\s$ is a critical point with respect to variations through sections of the hybrid functional:
$$
c_1\/\E^v_{\/1}(\s)+c_2\/\E_{\/2}^v(\s)+\cdots+c_r\/\Evr(\s).
$$
\end{defn}

\begin{thm}\label{thm:hasec}
Let  $\s$ be a section of a Riemannian submersion with t.g.\ fibres.  Then for all $r=1,\dots,m$:
$$
\vrs_r(\s)
=\vrs_r^v(\s)+(m-r+1)\vrs_{r-1}^v(\s)+\cdots
+\textstyle\binom{m-1}{r-1}\vrs_1^v(\s)+\textstyle\binom{m}{r},
$$
and for all $r=1,\dots,m-1${\rm:}
$$
\nu_r(\s)
=\nu_r^v(\s)+(m-r)\nu_{r-1}^v(\s)+\cdots
+\textstyle\binom{m-2}{r-1}\nu_1^v(\s)
+\textstyle\binom{m-1}{r}1.
$$
Furthermore the vertical component of $\tau_r(\s)$ is:
$$
\tau_r(\s)^v=\tvr(\s)+(m-r+1)\tau^v_{r-1}(\s)
+\cdots+\textstyle\binom{m-1}{r-1}\tau^v_1(\s).
$$
Thus $\s$ is a $r$-harmonic map precisely when $\s$ is a twisted $r$-skyrmion with coupling constants $c_i=\binom{m-i}{r-i}$
and the horizontal component of $\tau_r(\s)$ vanishes.
\end{thm}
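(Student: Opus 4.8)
The plan is to reduce everything to the algebraic fact that, regarding $\s$ as a map $M\to(P,k)$, its Cauchy--Green tensor $\a$ in the sense of \eqref{eq:cg} and its vertical Cauchy--Green tensor $\a^v$ in the sense of \eqref{eq:vcg} are related by $\a=1+\a^v$. First I would check this: since $\pi\circ\s=\mathrm{id}_M$ we have $d\pi\circ d\s=1_{TM}$, so $d^z\s(X)$ is the horizontal lift of $X$, and as $\pi$ is a Riemannian submersion, $k(d^z\s(X),d^z\s(Y))=g(X,Y)$; combining this with the orthogonal splitting $d\s=d^v\s+d^z\s$ gives $k(d\s(X),d\s(Y))=g(X,Y)+k(d^v\s(X),d^v\s(Y))$, which is exactly $\a=1+\a^v$. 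The first two displayed identities are then immediate applications of Lemma \ref{lem:char}(i), (ii) with $\a^v$ in place of $\a$: $\vrs_r(\s)=\vrs_r(1+\a^v)$ expands by (i), and $\nu_r(\s)=\upchi_r(1+\a^v)$ by (ii), using $\vrs^v_k(\s)=\vrs_k(\a^v)$ and $\nu^v_k(\s)=\upchi_k(\a^v)$; the only thing left is to match the binomial coefficients.

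For the vertical component of $\tau_r(\s)$ I would integrate the first identity to obtain the energy decomposition $\E_r(\s)=\sum_{k=1}^r\binom{m-k}{r-k}\/\E^v_k(\s)+\tfrac12\binom{m}{r}\volume(M,g)$, in which $\E_r(\s)$ denotes the $r$-th higher-power energy of the \emph{map} $\s\colon M\to(P,k)$. Differentiating along a variation $\s_t$ through sections and invoking Theorem \ref{thm:firstvar2} --- under which the $\s^*\A$ terms vanish because $\pi$ has t.g.\ fibres --- gives $\tfrac{d}{dt}\big|_{0}\E_r(\s_t)=-\int_M k\bigl(\sum_k\binom{m-k}{r-k}\tau^v_k(\s),V\bigr)\volume(g)$. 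On the other hand, the divergence form of the first variation of higher-power energy of maps, as in \eqref{eq:firstvarer} applied with target $(P,k)$, reads $\tfrac{d}{dt}\big|_0\E_r(\s_t)=-\int_M k(\tau_r(\s),V)\volume(g)=-\int_M k(\tau_r(\s)^v,V)\volume(g)$, the last equality because the variation field $V$ of a variation through sections is vertical. Comparing the two and letting $V$ range over vertical fields yields $\tau_r(\s)^v=\sum_{k=1}^r\binom{m-k}{r-k}\tau^v_k(\s)$, which is the stated formula. (One could instead compute $\tau_r(\s)^v$ head-on, decomposing $d\s=d^v\s+d^z\s$ inside $\tau_r(\s)=\trace\nabla(d\s\circ\nu_{r-1}(\s))$: the vertical part of the $d^v\s$-piece is $\trace\nabla^v(d^v\s\circ\nu_{r-1}(\s))$, which expands by the second identity, while the $d^z\s$-piece contributes only terms in the shape operator of the fibres --- zero since they are t.g.\ --- and in the O'Neill tensor of $\pi$, which cancel because that tensor is skew whereas $\nu_{r-1}(\s)$ is self-adjoint.)

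The final assertion then follows by assembling these. By Theorem \ref{thm:rhamap}, $\s$ is an $r$-harmonic map iff $\tau_r(\s)=0$, equivalently iff $\tau_r(\s)^v=0$ and $\tau_r(\s)^z=0$. By the formula for $\tau_r(\s)^v$, the condition $\tau_r(\s)^v=0$ is $\sum_{k=1}^r\binom{m-k}{r-k}\tau^v_k(\s)=0$; but by Theorem \ref{thm:firstvar2} (with t.g.\ fibres) this is exactly the Euler--Lagrange equation for $\s$ to be critical for $\sum_k c_k\/\E^v_k$ with $c_k=\binom{m-k}{r-k}$, and these constants meet the requirements of Definition \ref{defn:twistskyr} ($c_k\geqs0$, and $c_r=1$, $c_1=\binom{m-1}{r-1}$ are nonzero since $1\leqs r\leqs m$). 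Hence $\tau_r(\s)^v=0$ holds precisely when $\s$ is a twisted $r$-skyrmion with those coupling constants, and adjoining $\tau_r(\s)^z=0$ gives the characterisation.

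The computations are routine; the one place where care is needed is keeping the two readings of $\E_r(\s)$ (energy of the map versus linear combination of vertical energies) and the associated two first-variation formulas properly aligned, together with the fact that variations through sections probe only the vertical component of $\tau_r(\s)$ --- this is what forces the horizontal condition to appear separately in the statement.
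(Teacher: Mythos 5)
Your proposal is correct and follows essentially the same route as the paper: establish $\a=1+\a^v$ from the Riemannian submersion property, apply Lemma \ref{lem:char} for the invariants and Newton tensors, and obtain $\tau_r(\s)^v$ by comparing the first variation of $\E_r(\s)$ in the form \eqref{eq:firstvarer} with the expansion into vertical energies via Theorem \ref{thm:firstvar2}. The extra details you supply (verticality of the variation field, the non-vanishing of the coupling constants, and the parenthetical direct computation) are consistent elaborations of the same argument.
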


\begin{proof}
For clarity in this context, let $\a$ (resp.\ $\a^v$) denote the Cauchy-Green (resp.\ vertical Cauchy-Green) tensor of $\s$.  Since $\pi$ is a Riemannian submersion:
\begin{align*}
g(\a(X),Y)
&=k(d\s(X),d\s(Y)) \\
&=k(d^v\s(X),d^v\s(Y))+k(d^z\s(X),d^z\s(Y)) \\
&=g(\a^v(X),Y)+g(X,Y).
\end{align*}
Thus $\a=1+\a^v$, and the expressions for $\vrs_r(\s)$ and $\nu_r(\s)$ follow from Lemma \ref{lem:char}.  Then $\tau_r(\s)^v$ follows by expanding $\E_r(\s)$ as a sum of higher-power vertical energies and comparing the first variations for each using \eqref{eq:firstvarer} and Theorem \ref{thm:firstvar2}.
\end{proof}

When $r=1$ Theorem \ref{thm:hasec} simplifies to:
$$
\tau(\s)^v=\tau^v(\s);
$$
thus $\s$ is a harmonic map precisely when $\s$ is a harmonic section and the horizontal component $\tau(\s)^z$ vanishes.  This is familiar, for example, in the theory of `harmonic unit vector fields', in which context $\tau(\s)^z$ may be expressed as a `twisted Ricci curvature' \cite{Gil}.  In fact this (second) link with curvature generalises fairly comprehensively (Theorem \ref{thm:flatharm}).  To analyse $\tau_r(\s)^z$ we will need the following well-known decomposition of the second fundamental form of a Riemannian submersion, compiled from results of \cite{H,ON,V}.

\begin{lem}
\label{lem:fundform}
Suppose $\pi\colon (P,k)\to(M,g)$ is a Riemannian submersion.
If $H,K$ (resp.\ $V,W$) are horizontal (resp.\ vertical) vector fields on $P$ then:

\begin{itemize}[leftmargin=1.8em, itemsep=1ex]
\item[\rm i)]
$\nabla d\pi(H,K)=0${\rm;}

\item[\rm ii)]
$\nabla d\pi(V,W)=-d\pi(\nab VW)${\rm;}  

\item[\rm iii)]
$g(\nabla d\pi(V,H),d\pi(K))=\tfrac12\/k(V,[\/H,K\/])$.
\end{itemize}
In particular, $\pi$ is a t.g.\ map if and only if $\pi$ is flat.
\end{lem}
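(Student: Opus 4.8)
The plan is to reduce every assertion to a computation with \emph{basic} (projectable) horizontal vector fields together with the Koszul formula on $(P,k)$, exploiting that the second fundamental form $\nabla d\pi$ is tensorial (i.e.\ $C^\infty(P)$-bilinear), so its value at a point depends only on the values of its arguments there; since an arbitrary tangent vector splits as $A=A^v+A^z$ and every horizontal (resp.\ vertical) vector extends to a basic horizontal (resp.\ to a vertical) local field, it suffices to verify (i)--(iii) on fields of those two types. Throughout, for basic horizontal $H$ that is $\pi$-related to $X\in\fX(M)$ one has $d\pi(H)=X\circ\pi$ and, for the pullback connection on $\pi\inv TM$, $\nabla_E(d\pi\,H)=(\nabla_{d\pi(E)}X)\circ\pi$ for any $E$; the two structural inputs from the Riemannian submersion hypothesis are that $k(H,K)=g(X,Y)\circ\pi$ whenever $H,K$ are basic $\pi$-related to $X,Y$, and that $d\pi$ is a fibrewise isometry $\HH\to TM$ (so horizontal vectors agree iff their $\pi$-images do, and basic horizontal fields span $\HH$ pointwise).

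Part (ii) is immediate: $W$ vertical forces $d\pi(W)=0$ identically, whence $\nabla d\pi(V,W)=-d\pi(\nabla_V W)$. For (i), fix basic $H,K,L$ $\pi$-related to $X,Y,Z$; the Koszul formula for $2k(\nabla_H K,L)$ has six terms, and because $k(\cdot,\cdot)$ of basic fields is the pullback of $g(\cdot,\cdot)$, because $H(f\circ\pi)=(Xf)\circ\pi$, and because $[H,K]$ is $\pi$-related to $[X,Y]$ (so $\vartheta^z[H,K]$ is the horizontal lift of $[X,Y]$ and $k([H,K],L)=g([X,Y],Z)\circ\pi$), each term is the pullback of the corresponding Koszul term for $2g(\nabla_X Y,Z)$ on $M$; hence $k(\nabla_H K,L)=g(\nabla_X Y,Z)\circ\pi$ for all basic $L$, so $\vartheta^z(\nabla_H K)$ is the horizontal lift of $\nabla_X Y$, and therefore $d\pi(\nabla_H K)=(\nabla_X Y)\circ\pi=\nabla_H(d\pi\,K)$, i.e.\ $\nabla d\pi(H,K)=0$. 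For (iii), since $d\pi(H)=X\circ\pi$ and $d\pi(V)=0$ we get $\nabla d\pi(V,H)=-d\pi(\nabla_V H)=-d\pi(\vartheta^z\nabla_V H)$; in the Koszul expansion of $2k(\nabla_V H,K)$ the first three terms vanish (as $k(H,K)$ is fibre-constant and $k(V,H)=k(V,K)=0$) and the terms $k([V,H],K)$, $k([V,K],H)$ vanish because $[V,H],[V,K]$ are vertical (being $\pi$-related to $0$), leaving $2k(\nabla_V H,K)=-k([H,K],V)$; combined with $g(d\pi(\vartheta^z\nabla_V H),d\pi\,K)=k(\vartheta^z\nabla_V H,K)=k(\nabla_V H,K)$ this gives $g(\nabla d\pi(V,H),d\pi\,K)=\tfrac12 k(V,[H,K])$.

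For the ``in particular'' clause I would simply read off which components of $\nabla d\pi$ can fail to vanish. By (i) the $\HH\times\HH$ component is always zero; by (ii) the $\VV\times\VV$ component vanishes iff $\vartheta^z(\nabla_V W)=0$ for all vertical $V,W$, i.e.\ iff the fibres are totally geodesic; and by (iii), together with the symmetry of $\nabla d\pi$ and the fact that $d\pi\,K$ ($K$ horizontal) exhausts $TM$, the mixed component vanishes iff $k(V,[H,K])=0$ for all vertical $V$ and horizontal $H,K$, i.e.\ iff $\vartheta^v[H,K]=0$, i.e.\ iff $\HH$ is integrable. Thus $\pi$ is a totally geodesic map iff $\pi$ has t.g.\ fibres and integrable horizontal distribution, which by Proposition \ref{prop:curvformgeom} is exactly the condition that $\pi$ be flat.

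I expect the only genuine obstacle to be the bookkeeping in (i): one must keep scrupulously separate the Levi-Civita connection on $(P,k)$, the pullback connection on $\pi\inv TM$, and the Levi-Civita connection on $(M,g)$ --- all written $\nabla$ --- and justify the passage from ``equality tested against all basic $L$'' to ``equality of horizontal vectors'' via the fact that basic horizontal fields span $\HH$ pointwise and that $d\pi$ is a fibrewise isometry there. Parts (ii) and (iii) and the deduction of the final clause are then short, with (iii) in particular recovering the familiar O'Neill identity for the horizontal part of $\nabla_V H$.
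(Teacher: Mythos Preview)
Your proof is correct. The paper itself does not prove this lemma: it introduces the statement as ``the following well-known decomposition of the second fundamental form of a Riemannian submersion, compiled from results of \cite{H,ON,V}'' and gives no argument. Your direct verification via basic horizontal fields and the Koszul formula is precisely the standard route (essentially O'Neill's computation for (i) and (iii)), and your deduction of the final clause by reading off the three blocks of $\nabla d\pi$ and invoking Proposition~\ref{prop:curvformgeom} is exactly how the paper expects the reader to connect the lemma to flatness.
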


\begin{lem}
\label{lem:flatharm}
Let $\s$ be a section of a Riemannian submersion $\pi\colon (P,k)\to(M,g)$ with t.g.\ fibres.  Then for all $X,Y,Z\in\fX(M)${\rm:}
$$
2\/g(d\pi\circ\nabla d\s(X,Y),Z)
=\s^*\theta(X,Y)Z+\s^*\theta(Y,X)Z,
$$
where $\theta$ is the curvature $3$-tensor defined in \eqref{eq:curvtensor}.
\end{lem}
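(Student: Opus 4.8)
The plan is to exploit the defining identity $\pi\circ\s=\mathrm{id}_M$ of a section, together with the standard composition formula for second fundamental forms and the structure equations of Lemma \ref{lem:fundform}. Since the second fundamental form of the identity map of $M$ vanishes, the composition formula
$$
\nabla d(\pi\circ\s)(X,Y)=d\pi\bigl(\nabla d\s(X,Y)\bigr)+\nabla d\pi\bigl(d\s(X),d\s(Y)\bigr)
$$
gives at once $d\pi\circ\nabla d\s(X,Y)=-\nabla d\pi\bigl(d\s(X),d\s(Y)\bigr)$. First I would split $d\s(X)=d^v\s(X)+d^z\s(X)$ (and likewise for $Y$) and expand the right-hand side bilinearly, using the symmetry of $\nabla d\pi$, into a vertical-vertical term, a horizontal-horizontal term, and two mixed terms. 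The horizontal-horizontal term vanishes by Lemma \ref{lem:fundform}\,(i); the vertical-vertical term equals $-d\pi(\nabla_V W)$ by Lemma \ref{lem:fundform}\,(ii), which vanishes because totally geodesic fibres force $\nabla_V W$ to be vertical, hence killed by $d\pi$. So only the two mixed terms survive.

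Next I would pair with $Z$, using $K=d^z\s(Z)$ as the horizontal lift of $Z$ along $\s(M)$ (so that $d\pi(K)=Z$), and apply Lemma \ref{lem:fundform}\,(iii) to each surviving mixed term. This yields
$$
2\,g\bigl(d\pi\circ\nabla d\s(X,Y),Z\bigr)=-k\bigl(d^v\s(X),[d^z\s(Y),d^z\s(Z)]\bigr)-k\bigl(d^v\s(Y),[d^z\s(X),d^z\s(Z)]\bigr).
$$
It then remains to recognise the right-hand side as $\s^*\theta(X,Y)Z+\s^*\theta(Y,X)Z$. By Proposition \ref{prop:curvformgeom}\,(iii), $\vartheta^v[d^z\s(Y),d^z\s(Z)]=-\Theta(d^z\s(Y),d^z\s(Z))$, and since $d^v\s(X)$ is vertical only this vertical component of the bracket contributes. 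Moreover the totally geodesic hypothesis makes the fibres' shape operator $\A$ vanish identically, so by Proposition \ref{prop:curvformgeom}\,(i),\,(ii) the mixed and vertical-vertical pieces of $\Theta(d\s(Y),d\s(Z))$ drop out, giving $\Theta(d\s(Y),d\s(Z))=\Theta(d^z\s(Y),d^z\s(Z))$. Hence $k(d^v\s(X),[d^z\s(Y),d^z\s(Z)])=-\s^*\theta(X,Y)Z$ by \eqref{eq:curvtensor} (recalling that $\theta$ reads off only the vertical part of its first slot), and substituting proves the identity.

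The bilinear bookkeeping is routine; the point that needs care is the double use of the totally geodesic hypothesis -- first to kill the vertical-vertical Hessian term via Lemma \ref{lem:fundform}\,(ii), and then to collapse $\Theta(d\s,d\s)$ onto $\Theta(d^z\s,d^z\s)$ so that the two surviving mixed terms assemble exactly into the symmetrised curvature $3$-tensor, with no residual shape-operator contributions. A minor technical point is that $d^v\s(X)$ and $d^z\s(X)$ are a priori only defined along $\s(M)$ and must be extended to vertical, resp.\ horizontal, vector fields on a neighbourhood in $P$ in order to invoke Lemma \ref{lem:fundform}; this is harmless, since each identity used is tensorial in these arguments -- the Lie bracket entering only through the genuine $2$-form $\vartheta^v[H,K]=-\Theta(H,K)$.
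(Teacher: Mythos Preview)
Your argument is correct and follows essentially the same route as the paper: the composition formula for $\nabla d(\pi\circ\s)$, Lemma~\ref{lem:fundform} to eliminate the horizontal--horizontal and vertical--vertical pieces and evaluate the mixed ones, then Proposition~\ref{prop:curvformgeom} to rewrite the bracket terms via $\Theta$. The only cosmetic difference is that the paper computes on the diagonal $X=Y$ and polarises at the end, whereas you carry general $X,Y$ throughout; your treatment is also slightly more explicit about why the vertical--vertical term vanishes and about the tensoriality needed to apply Lemma~\ref{lem:fundform} to fields defined only along $\s(M)$.
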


\begin{proof}
Successive differentiation of the equation $\pi\circ\s=1_M$ yields:
$$
\nabla d\pi(d\s,d\s)+d\pi\circ\nabla d\s=0.
$$
Applying first Lemma \ref{lem:fundform}:
\begin{align*}
g(d\pi\circ\nabla d\s(X,X),Z)
&=-g\bigl(\nabla d\pi(d\s(X),d\s(X)),Z\bigr) \\
&=-2\/g\bigl(\nabla d\pi(d^v\s(X),d^z\s(X)),Z\bigr) \\
&=-k(d^v\s(X),\,[\/d^z\s(X),\/d^z\s(Z)\/]),
\shortintertext{and then Proposition \ref{prop:curvformgeom}:} 
&=k\bigl(d^v\s(X),\Theta(d^z\s(X),d^z\s(Z))\bigr) \\
&=k\bigl(d\s(X),\Theta(d\s(X),d\s(Z))\bigr) \\
&=\s^*\theta(X,X)Z,
\quad\text{by \eqref{eq:curvtensor}.}
\end{align*}
The identity follows by polarisation.
\end{proof}

Applying Lemma \ref{lem:flatharm} to Corollary \ref{cor:srtension}, in conjunction with Theorem \ref{thm:hasec}, allows us to characterise the harmonicity of flat sections.

\begin{thm}
\label{thm:flatharm}
If $\s$ is a section of a Riemannian submersion $\pi\colon (P,k)\to(M,g)$ with t.g.\ fibres then the horizontal component of $\tau_r(\s)$ is given by:
$$
g(d\pi\circ\tau_r(\s),X)
= g(\diverge \newt,X) + \tr{\newt}\/\s^*\theta(\cdot,\cdot)X,
$$
for all $X\in \fX(M)$, where $\newt=\nu_{r-1}(\s)$.  In particular, if $\s$ is flat then:
$$
d\pi\circ\tau_r(\s)=\diverge\newt,
$$ 
so $\s$ is a $r$-harmonic map precisely when $\s$ is a twisted $r$-skyrmion with coupling constants $c_i=\binom{m-i}{r-i}$ and the full Newton tensor $\nu_{r-1}(\s)$ is solenoidal.
\end{thm}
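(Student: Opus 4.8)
The plan is to apply Corollary~\ref{cor:srtension} to the section $\s$ regarded as a map into $P$, giving $\tau_r(\s)=\tr{\newt}\nabla d\s+d\s(\diverge\newt)$ with $\newt=\nu_{r-1}(\s)$, and then to project onto the horizontal distribution by composing with $d\pi$. Differentiating $\pi\circ\s=1_M$ once yields $d\pi\circ d\s=1_{TM}$, so $d\pi\bigl(d\s(\diverge\newt)\bigr)=\diverge\newt$ outright; the only work is therefore to identify $d\pi\circ\tr{\newt}\nabla d\s$. The key observation is that $\nabla d\s$ enters here only through $d\pi\circ\nabla d\s$, and that this is precisely the tensor computed in Lemma~\ref{lem:flatharm}, so no further geometric input (curvature of $P$, O'Neill tensors, and so on) beyond what that lemma already packages is needed.

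Concretely, I would write $g\bigl(d\pi(\tr{\newt}\nabla d\s),X\bigr)=\sum_i g\bigl(d\pi\circ\nabla d\s(e_i,\newt e_i),X\bigr)$ and substitute Lemma~\ref{lem:flatharm} to obtain $\tfrac12\sum_i\bigl(\s^*\theta(e_i,\newt e_i)X+\s^*\theta(\newt e_i,e_i)X\bigr)$. The minor subtlety is that $\s^*\theta(\cdot,\cdot)X$ is \emph{not} symmetric in its first two slots---its asymmetry is precisely the curvature form---but since $\newt$ is self-adjoint both sums $\sum_i\s^*\theta(e_i,\newt e_i)X$ and $\sum_i\s^*\theta(\newt e_i,e_i)X$ equal the twisted trace $\tr{\newt}\,\s^*\theta(\cdot,\cdot)X$ by Definitions~\ref{defn:twistrace}. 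Combining with the previous paragraph gives the first displayed identity. If $\s$ is flat then $\s^*\Theta=0$, hence $\s^*\theta=0$ by \eqref{eq:curvtensor}, and the identity collapses to $d\pi\circ\tau_r(\s)=\diverge\newt$.

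For the final assertion I would use that, for a Riemannian submersion, $d\pi$ restricts to an isometry $\HH\to TM$ and annihilates $\VV$, so $\tau_r(\s)^z=0$ if and only if $d\pi\circ\tau_r(\s)=0$, which in the flat case means $\diverge\nu_{r-1}(\s)=0$. By Theorem~\ref{thm:rhamap}, $\s$ is an $r$-harmonic map if and only if $\tau_r(\s)=0$, equivalently $\tau_r(\s)^z=0$ and $\tau_r(\s)^v=0$. For the vertical component, Theorem~\ref{thm:hasec} gives $\tau_r(\s)^v=\sum_{i=1}^r\binom{m-i}{r-i}\tau^v_i(\s)$, while Theorem~\ref{thm:firstvar2} (t.g.\ fibres) identifies the first variation of $c_1\E^v_1(\s)+\cdots+c_r\Evr(\s)$ with $-\int_M k\bigl(\sum_i c_i\tau^v_i(\s),V\bigr)\volume(g)$; taking $c_i=\binom{m-i}{r-i}$, which satisfies $c_i\geqs0$, $c_1=\binom{m-1}{r-1}\neq0$ and $c_r=1\neq0$ as Definition~\ref{defn:twistskyr} requires, shows that $\tau_r(\s)^v=0$ is precisely the twisted $r$-skyrmion condition. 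Assembling the two equivalences finishes the proof.

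I do not anticipate a serious obstacle: the statement is essentially a reorganisation of Corollary~\ref{cor:srtension}, Lemma~\ref{lem:flatharm}, Theorem~\ref{thm:hasec} and Theorem~\ref{thm:firstvar2}. The two places that demand care are the twisted-trace step---where one must notice that the two sums $\sum_i\s^*\theta(e_i,\newt e_i)X$ and $\sum_i\s^*\theta(\newt e_i,e_i)X$ coincide because $\newt$ is self-adjoint, \emph{not} because $\s^*\theta$ is symmetric (it is not: its asymmetry is the curvature)---and the bookkeeping of the binomial coupling constants $\binom{m-i}{r-i}$, which must be matched against the sign and non-vanishing conditions in Definition~\ref{defn:twistskyr}.
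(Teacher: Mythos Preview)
Your proposal is correct and follows exactly the approach the paper intends: apply $d\pi$ to Corollary~\ref{cor:srtension}, use $d\pi\circ d\s=1_{TM}$ on the divergence term, invoke Lemma~\ref{lem:flatharm} on the Hessian term, and then appeal to Theorem~\ref{thm:hasec} for the vertical component and the twisted-skyrmion interpretation. Your treatment of the twisted-trace step (the two sums agreeing by self-adjointness of $\newt$, not by symmetry of $\s^*\theta$) is precisely the point that needs care, and you handle it correctly.
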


Since horizontal sections are flat (Remarks \ref{rems:curvform}) we obtain the following synopsis of the horizontal case.

\begin{thm}\label{thm:harmhoriz}
Let $\s$ be a section of a submersion $\pi$ of Riemannian manifolds.

\begin{itemize}[leftmargin=1.5em, itemsep=0.5ex]
\item[\rm i)]
If $\s$ is $r$-horizontal then $\s$ is a $r$-harmonic section.

\item[\rm ii)]
If $\pi$ is a Riemannian submersion with t.g.\ fibres and $\s$ is horizontal then $\s$ is a $r$-harmonic map for all $r=1,\dots,m$.
\end{itemize}
\end{thm}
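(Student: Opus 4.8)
The plan is to derive both statements as immediate corollaries of results already established. For part (i), I would invoke Remarks \ref{rems:firstvar2}\,(1): if $\s$ is $r$-horizontal then $d^v\s\circ\newt=0$ where $\newt=\nu^v_{r-1}(\s)$, since the interior products $\iota_X(d^v\s)^r$ all vanish (the rank of $d^v\s$ being everywhere less than $r$ forces $(d^v\s)^r=0$). Consequently the vertical tension field $\tvr(\s)=\trace\nabla^v(d^v\s\circ\newt)=0$, and likewise the twisted trace $\tr{\newt}(\s^*\A)=\sum_i\A(d\s(e_i),d^v\s(\newt e_i))=0$ because $d^v\s(\newt e_i)=0$. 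By Theorem \ref{thm:firstvar2} the full Euler--Lagrange operator $\tvr(\s)+\tr{\newt}(\s^*\A)$ therefore vanishes, so $\s$ is a $r$-harmonic section.

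For part (ii), I would first observe that a horizontal section is $r$-horizontal for every $r\geqs1$ (its vertical derivative $d^v\s$ is identically zero, so certainly has rank less than $r$), hence by part (i) it is a $r$-harmonic section for all $r$. It remains to upgrade this to $r$-harmonic \emph{map}. Here I would appeal to Theorem \ref{thm:hasec}: since $\pi$ is a Riemannian submersion with t.g.\ fibres, the vertical component of $\tau_r(\s)$ is the linear combination $\tvr(\s)+(m-r+1)\tau^v_{r-1}(\s)+\cdots+\binom{m-1}{r-1}\tau^v_1(\s)$ of vertical tension fields of degrees $1,\dots,r$; each of these vanishes because $\s$ is $k$-horizontal (hence $k$-harmonic section) for every $k$, so $\tau_r(\s)^v=0$. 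For the horizontal component I would use Theorem \ref{thm:flatharm}: a horizontal section is flat (Remarks \ref{rems:curvform}), so $d\pi\circ\tau_r(\s)=\diverge\newt$ with $\newt=\nu_{r-1}(\s)$; and since $\s$ is horizontal one has $\a^v=0$, whence $\a=1+\a^v=1$, so all the Newton tensors $\nu_{r-1}(\s)$ are constant multiples of the identity and thus parallel, giving $\diverge\newt=0$. Hence both components of $\tau_r(\s)$ vanish and, by Theorem \ref{thm:rhamap}, $\s$ is a $r$-harmonic map.

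I do not anticipate a serious obstacle: the argument is essentially bookkeeping, assembling Theorems \ref{thm:firstvar2}, \ref{thm:hasec}, \ref{thm:flatharm} and \ref{thm:rhamap} together with the horizontality hypothesis. The one point requiring a moment's care is confirming that horizontality of $\s$ really does force $\a^v=0$ (immediate from \eqref{eq:vcg}, since $d^v\s=0$) and hence that every $\nu_{r-1}(\s)$ is parallel, so that the divergence term in Theorem \ref{thm:flatharm} genuinely drops out; once that is noted, both parts follow in a few lines.
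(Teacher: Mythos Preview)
Your proposal is correct and follows essentially the same route as the paper: part~(i) via Remark~\ref{rems:firstvar2}\,(1), and part~(ii) via Theorems~\ref{thm:hasec} and~\ref{thm:flatharm} together with the observation that horizontality forces the full Cauchy--Green tensor to be the identity, so that $\nu_{r-1}(\s)$ is a constant multiple of the identity and hence solenoidal. The only cosmetic difference is that the paper packages the vertical and horizontal components together by invoking the final sentence of Theorem~\ref{thm:flatharm} (the twisted-skyrmion characterisation) rather than checking $\tau_r(\s)^v=0$ separately via Theorem~\ref{thm:hasec}; your more explicit split is equivalent.
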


\begin{proof} 
Part (i) follows from Remark \ref{rems:firstvar2}\,(1).  
Part (ii) follows from Theorem \ref{thm:flatharm}.  For, if $\s$ is horizontal then $\s$ is a $r$-harmonic section for all $r$, hence a twisted $r$-skyrmion for any coupling.  Furthermore, all vertical Newton tensors of $\s$ vanish, so the full Newton tensor $\nu_{r-1}(\s)$ is a constant multiple of the identity (Theorem  \ref{thm:hasec}), hence solenoidal.  
\end{proof}

\begin{rem}
In certain situations Theorem \ref{thm:harmhoriz} becomes rigid; see for example Theorem \ref{thm:compactvb} and Corollary \ref{cor:hphasecsb}.
\end{rem}

\section{Higher-power harmonic sections of Riemannian vector bundles}
\label{sec:vbundles}

We interpret the results of Section \ref{sec:firstvar} when the submersion $\pi$ is a \textsl{Riemannian vector bundle;} that is, a vector bundle $\pi\colon\EE\to M$ equipped with a linear connection $\nabla$ and holonomy-invariant fibre metric $\<\cdot,\cdot\>$.  Our analysis makes heavy use of the associated connection map, which we briefly review. The \textsl{square} of $\EE$ \cite{St} is the pullback bundle $\pi\inv\EE\to\EE$:
$$
\pi\inv\EE=\{(v,w)\in\EE\times\EE:\pi(v)=\pi(w)\},
$$
which may be equipped with the pullbacks of both $\nabla$ and the fibre metric.  The \textsl{connection map} for $\nabla$ is the $\pi\inv\EE$-valued 1-form $\k$ on $\EE$ defined for all $A\in T\EE$ by:
\begin{equation}
\k(A)=\nab A\upchi,
\label{eq:connmap}
\end{equation}
where $\upchi$ is the \textsl{diagonal section} of the square bundle, defined for all $v\in\EE$ by:
$$
\upchi(v)=(v,v).
$$
Then $\k$ is surjective and $\k|_\VV=\iota$, where $\iota\colon\VV\to\pi\inv\EE$ is the \textsl{canonical isomorphism,} obtained by amalgamating the canonical identifications of vertical tangent spaces with the fibre of $\EE$ to which they are tangent.  Given a Riemannian metric $g$ on $M$, the Riemannian metric of choice on $\EE$ is the \textsl{Sasaki metric,} defined:
\begin{equation}
k(A,B)=g(d\pi(A),d\pi(B))+\<\k(A),\k(B)\>,
\label{eq:sasaki}
\end{equation}
for all $A,B\in\fX(\EE)$.
Then $\pi$ is a Riemannian submersion with $\HH=\ker\k$, the horizontal distribution of $\nabla$.  The \textsl{curvature form} $K$ of $\nabla$ is the exterior covariant derivative of $\k$:
\begin{align}
K(A,B)
=d^\nabla\!\k(A,B)
&=\nab A(\k B)-\nab B(\k A)-\k[A,B] 
\notag \\[0.5ex] 
&=R^\nabla\!(A,B)\upchi,
\label{eq:curv}
\end{align}
where $R^\nabla$ is the curvature tensor of $\nabla$.  Thus $K$ is a $\pi\inv\EE$-valued 2-form on $\EE$; we note two of its properties.

\begin{prop}\label{prop:curv}
\item{}
\begin{itemize}[leftmargin=1.5em, itemsep=0.5ex]
\item[\rm i)] 
$K$ is horizontal; that is, $K(A,B)=0$ whenever $A$ or $B$ is vertical.

\item[\rm ii)]
$K$ measures the failure of $\k$ to intertwine the Levi-Civita connection of the Sasaki metric with the linear connection in $\EE$:
$$
\nab A(\k B)-\k(\nab AB)=\tfrac12\/K(A,B).
$$
\end{itemize}
\end{prop}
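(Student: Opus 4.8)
The plan is to read (i) straight off \eqref{eq:curv}, and to prove (ii) by reducing, via tensoriality, to vector fields of pure vertical or horizontal type, where the identity becomes a statement about the submersion geometry of the Sasaki metric. For (i): by \eqref{eq:curv} one has $K(A,B)=R^\nabla(A,B)\upchi$, and the connection on the square bundle $\pi\inv\EE\to\EE$ is the $\pi$-pullback of $\nabla$, so its curvature is the $\pi$-pullback of $R^\nabla$; hence $R^\nabla(A,B)\upchi$ depends on $A$ and $B$ only through $d\pi(A)$ and $d\pi(B)$, and vanishes as soon as one of $A,B$ lies in $\VV=\ker d\pi$. (Equivalently: since $\HH=\ker\k$ and $\k|_\VV=\iota$ one has $\k=\iota\circ\vartheta^v$, whence $K=d^\nabla\k=\iota\circ(d^v\vartheta^v)=\iota\circ\Theta$ once $\iota$ is known to be parallel, cf.\ below, and $\Theta$ is horizontal by Proposition \ref{prop:curvformgeom}\,(i).)

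For (ii): I would first check that $S(A,B):=\nab A(\k B)-\k(\nab AB)$ is bilinear over $\CC^\infty(\EE)$ --- the usual difference-of-connections computation --- so that it suffices to prove $S=\tfrac12K$ when $A,B$ are each vertical or horizontal, and, by bilinearity, when they are vertical, resp.\ horizontal, lifts. Writing $\k=\iota\circ\vartheta^v$, there are three cases. (a) $B$ vertical: then $\k B=\iota B$ and $\k(\nab AB)=\iota(\nabv A B)$, so $S(A,B)=\nab A(\iota B)-\iota(\nabv A B)$, which vanishes because $\iota$ intertwines $\nabla^v$ with the pullback connection on $\pi\inv\EE$; and $K(A,B)=0$ by (i). (b) $B$ horizontal, $A$ vertical: then $\k B=0$, so $S(A,B)=-\iota(\vartheta^v\nab AB)=\iota(\shape BA)$ by \eqref{eq:shape}, which vanishes since the fibres of $\pi$ are totally geodesic for a Sasaki metric (cf.\ Remarks \ref{rems:firstvar2}\,(2)); again $K(A,B)=0$. (c) $A,B$ both horizontal: then $\k B=0$, and the Koszul formula gives $\vartheta^v\nab AB=\tfrac12\vartheta^v[A,B]=-\tfrac12\Theta(A,B)$ by Proposition \ref{prop:curvformgeom}\,(iii), so $S(A,B)=\tfrac12\iota\Theta(A,B)=\tfrac12K(A,B)$, the last equality being $K=\iota\circ\Theta$. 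Bilinearity then yields the identity for arbitrary $A,B$. (Case (c) may be shortcut: torsion-freeness of the Levi-Civita connection forces the antisymmetric part of $S$ to equal $\tfrac12K$ identically, so only the vanishing of $S(A,A)$ --- in effect cases (a) and (b) --- need be checked.)

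Every case hinges on a single fact: that the canonical isomorphism $\iota\colon\VV\to\pi\inv\EE$ is parallel, i.e.\ carries the projected connection $\nabla^v$ on $\VV$ to the $\pi$-pullback of $\nabla$ on $\pi\inv\EE$; this also gives $K=\iota\circ\Theta$, and with it (i). Together with the elementary submersion identity $\vartheta^v\nab HK=\tfrac12\vartheta^v[H,K]$ for horizontal $H,K$, it follows from the Sasaki metric \eqref{eq:sasaki} making $\pi$ a Riemannian submersion with totally geodesic fibres, combined with the defining relation \eqref{eq:connmap} for $\k$. The attendant computations are routine but demand careful bookkeeping of the canonical identifications of vertical tangent spaces with the fibres of $\EE$; establishing the parallelism of $\iota$ cleanly is where the real work lies.
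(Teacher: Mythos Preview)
Your argument for (i) via the pullback curvature is correct and is essentially what the paper means by ``standard''. The parenthetical alternative, however, misreads Proposition~\ref{prop:curvformgeom}: part~(i) there only asserts $\Theta(V,W)=0$ when \emph{both} arguments are vertical, whereas part~(ii) gives $\Theta(V,H)$ as (minus) the shape operator applied to $V$, which is generally nonzero. So $\Theta$ is not horizontal without the further input of totally geodesic fibres, and ``$K=\iota\circ\Theta$ hence (i)'' is not a self-contained argument.

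For (ii), your case-by-case reduction is valid in outline, but within the paper's logic it is circular: both the parallelism of $\iota$ (equation~\eqref{eq:can}) and the fact that the fibres are totally geodesic (Remarks~\ref{rems:can}\,(3)) are derived \emph{from} this proposition, not assumed prior to it. You correctly flag that parallelism of $\iota$ is ``where the real work lies'', but you do not carry it out, and your suggestion that it follows from $\pi$ being a Riemannian submersion with totally geodesic fibres together with~\eqref{eq:connmap} is not a proof: totally geodesic fibres constrain $\nabla_V W$ for vertical $V,W$ but say nothing about $\nabla_H V$ for horizontal $H$, which is exactly what parallelism of $\iota$ in the horizontal direction requires. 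The paper's route is simply to compute (ii) directly from the Koszul formula (the holonomy-invariance of $\langle\cdot,\cdot\rangle$ being essential), and then read off parallelism of $\iota$ and totally geodesic fibres as corollaries. Your route would need essentially the same Koszul computation, repackaged as an independent proof of the parallelism of $\iota$; neither approach avoids it, and the paper's ordering is the more economical one since (ii) is precisely the identity used downstream.
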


\begin{proof}
(i) is standard, and (ii) follows, after some computation, from the Koszul characterisation of the Levi-Civita connection (in which the holonomy-invariance of $\<\cdot,\cdot\>$ is crucial).  
\end{proof}

\begin{rems}
\label{rems:can}
\item{}
\begin{itemize}[leftmargin=1.4em]
\item[1)]
Since $\k|_\VV=\iota$ it follows from Proposition \ref{prop:curv} that the canonical isomorphism is connection-preserving:
\begin{equation}
\nab A(\iota V)=\iota(\nabv A V),
\label{eq:can}
\end{equation}
for all vertical vector fields $V$ on $\EE$.  This fact is crucial.

\item[2)]
Proposition \ref{prop:curv} confirms that Definition \ref{defn:curvform} of the curvature form $\Theta$ is consistent with established terminology:
\begin{align}
\iota\circ\Theta(A,B)
&=\iota\circ d^v \vartheta^v(A,B) 
\notag \\
&=\iota\,\nabv A(\vartheta^vB)-\iota\,\nabv B(\vartheta^vA)-\iota\circ\vartheta^v[A,B] 
\notag \\
&=\nab A(\k B)-\nab B(\k A)-\k[A,B],
\quad\text{by \eqref{eq:can}}
\notag \\
&=K(A,B).
\label{eq:curvform}
\end{align}

\item[3)]
Placing \eqref{eq:curvform} alongside Propositions \ref{prop:curv} and \ref{prop:curvformgeom} confirms that $\pi$ has t.g.\ fibres (see also Remark \ref{rems:firstvar2}).
\end{itemize}
\end{rems}  
 
Now let $\s$ be a section of $\pi$.  Equation \eqref{eq:connmap} yields the characteristic property:
\begin{equation}
\iota\circ d^v\s(X)=\k(d\s(X))=\nab X\s,
\label{eq:cov}
\end{equation}
for all $X\in TM$, after the natural identification of $\s\inv\pi\inv\EE$ with $\EE$.  Thus $\s$ is horizontal precisely when $\s$ is parallel.

\begin{rem}
\label{rem:flat}
From \eqref{eq:curv} and \eqref{eq:curvform}:
\begin{equation}
\iota\circ\s^*\Theta(X,Y)=\Rnabla(X,Y)\s.
\label{eq:vbcurvtensor}
\end{equation}
Thus $\s$ is flat (Definition \ref{defn:curvform}) precisely when $\Rnabla(X,Y)\s=0$ for all $X,Y\in\fX(M)$. 
\end{rem}

Plugging \eqref{eq:cov} into \eqref{eq:vcg} yields the vertical Cauchy-Green tensor:
\begin{equation}
g(\a(X),Y)
=\<\nab X\s,\nab Y\s\>,
\label{eq:vcgvb}
\end{equation}
from which it follows that:
\begin{equation}
\vrs_r^v(\s)=\|(\nabla\s)^r\|^2,
\label{eq:srvbdensity}
\end{equation}
where:
$$
(\nabla\s)^r=\nabla\s\wedge\cdots\wedge\nabla\s,
$$
the $r$-fold exterior product. 

\begin{defn}\label{defn:rpar}
A section $\s$ is \textsl{$r$-parallel} if the rank of $\nabla\s$ is strictly less than $r$.  Thus $\s$ is $1$-parallel if and only if $\s$ is parallel in the usual sense, and by \eqref{eq:cov} $\s$ is $r$-parallel precisely when $\s$ is $r$-horizontal (Definition \ref{defn:rhoriz}). 
\end{defn}

\par
When $M$ is compact Theorem \ref{thm:harmhoriz} becomes rigid.

\begin{thm}\label{thm:compactvb}
Let $\s$ be a section of a Riemannian vector bundle with compact 
base, equipped with the Sasaki metric.  For all $r=1,\dots,m${\rm:}

\begin{itemize}[leftmargin=1.5em, itemsep=0.5ex]
\item[\rm i)]
$\s$ is a $r$-harmonic section if and only if $\s$ is $r$-parallel.

\item[\rm ii)]
$\s$ is a $r$-harmonic map if and only if $\s$ is parallel.
\end{itemize}
\end{thm}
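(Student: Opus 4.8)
The plan is to reduce both statements to a single elementary device: the radial rescaling $\s_t=(1+t)\s$ (say $t\in\R$), which is simultaneously a smooth variation of $\s$ through sections of $\pi$ and an honest variation of maps $M\to\EE$. Equipped with its Sasaki metric, $\pi$ is a Riemannian submersion with totally geodesic fibres (Remarks\ \ref{rems:can}), and $\nabla\s_t=(1+t)\,\nabla\s$, so by\ \eqref{eq:srvbdensity} and the homogeneity of the elementary invariants $\vrs_r^v(\s_t)=(1+t)^{2r}\vrs_r^v(\s)$; hence $\Evr(\s_t)=(1+t)^{2r}\Evr(\s)$ and
\[
\tfrac{d}{dt}\big|_{t=0}\,\Evr(\s_t)=2r\,\Evr(\s).
\]
This identity is the whole point of part (i). It also shows that $\Evr$ is non-negative with zero set exactly the $r$-parallel sections, so the content of (i) is that on a compact base there are no other critical points.

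\emph{Part (i).} The ``if'' direction is immediate: by Definition\ \ref{defn:rpar} an $r$-parallel section is $r$-horizontal, hence an $r$-harmonic section by Theorem\ \ref{thm:harmhoriz}\,(i) (equivalently Remarks\ \ref{rems:firstvar2}\,(1)). Conversely, if $\s$ is an $r$-harmonic section then it is in particular a critical point of $\Evr$ along the variation $\s_t$, so the displayed derivative vanishes and $\Evr(\s)=0$. Since $\vrs_r^v(\s)=\|(\nabla\s)^r\|^2$ is a non-negative continuous function on the compact manifold $M$, it vanishes identically; that is, $\rank\nabla\s<r$ everywhere, which by Definition\ \ref{defn:rpar} says precisely that $\s$ is $r$-parallel.

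\emph{Part (ii).} For the ``if'' direction, a parallel section is horizontal by\ \eqref{eq:cov}, so it is an $r$-harmonic map for every $r$ by Theorem\ \ref{thm:harmhoriz}\,(ii). For the converse we also need the horizontal effect of the rescaling, and this is where Theorem\ \ref{thm:hasec} enters: by the computation in its proof the Cauchy--Green tensor of the map $\s_t$ is $1+(1+t)^2\a^v$, where $\a^v$ is the vertical Cauchy--Green tensor of $\s$, so Lemma\ \ref{lem:char}\,(i) together with homogeneity gives
\[
\vrs_r(\s_t)=\ts\sum_{k=0}^{r}\binom{m-k}{r-k}(1+t)^{2k}\,\vrs_k^v(\s),\qquad \vrs_0^v(\s)=1.
\]
If $\s$ is an $r$-harmonic map it is critical along $\s_t$, so differentiating at $t=0$ and integrating over $M$ yields $\sum_{k=1}^{r}k\binom{m-k}{r-k}\int_M\vrs_k^v(\s)\volume(g)=0$. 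Each coefficient $k\binom{m-k}{r-k}$ is strictly positive for $1\leqs k\leqs r\leqs m$ and each integrand $\vrs_k^v(\s)=\|(\nabla\s)^k\|^2$ is non-negative, so every term vanishes; in particular $\int_M\|\nabla\s\|^2\volume(g)=0$, whence $\nabla\s=0$ and $\s$ is parallel.

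There is no genuine analytic obstacle: compactness of $M$ is used only to differentiate $\Evr(\s_t)$ and $\E_r(\s_t)$ under the integral sign and to pass from ``$\int_M(\text{non-negative})=0$'' to pointwise vanishing. The only point deserving care is the bookkeeping in (ii) --- checking that the rescaling meshes with the splitting $\a=1+\a^v$ exactly as in Theorem\ \ref{thm:hasec}, and that the binomial coefficients surviving differentiation are genuinely positive --- and both are routine.
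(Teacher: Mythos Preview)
Your proof is correct and follows essentially the same approach as the paper: both use the radial rescaling $\s_t=(1+t)\s$, compute the first variation via the homogeneity $\vrs_r^v(\s_t)=(1+t)^{2r}\vrs_r^v(\s)$ (and, for (ii), the expansion of $\vrs_r(\s_t)$ coming from Theorem~\ref{thm:hasec}), and conclude from non-negativity of the integrands. Your write-up is slightly more explicit about the ``if'' directions and the bookkeeping of the binomial coefficients, but the argument is the same.
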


\begin{proof}
Consider the variation:
$$
\s_t=(1+t)\s,
\quad t>-1.
$$ 
Then by \eqref{eq:srvbdensity}:
$$
\vrs_r^v(\s_t)=(1+t)^{2r}\|(\nabla\s)^r\|^2,
$$
hence by \eqref{eq:rvertenergy}:
$$
\frac{d}{dt}\Big|_{t=0} \Evr (\s_t)
=r\int_M\|(\nabla\s)^r\|^2\volume(g),
$$
which yields (i).  By Theorem \ref{thm:hasec}:
\begin{align*}
\frac{d}{dt}\Big|_{t=0} \E_r(\s_t)
&=2\int_M \bigl(r\|(\nabla\s)^r\|^2
+\cdots+\textstyle\binom{m-1}{r-1}\|\nabla\s\|^2\bigr)\volume(g),
\end{align*}
hence $\s$ is a $r$-harmonic map precisely when $\nabla\s=0$. 
\end{proof}

\begin{rems}
\item{}
\begin{itemize}[leftmargin=1.4em]
\item[1)]
Theorem \ref{thm:compactvb} is well-known when $r=1$: if $M$ is compact every harmonic section of $\EE$ is parallel \cite{I, N, W1}.

\item[2)]
There is a generalisation to the non-compact environment for sections of constant length (Corollary \ref{cor:hphasecsb}). 
\end{itemize}
\end{rems}
 
For a general characterisation of $r$-harmonic sections, we use the canonical isomorphism to realise the higher-power vertical tension fields as sections of $\pi$:
\begin{equation}
\T_r(\s)=\iota\circ\tvr(\s),
\label{eq:srtensionvb}
\end{equation}
and recall the second covariant derivative:
\begin{equation}
\nabsq XY\s=\nab X(\nab Y\s)-\nab{\nab XY}\s,
\label{eq:2cov}
\end{equation}
which entwines the linear connection in $\pi$ and the Levi-Civita connection of $(M,g)$.

\begin{thm}\label{thm:hphasecvb}
Let $\s$ be a section of a Riemannian vector bundle $\EE$, equipped with the Sasaki metric.  Then:
$$
\T_r(\s)=\tr{\newt} \nabla^2\s+\nab{\diverge \newt}\s,
$$
where $\newt=\nu_{r-1}^v(\s)$, and $\s$ is a $r$-harmonic section if and only if $\T_r(\s)=0$.
\end{thm}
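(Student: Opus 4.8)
The plan is to express the abstract $r$-harmonic section condition from Theorem~\ref{thm:firstvar2} entirely in the language of the Riemannian vector bundle $\EE$, by transporting everything through the canonical isomorphism $\iota\colon\VV\to\pi\inv\EE$. The first step is to combine Theorem~\ref{thm:vsrtension}, which gives
$$
\tvr(\s)=\tr{\newt}\nabla^v d^v\s+d^v\s(\diverge\newt),
$$
with the identification \eqref{eq:srtensionvb} of $\T_r(\s)$ with $\iota\circ\tvr(\s)$. Applying $\iota$ to the right-hand side, I would handle the two summands separately.

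For the second summand, applying $\iota$ to $d^v\s(\diverge\newt)$ gives $\nab{\diverge\newt}\s$ directly by the characteristic property \eqref{eq:cov} (the Newton tensor $\newt$ and hence its divergence are tensors on the base, so there is no subtlety here). For the first summand I need to show $\iota\circ(\nabla^v d^v\s)(X,Y)=\nabsq XY\s$ for all $X,Y\in\fX(M)$. This is where the connection-preserving property \eqref{eq:can} of the canonical isomorphism is essential: expanding $\nabla^v d^v\s(X,Y)=\nabv X(d^v\s(Y))-d^v\s(\nab XY)$, applying $\iota$, and using \eqref{eq:can} to move $\iota$ past $\nabla^v$ together with \eqref{eq:cov} to rewrite $\iota\circ d^v\s=\nabla\s$, I obtain $\nab X(\nab Y\s)-\nab{\nab XY}\s$, which is exactly the second covariant derivative $\nabsq XY\s$ defined in \eqref{eq:2cov}. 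Since $\iota$ is a linear isometry on fibres it commutes with the twisted trace $\tr{\newt}$, so altogether $\T_r(\s)=\tr{\newt}\nabla^2\s+\nab{\diverge\newt}\s$.

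For the final equivalence, I would invoke Theorem~\ref{thm:firstvar2}: there $\s$ is a $r$-harmonic section iff $\tvr(\s)+\tr{\newt}(\s^*\A)=0$. For a Riemannian vector bundle with the Sasaki metric, $\pi$ is a Riemannian submersion with totally geodesic fibres (Remarks~\ref{rems:can}\,(3)), so the shape operator $\A$ vanishes and the condition reduces to $\tvr(\s)=0$; applying the isomorphism $\iota$, this is equivalent to $\T_r(\s)=0$.

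The only mildly delicate point is the verification that $\iota\circ(\nabla^v d^v\s)(X,Y)=\nabsq XY\s$, i.e.\ making sure the connection-preserving identity \eqref{eq:can}, which is stated for vertical vector fields on $\EE$, applies correctly after pullback along $\s$ to the section $d^v\s(Y)$ of $\s\inv\VV$ and the base vector field $X$; this is routine once one is careful about the natural identification of $\s\inv\pi\inv\EE$ with $\EE$ already used in \eqref{eq:cov}. Everything else is a direct substitution, so I expect no real obstacle.
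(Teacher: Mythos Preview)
Your proposal is correct and follows essentially the same route as the paper: apply $\iota$ to the decomposition of Theorem~\ref{thm:vsrtension}, use \eqref{eq:cov} for the $d^v\s(\diverge\newt)$ term, use \eqref{eq:can} together with \eqref{eq:cov} to identify $\iota\circ\nabla^v d^v\s(X,Y)$ with $\nabsq XY\s$, and then invoke Theorem~\ref{thm:firstvar2} and the totally geodesic fibres (Remarks~\ref{rems:can}\,(3)) for the equivalence. The paper's proof is line-for-line the same argument.
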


\begin{proof}
Applying the canonical isomorphism to Theorem \ref{thm:vsrtension}, and using \eqref{eq:cov}:
\begin{equation*}
\T_r(\s)=\nab{\diverge \newt}\s+\iota(\tr{\newt} \nabla^v d^v\s).
\label{eq:45}
\end{equation*}
By Proposition \ref{prop:curv}:
\begin{align}
\iota\,\nabla^v d^v\s(X,Y)
&=\iota\,\nabv X(d^v\s(Y))
-\iota\circ d^v\s(\nab XY)
\notag \\
&=\nab X(\nab Y\s)-\nabla\s(\nab XY),
\quad\text{by \eqref{eq:can} and \eqref{eq:cov}}
\notag \\
&=\nabsq XY\s.
\label{eq:vsff}
\end{align}
Hence:
$$
\iota(\tr{\newt} \nabla^v d^v\s)=\tr{\newt} \nabla^2\s.
$$
The characterisation of $r$-harmonic sections follows from Theorem \ref{thm:firstvar2}, since $\pi$ has t.g.\ fibres.
\end{proof}

\begin{rems}\label{rems:vbvsff}
\item{}
\begin{itemize}[leftmargin=1.4em]
\item[1)]
If $r=1$ then Theorem \ref{thm:hphasecvb} yields the familiar characterisation $\nabla^*\nabla\s=0$ for harmonic sections $\s$, where $\nabla^*\nabla=-\trace\nabla^2$ is the \textsl{rough Laplacian.}   

\item[2)]
From equation \eqref{eq:vsff} in the proof of Theorem \ref{thm:hphasecvb}, the curvature $R^\nabla(\cdot,\cdot)\s$ is the antisymmetrisation of the vertical second fundamental form of $\s$;  thus the latter is symmetric if and only if $\s$ is flat (see also Remarks \ref{rems:curvform}).
\end{itemize}
\end{rems}

\begin{cor}\label{cor:hphasecvb}
With the same hypotheses as Theorem \ref{thm:hphasecvb}, if $\s$ is a $r$-harmonic section of $\EE$ then so is $c\/\s$ for all $c\in\R$.
\end{cor}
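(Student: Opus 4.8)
The plan is to deduce $r$-harmonicity of $c\/\s$ directly from the characterisation $\T_r(c\/\s)=0$ supplied by Theorem~\ref{thm:hphasecvb}, by checking that replacing $\s$ with $c\/\s$ merely multiplies $\T_r$ by a fixed power of $c$. First, since $c$ is a constant, $\nab X(c\/\s)=c\/\nab X\s$ for every $X$, so by \eqref{eq:vcgvb} the vertical Cauchy--Green tensor of $c\/\s$ equals $c^2$ times that of $\s$ (equivalently $(\nabla(c\/\s))^r=c^r(\nabla\s)^r$, whence $\vrs_r^v(c\/\s)=c^{2r}\/\vrs_r^v(\s)$ by \eqref{eq:srvbdensity}). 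Applying the homogeneity of Lemma~\ref{lem:char}\,(iii), with scalar $c^2$ in degree $r-1$, then gives $\nu_{r-1}^v(c\/\s)=c^{2(r-1)}\/\newt$, where $\newt=\nu_{r-1}^v(\s)$.

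Next I assemble the remaining ingredients of Theorem~\ref{thm:hphasecvb} for the section $c\/\s$. Because $c$ is constant, $\diverge\nu_{r-1}^v(c\/\s)=c^{2(r-1)}\diverge\newt$, and $\nabla^2(c\/\s)=c\/\nabla^2\s$ by the linearity of the second covariant derivative \eqref{eq:2cov} in the section; moreover the $T$-twisted trace of Definition~\ref{defn:twistrace} is $\R$-linear in the tensor $T$. Feeding these scalings into the expression for $\T_r(c\/\s)$ given by Theorem~\ref{thm:hphasecvb} and collecting the powers of $c$, one obtains
$$
\T_r(c\/\s)=c^{2r-1}\bigl(\tr{\newt}\nabla^2\s+\nab{\diverge\newt}\s\bigr)=c^{2r-1}\/\T_r(\s).
$$
Hence if $\s$ is a $r$-harmonic section, so that $\T_r(\s)=0$, then $\T_r(c\/\s)=0$ and $c\/\s$ is a $r$-harmonic section. (The case $c=0$ also follows at once without the formula: the zero section is parallel, hence $r$-horizontal, hence a $r$-harmonic section by Theorem~\ref{thm:harmhoriz}\,(i).)

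I expect no genuine obstacle here: the argument is bookkeeping with the connection, and the only point needing care is that the vertical Newton tensor acquires the factor $c^{2(r-1)}$ rather than $c^{r-1}$, since the vertical Cauchy--Green tensor is quadratic in $\nabla\s$; the powers of $c$ then combine to the odd exponent $2r-1$. As an alternative one may argue variationally: for $c\neq0$ the scaling $\rho\mapsto c\/\rho$ is an invertible transformation of the space of sections of $\EE$ (with inverse $\rho\mapsto c^{-1}\rho$) that multiplies $\Evr$ by the positive constant $c^{2r}$ (by \eqref{eq:srvbdensity} and \eqref{eq:rvertenergy}), and therefore preserves the set of critical points, while $c=0$ is handled as above.
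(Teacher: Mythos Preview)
Your proof is correct and follows essentially the same route as the paper's: both show that the vertical Cauchy--Green tensor scales by $c^2$, invoke Lemma~\ref{lem:char}\,(iii) to deduce $\nu_{r-1}^v(c\s)=c^{2(r-1)}\newt$, and then plug into Theorem~\ref{thm:hphasecvb} to obtain $\T_r(c\s)=c^{2r-1}\T_r(\s)$. Your additional handling of $c=0$ and the variational alternative are extra, but the core argument is the paper's.
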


\begin{proof}
If $\hat\s=c\/\s$ then $\hat\a=c^2\a$ by \eqref{eq:vcgvb}, and by the homogeneity of the vertical Newton tensor (Lemma \ref{lem:char}):
$$
\hat\newt=\nu_{r-1}^v(\hat\s)=\upchi_{r-1}(\hat\a)
=c^{2r-2}\,\upchi_{r-1}(\a)=c^{2r-2}\newt.
$$
Therefore by Theorem \ref{thm:hphasecvb}:
\begin{equation}
\T_r(\hat\s)=c^{2r-1}\,\T_r(\s),
\label{eq:trscaled}
\end{equation}
from which the result follows.
\end{proof}

\begin{rem}\label{rem:rpowerspace}
The $r$-harmonic sections of $\EE$ do not generally constitute a linear subspace if $r>1$; see however Theorem \ref{thm:3dzero} and Remark \ref{rems:3dzero}\,(2).
\end{rem}

We also interpret the expression for the horizontal component of $\tau_r(\s)$ from Theorem \ref{thm:flatharm}.  For any $\EE$-valued $1$-form $\eta$ on $M$ we define the Ricci-type $\EE$-valued $1$-form $S_\eta$ by:
\begin{equation}
S_\eta(X)=\ts\sum_i \Rnabla (X,e_i)\eta(e_i).
\label{eq:Seta}
\end{equation}

\begin{thm}\label{thm:taurhoriz}
The horizontal component of $\tau_r(\s)$ is given by:
$$
g(d\pi\circ\tau_r(\s),X)
= g(\diverge \newt,X) + \<S_{\eta}(X),\s\>,
$$
for all $X\in \fX(M)$, where $\newt=\nu_{r-1}(\s)$ and $\eta=(\nabla\s)\circ \newt$.  In particular, if $\EE=TM$ equipped with the standard Riemannian structure then:
$$
d\pi\circ\tau_r(\s)=\diverge \newt+\ts\sum_i R(\s,\eta(e_i))e_i,
$$
where $R$ is the Riemann tensor.
\end{thm}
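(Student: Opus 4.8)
The plan is to specialise Theorem~\ref{thm:flatharm}. By construction the Sasaki metric makes $\pi\colon\EE\to M$ a Riemannian submersion with totally geodesic fibres (Remarks~\ref{rems:can}), so that theorem already gives, with $\newt=\nu_{r-1}(\s)$,
$$
g(d\pi\circ\tau_r(\s),X)=g(\diverge\newt,X)+\tr{\newt}\s^*\theta(\cdot,\cdot)X
$$
for all $X\in\fX(M)$. Hence everything reduces to identifying the curvature term, and the whole content of the theorem is the claim
$$
\tr{\newt}\s^*\theta(\cdot,\cdot)X=\<S_\eta(X),\s\>,\qquad\eta=(\nabla\s)\circ\newt,
$$
together with, in the case $\EE=TM$, the algebraic identity $\<S_\eta(X),\s\>=g\bigl(\sum_iR(\s,\eta(e_i))e_i,X\bigr)$.

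To unwind the curvature term I would fix a local orthonormal frame $\{e_i\}$. Since $\newt$ is self-adjoint the $\newt$-twisted trace is slot-symmetric (Definitions~\ref{defn:twistrace}), and the definition \eqref{eq:curvtensor} of the curvature $3$-tensor gives
$$
\tr{\newt}\s^*\theta(\cdot,\cdot)X=\ts\sum_i\s^*\theta(\newt e_i,e_i)X=\ts\sum_i k\bigl(d\s(\newt e_i),\s^*\Theta(e_i,X)\bigr).
$$
As $\s^*\Theta$ is $\s\inv\VV$-valued, only the vertical part $d^v\s(\newt e_i)$ of $d\s(\newt e_i)$ contributes; and since $\k|_\VV=\iota$ while $d\pi|_\VV=0$, formula \eqref{eq:sasaki} shows $\iota$ is a fibrewise isometry onto the square bundle with its pulled-back fibre metric. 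Applying $\iota$ and invoking \eqref{eq:cov} and \eqref{eq:vbcurvtensor} turns the summand into $\<\nab{\newt e_i}\s,\Rnabla(e_i,X)\s\>$.

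Next, because $\<\cdot,\cdot\>$ is holonomy-invariant each $\Rnabla(e_i,X)$ is skew-adjoint, and combined with the antisymmetry of $\Rnabla$ in its tangent arguments this yields
$$
\<\nab{\newt e_i}\s,\Rnabla(e_i,X)\s\>=-\<\Rnabla(e_i,X)\nab{\newt e_i}\s,\s\>=\<\Rnabla(X,e_i)\,\nab{\newt e_i}\s,\s\>.
$$
Since $\nab{\newt e_i}\s=\eta(e_i)$, summing over $i$ and recalling the definition \eqref{eq:Seta} of $S_\eta$ produces exactly $\<S_\eta(X),\s\>$, which is the first formula. For $\EE=TM$ one then has $\<\cdot,\cdot\>=g$, $\nabla$ the Levi-Civita connection and $\Rnabla=R$, and it remains only to verify the slot-swap $\sum_ig(R(X,e_i)\eta(e_i),\s)=g\bigl(\sum_iR(\s,\eta(e_i))e_i,X\bigr)$; this holds termwise by the pair symmetry together with the two antisymmetries of the Riemann tensor, and nondegeneracy of $g$ then delivers $d\pi\circ\tau_r(\s)=\diverge\newt+\sum_iR(\s,\eta(e_i))e_i$.

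I do not anticipate a genuine obstacle: once Theorem~\ref{thm:flatharm} and the dictionary \eqref{eq:cov}, \eqref{eq:vbcurvtensor} relating $\s^*\Theta$ to $\Rnabla$ are in hand the computation is essentially forced. The one point requiring care is the order of operations --- applying the twisted-trace symmetry first, so that $\newt$ ends up acting inside $\nabla\s$ (and hence assembles into $\eta$) rather than inside $\Rnabla$ --- together with the routine but easily-mishandled vertical/horizontal bookkeeping for $d\s$.
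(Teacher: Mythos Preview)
Your proposal is correct and follows essentially the same route as the paper: invoke Theorem~\ref{thm:flatharm}, translate $\s^*\theta$ into $\Rnabla$ via \eqref{eq:cov} and \eqref{eq:vbcurvtensor}, use holonomy-invariance (skew-adjointness of $\Rnabla$) to move $\s$ outside, and then the self-adjointness of $\newt$ to place it inside $\nabla\s$ and assemble $\eta$. The only cosmetic difference is that the paper first computes $\s^*\theta(Y,Z)X=\<\Rnabla(X,Z)\nab Y\s,\s\>$ in general and then takes the twisted trace, whereas you expand the trace first and identify each summand; the content is identical.
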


\begin{proof}
By equations \eqref{eq:cov} and \eqref{eq:vbcurvtensor} and the holonomy-invariance of $\nabla$:
$$
\s^*\theta(Y,Z)X
=\<\nab Y\s,\Rnabla (Z,X)\s\>
=\<\Rnabla (X,Z)\nab Y\s,\s\>.
$$
Therefore since $\newt$ is self-adjoint:
$$
\tr{\newt}\,\s^*\theta(\cdot,\cdot)X
=\ts\sum_i \<\Rnabla (X,e_i)\nab{\newt e_i}\s,\s\>
=\<S_{\eta}(X),\s\>.
$$
The result now follows from Theorem \ref{thm:flatharm} and  additional symmetries of the Riemann tensor.
\end{proof}

The rigidity of Theorem \ref{thm:compactvb} may be mitagated for sections of constant length $q>0$ (topology permitting) by restricting the entire variational problem to the sphere subbundle:
$$
\SS\EE(q)=\{v\in\EE:\<v,v\>=q^2\}.
$$   
The metric on $\SS=\SS\EE(q)$ is simply the restriction of the Sasaki metric.  It follows from holonomy-invariance of the fibre metric of $\EE$ that $\SS$ is a holonomy-invariant subbundle; hence the horizontal distribution of $\EE$ is tangent to $\SS$, and therefore coincides with the horizontal distribution of $\SS$.  This simplifies things considerably; for example, the vertical derivative of a section $\s$ of $\SS$ is unchanged if $\s$ is regarded as a section of $\EE$, and consequently so are its higher-power vertical energies and vertical Newton tensors.  There will however be a change in the higher-power vertical tension fields, which we analyse via the first variation.  

\begin{thm}\label{thm:hphasecsb}
Suppose $\s$ is a section of $\EE$ with constant length $q>0$.  Then $\s$ is a $r$-harmonic section of the sphere bundle $\SS\EE(q)$ if and only if $\T_r(\s)$ is a pointwise multiple of $\s$.  The Euler-Lagrange equations are:
$$
\T_r(\s)=-\frac{r}{q^2}\,\|(\nabla\s)^r\|^2\s,
$$
where $\T_r(\s)$ is given by Theorem \ref{thm:hphasecvb}.
\end{thm}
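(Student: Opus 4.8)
The plan is to run the constrained first-variation argument exactly as in the proof of Theorem~\ref{thm:firstvar2}, but now allowing only variations through sections of the sphere bundle $\SS\EE(q)$, and then to identify the extra constraint this imposes on the Euler--Lagrange operator. First I would observe that, because $\SS$ is a holonomy-invariant subbundle, its vertical distribution at a point $v$ is the orthogonal complement inside $\VV_v$ of the radial direction $\iota^{-1}(v)$; equivalently, under the canonical isomorphism $\iota$, a vertical vector tangent to $\SS$ at $\s(x)$ corresponds to an element of $\EE_x$ orthogonal to $\s(x)$. Consequently the admissible variation fields $V$ are precisely those sections of $\s\inv\VV$ with $\<\iota V,\s\>=0$ pointwise; every smooth section of $\SS$-variations produces such a $V$, and conversely every such $V$ is realised (e.g.\ by $\s_t = q(\s+tV')/\norm{\s+tV'}$ for a lift $V'$). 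I would also note that, as remarked just before the statement, $d^v\s$, the vertical Cauchy--Green tensor $\a$, the invariants $\vrs_r^v(\s)$ and the Newton tensors $\newt=\nu_{r-1}^v(\s)$ are all unchanged by viewing $\s$ inside $\SS$ rather than $\EE$, so $\tr{\newt}\nabla^v d^v\s$ and hence $\tvr(\s)$, $\T_r(\s)$ are computed by the same formula (Theorem~\ref{thm:hphasecvb}).

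Next I would apply Theorem~\ref{thm:firstvar2} (valid since $\pi$, with the Sasaki metric, is a Riemannian submersion with t.g.\ fibres, by Remarks~\ref{rems:can}) to get
\[
\frac{d}{dt}\Big|_{t=0}\Evr(\s_t)=-\int_M k(\tvr(\s),V)\,\volume(g)=-\int_M \<\T_r(\s),\iota V\>\,\volume(g),
\]
the curvature term having disappeared because the fibres are totally geodesic. The section $\s$ is then $r$-harmonic in $\SS$ iff this integral vanishes for \emph{every} admissible $V$, i.e.\ for every section $V$ of $\s\inv\VV$ with $\<\iota V,\s\>=0$. By the usual fundamental-lemma argument this is equivalent to $\T_r(\s)(x)$ being orthogonal, in $\EE_x$, to the hyperplane $\s(x)^\perp$ for every $x$ — that is, $\T_r(\s)$ is a pointwise scalar multiple of $\s$, say $\T_r(\s)=f\s$ for some function $f$ on $M$.

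To pin down $f$ I would differentiate the constraint $\<\s,\s\>=q^2$ twice. Taking $X\in\fX(M)$ gives $\<\nab X\s,\s\>=0$; taking $\nab X$ again and using metric-compatibility yields $\<\nabla^2\s(X,Y),\s\>=-\<\nab X\s,\nab Y\s\>=-g(\a(X),Y)$, where $\a$ is the vertical Cauchy--Green tensor \eqref{eq:vcgvb}. Pairing the formula $\T_r(\s)=\tr{\newt}\nabla^2\s+\nab{\diverge\newt}\s$ from Theorem~\ref{thm:hphasecvb} with $\s$ and using $\<\nab{\diverge\newt}\s,\s\>=0$ then gives
\[
q^2 f=\<\T_r(\s),\s\>=\sum_i\<\nabla^2\s(e_i,\newt e_i),\s\>=-\sum_i g(\a(e_i),\newt e_i)=-\trace(\a\circ\newt)=-r\,\vrs_r^v(\s),
\]
by Lemma~\ref{lem:trace} with $\newt=\upchi_{r-1}(\a)$. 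Since $\vrs_r^v(\s)=\norm{(\nabla\s)^r}^2$ by \eqref{eq:srvbdensity}, this is exactly $f=-\tfrac{r}{q^2}\norm{(\nabla\s)^r}^2$, giving the stated Euler--Lagrange equation. The only genuinely delicate point is the first step — correctly characterising the admissible variation fields as the $\s$-orthogonal sections of $\s\inv\VV$ and checking that \emph{all} of them arise from sections of $\SS$, so that the vanishing condition really is ``$\T_r(\s)\perp\s^\perp$'' and not something weaker; everything after that is the two-line computation above together with Lemma~\ref{lem:trace}.
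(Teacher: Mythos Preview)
Your argument is correct and follows essentially the same route as the paper: characterise the admissible variation fields as those with $\<\iota V,\s\>=0$ (with the converse via rescaling $\s+t\z$), apply Theorem~\ref{thm:firstvar2} to obtain the first variation as $-\int_M\<\T_r(\s),\iota V\>\volume(g)$, deduce $\T_r(\s)=f\s$, and then identify $f$ by pairing with $\s$ using $\<\nabsq XY\s,\s\>=-g(\a(X),Y)$ and Lemma~\ref{lem:trace}. The paper's proof is the same in structure and detail.
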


\begin{proof}
By Theorem \ref{thm:firstvar2} and definition \eqref{eq:sasaki} of the Sasaki metric:
\begin{equation}
\frac{d}{dt}\Big|_{t=0} \Evr(\s_t)
=-\int_M \<\T_r(\s),\iota V\>\volume(g).
\label{eq:firstvarvb}
\end{equation}
Now $\iota V$ is a section $\z$ of $\EE$, which since $\s_t$ is a variation through sections of constant length satisfies:
$$
\<\z,\s\>
=\frac{d}{dt}\Big|_{t=0}\<\s_t,\s\>
=\frac12 \frac{d}{dt}\Big|_{t=0}\<\s_t,\s_t\>=0.
$$
Conversely, if $\z$ is a section of $\EE$ pointwise orthogonal to $\s$ then it is possible to construct a variation of $\s$ in $\SS$ with variation field $\iota V=\z$, for example by appropriately rescaling $\s+t\z$.  It follows that $\s$ is a $r$-harmonic section of $\SS$ if and only if $\T_r(\s)=f\s$ for some smooth function $f\colon M\to\R$.  Since $\s$ has constant length and the fibre metric is holonomy-invariant:
$$
\<\nab X\s,\s\>=0,
\qquad
\<\nabsq XY\s,\s\>=-\<\nab X\s,\nab Y\s\>=-\<\a(X),Y\>,
$$
by \eqref{eq:vcgvb}.
Therefore by Theorem \ref{thm:hphasecvb}:
\begin{align*}
q^2f
=\<\T_r(\s),\s\>
&=\<\tr{\newt}\nabla^2\s,\s\>
=-\trace(\a\circ \newt) \\
&=-r\/\vrs_r^v(\s)
=-r\/\|(\nabla\s)^r\|^2,
\end{align*}
by Lemma \ref{lem:trace} and equation \eqref{eq:srvbdensity}.
\end{proof}

\begin{rems}
\item{}
\begin{itemize}[leftmargin=1.4em]
\item[1)]
When $r=q=1$ the Euler-Lagrange equations of Theorem \ref{thm:hphasecsb} reduce to:
\begin{equation}
\nabla^*\nabla\s=\lVert\nabla\s\rVert^2\s,
\label{eq:s1hasecsb}
\end{equation}
familiar from \cite{Wie, W3} and subsequent papers on `harmonic unit vector fields' \cite{Gil}.  

\item[2)]
Theorem \ref{thm:hphasecsb} shows in effect that the higher-power vertical tension fields of a section $\s$ of $\SS$ are obtained by orthogonally projecting onto $T\SS$ those of $\s$ when regarded as a section of $\EE$, as we would expect.
\end{itemize}
\end{rems}

\begin{cor}
\label{cor:hphasecsb}
Suppose $\s$ is a section of $\EE$ with constant length.  Then $\s$ is a $r$-harmonic section of $\EE$ if and only if $\s$ is $r$-parallel.  In particular, $\s$ is a $r$-harmonic section of $\EE$ for all $r\geqs\rank\EE$. 
\end{cor}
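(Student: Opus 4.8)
The plan is to deduce this from Theorem~\ref{thm:hphasecsb}, since a section of constant length $q$ that is $r$-harmonic in $\EE$ is \emph{a fortiori} $r$-harmonic in $\SS\EE(q)$ (the class of competing sections is smaller), so its vertical tension satisfies the Euler-Lagrange equation of that theorem. More precisely: if $\s$ is a $r$-harmonic section of $\EE$ then in particular $\T_r(\s)=0$ by Theorem~\ref{thm:hphasecvb}; but $\s$ is also a competitor in the restricted problem on $\SS\EE(q)$, so by Theorem~\ref{thm:hphasecsb} we must have $\T_r(\s)=-\tfrac{r}{q^2}\,\|(\nabla\s)^r\|^2\,\s$. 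Comparing the two expressions forces $\|(\nabla\s)^r\|^2\s=0$, and since $\s$ has constant length $q>0$ it is nowhere zero, hence $\|(\nabla\s)^r\|^2\equiv0$, i.e.\ $(\nabla\s)^r=0$ pointwise. By \eqref{eq:srvbdensity} this says $\vrs_r^v(\s)\equiv0$, which by \eqref{eq:extpower}-type reasoning (the vanishing of the $r$-th elementary invariant of a positive semi-definite endomorphism) means $\rank\nabla\s<r$ everywhere; that is exactly $r$-parallelism in the sense of Definition~\ref{defn:rpar}.

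For the converse, if $\s$ is $r$-parallel then $\rank\nabla\s<r$ everywhere, so by \eqref{eq:srvbdensity} and Remark~\ref{rems:firstvar2}\,(1) (applied via the canonical isomorphism, cf.\ equation~\eqref{eq:cov}) we have $d^v\s\circ\nu_{r-1}^v(\s)=0$; consequently $\T_r(\s)=0$ by Theorem~\ref{thm:hphasecvb}, and also the $\A$-term in Theorem~\ref{thm:firstvar2} vanishes since $\pi$ has t.g.\ fibres. Thus $\s$ is a $r$-harmonic section of $\EE$. Equivalently one can invoke Theorem~\ref{thm:harmhoriz}\,(i) directly, since $r$-parallel is $r$-horizontal by Definition~\ref{defn:rpar}.

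The final sentence is then immediate: if $r\geqs\rank\EE$ then the rank of $\nabla\s$, being at most $\dim(\text{fibre})=\rank\EE$, is automatically $<r$ unless $r=\rank\EE$ and $\nabla\s$ has full fibre rank at some point; but even in that borderline case $\rank\nabla\s\leqs\rank\EE=r$, and one checks that $\vrs_r^v(\s)$ can be nonzero only when $m\geqs r$ and the rank equals $r$\,---\,handled cleanly by noting $\vrs_r^v(\s)=\|(\nabla\s)^r\|^2$ vanishes as soon as $\rank\nabla\s<r$, so for $r>\rank\EE$ it vanishes identically, giving $r$-parallelism and hence $r$-harmonicity by the first part. (For $r=\rank\EE$ one needs the strict inequality in Definition~\ref{defn:rpar}, so a short remark is warranted, but it follows the same way once one observes that $r$-harmonicity of such $\s$ in $\EE$ would again feed into Theorem~\ref{thm:hphasecsb} and force $(\nabla\s)^r=0$.)

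The only mild obstacle is bookkeeping the canonical-isomorphism translations ($\T_r$ versus $\tvr$, $\nabla\s$ versus $d^v\s$) carefully enough that the application of Theorem~\ref{thm:hphasecsb} to a section which happens to be critical in the \emph{larger} space is airtight\,---\,the key point being that critical-in-$\EE$ $\Rightarrow$ critical-in-$\SS\EE(q)$, not the reverse. No genuinely new computation is needed beyond what Theorems~\ref{thm:hphasecvb} and~\ref{thm:hphasecsb} already provide.
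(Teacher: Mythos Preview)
Your argument for the main equivalence is correct and essentially identical to the paper's: use Theorem~\ref{thm:hphasecvb} to get $\T_r(\s)=0$, then feed this into Theorem~\ref{thm:hphasecsb} to force $(\nabla\s)^r=0$. The converse via Theorem~\ref{thm:harmhoriz}\,(i) is fine (the paper leaves it implicit).

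However, your handling of the final sentence has a genuine gap at $r=\rank\EE$. You correctly note that $\rank\nabla\s\leqs\rank\EE$, so the case $r>\rank\EE$ is immediate, but for $r=\rank\EE$ your parenthetical argument is circular: you propose to deduce $(\nabla\s)^r=0$ by assuming $\s$ is $r$-harmonic in $\EE$ and invoking Theorem~\ref{thm:hphasecsb}, yet $r$-harmonicity is precisely what you are trying to establish. What is missing is the observation the paper actually uses: since $\s$ has constant length and the fibre metric is holonomy-invariant, $\<\nab X\s,\s\>=\tfrac12\,X\<\s,\s\>=0$, so $\nabla\s$ takes values in the corank-$1$ subbundle $\uperp{\s}\subset\EE$. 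Hence $\rank\nabla\s\leqs\rank\EE-1<\rank\EE$, and $\s$ is $r$-parallel for every $r\geqs\rank\EE$ without any appeal to harmonicity. Insert this one line and the proof is complete.
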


\begin{proof}
If $\s$ is a non-zero $r$-harmonic section of $\EE$ then it follows from Theorems \ref{thm:hphasecvb} and \ref{thm:hphasecsb} that $\s$ is a $r$-harmonic section of $\SS$ with $(\nabla\s)^r=0$.  Furthermore, since $\s$ has constant length and the fibre metric is holonomy-invariant, $\nabla\s$ takes values in the corank $1$ subbundle $\uperp{\s}\subset\EE$ and therefore has rank strictly less than $p-m$; so $\s$ is $r$-parallel for all $r\geqs p-m$.
\end{proof}

\begin{cor}
\label{cor:hphasecusb}
Let $\s$ be a section of $\EE$ with constant length $q>0$.  Then $\s$ is a $r$-harmonic section of $\SS\EE(q)$ if and only if $\s/q$ is a $r$-harmonic section of $\SS\EE(1)$.
\end{cor}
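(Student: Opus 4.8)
The plan is to compare, via Theorem \ref{thm:hphasecsb}, the Euler-Lagrange equations characterising $r$-harmonic sections of the two sphere bundles $\SS\EE(q)$ and $\SS\EE(1)$, exploiting the fibrewise-scaling homogeneity already recorded in the proof of Corollary \ref{cor:hphasecvb}. Write $\hat\s=\s/q$; this is a section of $\EE$ of constant length $1$, so Theorem \ref{thm:hphasecsb} applies to it with $q=1$.

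First I would assemble the relevant scaling identities. Taking $c=1/q$ in equation \eqref{eq:trscaled} gives $\T_r(\hat\s)=q^{1-2r}\,\T_r(\s)$. Since $\nabla\hat\s=q^{-1}\nabla\s$, multilinearity of the exterior power together with \eqref{eq:srvbdensity} gives $\|(\nabla\hat\s)^r\|^2=q^{-2r}\,\|(\nabla\s)^r\|^2$, and of course $\hat\s=q^{-1}\s$. Substituting these three relations into the Euler-Lagrange equation of Theorem \ref{thm:hphasecsb} for $\SS\EE(1)$, namely $\T_r(\hat\s)=-r\,\|(\nabla\hat\s)^r\|^2\,\hat\s$, the left-hand side becomes $q^{1-2r}\,\T_r(\s)$ and the right-hand side becomes $-r\,q^{-2r-1}\,\|(\nabla\s)^r\|^2\,\s$. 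Cancelling the common nonzero factor $q^{1-2r}$ produces precisely $\T_r(\s)=-\tfrac{r}{q^2}\,\|(\nabla\s)^r\|^2\,\s$, the Euler-Lagrange equation of Theorem \ref{thm:hphasecsb} for $\SS\EE(q)$. As every manipulation is reversible, the two conditions are equivalent, which is the assertion.

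There is no genuine obstacle here; the only point requiring care is the bookkeeping of powers of $q$ — specifically the fact that $\T_r$ is homogeneous of degree $2r-1$ in $\s$ whereas $\|(\nabla\s)^r\|^2$ is homogeneous of degree $2r$ — so that the various rescalings conspire to leave the equation form-invariant apart from the explicit $q^{-2}$ in the coefficient. Alternatively one could argue geometrically, noting that the fibrewise dilation $v\mapsto v/q$ carries $\SS\EE(q)$ diffeomorphically onto $\SS\EE(1)$ and, while not an isometry of the Sasaki metrics, rescales the vertical part of the metric by the constant $q^{-2}$ while fixing the base and the horizontal distribution; tracing this through the first-variation formula \eqref{eq:firstvarvb} yields the same conclusion. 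The computation with the explicit Euler-Lagrange equations is, however, the most economical route.
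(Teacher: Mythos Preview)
Your argument is correct and uses the same scaling identity \eqref{eq:trscaled} as the paper. The paper's own proof is marginally shorter: rather than matching the explicit Euler-Lagrange equations, it invokes only the first clause of Theorem \ref{thm:hphasecsb} (that $\s$ is $r$-harmonic in $\SS\EE(q)$ iff $\T_r(\s)$ is a pointwise multiple of $\s$), and then observes from \eqref{eq:trscaled} that $\T_r(\s)$ is a multiple of $\s$ iff $\T_r(\hat\s)$ is a multiple of $\hat\s$, since the two sections are themselves proportional. Your version verifies the precise coefficient as well, which is unnecessary but does no harm.
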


\begin{proof}
If $\hat\s=\s/q$ it follows from equation \eqref{eq:trscaled} in the proof of Corollary \ref{cor:hphasecvb} that $\T_r(\s)$ is a multiple of $\s$ if and only if $\T_r(\hat\s)$ is a multiple of $\hat\s$.  
\end{proof}

The following result generalises the well-known characterisation of harmonic maps into spheres \cite{Sm}:

\begin{cor}
\label{cor:hphamapsphere}
A mapping $\v\colon (M,g)\to S^n$ is $r$-harmonic precisely when:
$$
\tr{\newt}\/H_\v+d\v(\diverge\newt)+r\/\norm{(d\v)^r}^2\/\v=0,
$$
where $H_\v$ is the Hessian of $\v$ viewed as a map $M\to\R^{n+1}$ and $\newt=\nu_{r-1}(\v)$.
\end{cor}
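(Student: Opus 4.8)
The plan is to realize the sphere $S^n$ as the unit sphere subbundle $\SS\EE(1)$ of the trivial bundle $\EE = M\times\R^{n+1}\to M$, equipped with the flat connection and the standard fibre metric, and then apply Theorem \ref{thm:hphasecsb} with $q=1$. A mapping $\v\colon(M,g)\to S^n$ is the same data as a section of this sphere subbundle, namely the map $\v$ regarded as an $\R^{n+1}$-valued function of unit length. First I would observe that for the trivial bundle with flat connection, the covariant derivative $\nabla\s$ is just the ordinary differential of $\v\colon M\to\R^{n+1}$, so the vertical Cauchy-Green tensor $\a$ of $\s$ coincides with the tensor $\a$ of the map $\v$ defined in \eqref{eq:cg} (since $\v^*h_{S^n}$ and the pullback of the Euclidean metric agree on vectors tangent to the sphere, and $d\v$ takes values in $T_{\v(x)}S^n$). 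Consequently the vertical Newton tensors $\nu^v_{r-1}(\s)$ equal the Newton tensors $\nu_{r-1}(\v)$, and $\norm{(\nabla\s)^r}^2 = \vrs^v_r(\s) = \vrs_r(\v) = \norm{(d\v)^r}^2$ by \eqref{eq:extpower} and \eqref{eq:srvbdensity}.

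Next I would identify the term $\tr{\newt}\nabla^2\s$ appearing in Theorem \ref{thm:hphasecvb}. For the flat trivial bundle, the second covariant derivative $\nabsq XY\s$ reduces to the ordinary Euclidean Hessian of $\v$ as a map into $\R^{n+1}$, so $\tr{\newt}\nabla^2\s = \tr{\newt}H_\v$, and likewise $\nab{\diverge\newt}\s = d\v(\diverge\newt)$. Thus Theorem \ref{thm:hphasecvb} gives $\T_r(\s) = \tr{\newt}H_\v + d\v(\diverge\newt)$, and Theorem \ref{thm:hphasecsb} states that $\s$ is an $r$-harmonic section of $\SS\EE(1)$ precisely when
$$
\tr{\newt}H_\v + d\v(\diverge\newt) = -r\,\norm{(d\v)^r}^2\,\v,
$$
which rearranges to the stated equation. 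The final point to nail down is that $r$-harmonicity of $\v$ as a \emph{map} $M\to S^n$ coincides with $r$-harmonicity of $\s$ as a \emph{section} of $\SS\EE(1)$; this holds because the sphere subbundle of the trivial bundle over $M$ is just $M\times S^n$ with the product metric, its sections are exactly the mappings $M\to S^n$, variations through sections are exactly homotopies of such mappings, and the $r$-th vertical energy $\Evr(\s)$ of the section equals the $r$-th energy $\E_r(\v)$ of the map (both being $\tfrac12\int_M\vrs_r(\v)\volume(g)$).

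The main obstacle, modest as it is, is the careful bookkeeping identifying the bundle-theoretic objects of Section \ref{sec:vbundles} with the map-theoretic objects of Section \ref{sec:hp} in the special case of the trivial bundle: one must check that the Sasaki metric on $M\times\R^{n+1}$ restricts on $M\times S^n$ to the product metric $g\times h_{S^n}$ (so that horizontality, vertical derivatives, and energies genuinely agree), and that the canonical isomorphism $\iota$ becomes the identity under the natural identification $\s\inv\VV \cong M\times\R^{n+1}$. Once these identifications are in place, the corollary is an immediate transcription of Theorem \ref{thm:hphasecsb}. I would also remark that when $r=1$, since $\nu_0(\v)=1$ and $\diverge\nu_0(\v)=0$, the equation collapses to $\trace H_\v + \norm{d\v}^2\v = 0$, recovering Smith's classical characterisation \cite{Sm}.
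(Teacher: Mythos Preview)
Your proposal is correct and follows precisely the approach the paper intends: the corollary is stated immediately after Theorem \ref{thm:hphasecsb} with no separate proof, and the introduction explicitly describes it as the ``untwisted version'' of that theorem, i.e.\ the specialisation to the trivial flat bundle $M\times\R^{n+1}$ whose unit sphere subbundle is $M\times S^n$. Your identifications of $\nabla\s$ with $d\v$, of $\nabla^2\s$ with the Euclidean Hessian $H_\v$, of $\nu_{r-1}^v(\s)$ with $\nu_{r-1}(\v)$, and of the section/map variational problems are exactly the bookkeeping required, and your closing remark recovering the classical $r=1$ case of \cite{Sm} is apt.
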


\begin{note*}
From Corollary \ref{cor:hphasecsb}, a mapping $\v\colon(M,g)\to\R^{n+1}$ of constant length is $r$-harmonic if and only if $\rank\v<r$.
\end{note*}

\section{Higher-power harmonic vector fields on 3-dimensional Lie groups}
\label{sec:3dlie}

Suppose now that $M$ is a 3-dimensional Lie group, henceforward denoted $G$, which for simplicity we assume to be unimodular.  Let $g$ be a left-invariant Riemannian metric, and $\EE=TG$ with the standard Riemannian structure; ie.\ $\<\cdot,\cdot\>=g$ and $\nabla$ is the Levi-Civita connection.  We refer to a $r$-harmonic section of $\EE$ as a \textsl{$r$-harmonic vector field.}  We will restrict attention to \textsl{invariant} (ie.\ left-invariant) vector fields $\s$, which therefore have constant length, and hence may also be regarded as sections of $\SS\EE(q)$ where $q=\abs{\s}$.  By Corollary \ref{cor:hphasecusb} it suffices to consider $q=1$, and therefore confine attention to $r$-harmonic sections of the unit tangent bundle $UG$; we refer to these as \textsl{$r$-harmonic unit vector fields.}

\par
We briefly review the geometry and algebraic structure of $(G,g)$, following \cite{Mil} (see also \cite{MP}).  A choice of orientation determines a unique `cross product' $\times$ on the Lie algebra $\fg$; the \textsl{Lie structure map} $L\colon \fg\to\fg$ is then the unique linear map satisfying: 
\begin{equation}
L(\v\times\psi)=[\v,\psi],
\label{eq:strmap}
\end{equation}
for all $\v,\psi\in\fg$.  Then $L$ is self-adjoint precisely when $G$ is unimodular; a positively-oriented orthonormal eigenbasis $(\s_1,\s_2,\s_3)$ of $\fg$ with $L(\s_i)=\lam_i\s_i$ therefore satisfies the commutation relations:
\begin{equation}
[\s_i,\s_j]=\e_{ijk}\,\lam_k\/\s_k,
\label{eq:commrel}
\end{equation}
where $\e_{ijk}$ is the Levi-Civita symbol.  We refer to the eigenvalues $\lam_i$ as the \textsl{principal structure constants} of $(G,g)$, the eigenvectors $\s_i$ as \textsl{principal structure directions.}  An invariant plane field $\fp\subset\fg$ is a \textsl{principal section} if $\fp$ is spanned by principal directions; such a $\fp$ need not be an eigenspace of $L$.  Since all $2$-dimensional unimodular Lie algebras are abelian, it follows from \eqref{eq:strmap} and \eqref{eq:commrel} that all $2$-dimensional subalgebras $\fh\subset\fg$ are principal sections, characterised by the vanishing of the principal structure constant for the (principal) direction orthogonal to $\fh$.

\par
Being dependent on orientation, the principal structure constants are determined only up to sign, and their relative signs classify $\fg$ algebraically into one of:
\begin{equation}
\fs\fu(2),\quad
\fs\fl(2),\quad
\fe(2),\quad
\fe(1,1),\quad
\nil,\quad
\fa,
\label{eq:unimod6}
\end{equation}
where $\fe(2)$ (resp.\ $\fe(1,1)$) is the Lie algebra of the isometry group of the Euclidean (resp.\ Minkowski) plane, $\nil$ is the Lie algebra of the Heisenberg group and $\fa$ is the $3$-dimensional abelian Lie algebra.  

\par
The  \textsl{Milnor numbers} of $(G,g)$ are defined:
\begin{equation}
\mu_i=\tfrac12(\lam_1+\lam_2+\lam_3)-\lam_i.
\label{eq:mui}
\end{equation}
The Levi-Civita connection is then characterised on principal structure directions:
\begin{equation}
\nab{\s_i}\s_j
=\e_{ijk}\,\mu_i\/\s_k.
\label{eq:3dunimodlc}
\end{equation} 
In particular, the $\s_i$ are geodesic vector fields (although not in general Killing).
If:
$$
\s=a_1\s_1+a_2\s_2+a_3\s_3,\quad a_i\in\R,
$$ 
we define the \textsl{Milnor map} $M\colon\fg\to\fg$ by:
$$
M(\s)=\mu_1a_1\/\s_1+\mu_2\/a_2\/\s_2+\mu_3\/a_3\/\s_3,
$$
noting that $M$ is well-defined (up to orientation) since $\lam_i=\lam_j$ if and only if $\mu_i=\mu_j$.  For any positive integer $r$ we abbreviate the $r$-th iterate of $M$:
\begin{equation}
\s^{(r)}=M^r(\s)
=\mu_1^{\;r}a_1\/\s_1+\mu_2^{\;r}a_2\/\s_2+\mu_3^{\;r}a_3\/\s_3.
\label{eq:phir}
\end{equation}
It then follows from \eqref{eq:3dunimodlc} that:
\begin{equation}
\nab{\v}\s=\v^{(1)}\times\s,
\label{eq:lcunimod}
\end{equation}
for all $\v\in\fg$.

\par
The principal structure directions are also principal Ricci directions, with \textsl{principal Ricci curvatures:}
\begin{equation}
\rho_i=\Ric(\s_i,\s_i)
=2\mu_j\mu_k,
\label{eq:prinric}
\end{equation}
for $\{i,j,k\}=\{1,2,3\}$.  It follows that $\Ric$ is non-degenerate if and only if $M$ is invertible, and the Ricci kernel $\fn$ has dimension $0$, $2$ or $3$; if $2$-dimensional, $\fn$ is a principal section, although not in general a subalgebra or an eigenspace of $L$.  The \textsl{principal sectional curvatures} are:
\begin{equation}
K_{ij}=K(\s_i,\s_j)
=\tfrac12(\rho_i+\rho_j-\rho_k)
=(\mu_i+\mu_j)\mu_k-\mu_i\mu_j,
\label{eq:prinsec}
\end{equation}
and the Riemann tensor is characterised: 
\begin{equation}
R(\s_i,\s_j)\s
=-\e_{ijk}\/K_{ij}\,\s_k\times\s
=K_{ij}(a_j\/\s_i-a_i\/\s_j).
\label{eq:3driem}
\end{equation}  

\par
The following list (\textsl{`Milnor's list'}) summarises the geometric possibilities for each of the six classes of unimodular Lie algebra.  We assume that $\lam_1\geqs\lam_2\geqs\lam_3$, with no fewer $\lam_i$ positive than negative.

\smallskip
\begin{itemize}[leftmargin=3.9em, itemsep=1ex]

\item[$\fa\colon$] all structure constants vanish; all left-invariant metrics are flat. 

\item[$\nil\colon$]  $\lam_1>0$ and $\lam_2=\lam_3=0$.  Then $\rho_2=\rho_3<0$ and $\rho_1=-\rho_2$.

\item[$\fe(1,1)\colon$] $\lam_1>0$, $\lam_2=0$ and $\lam_3<0$.   Then  $\rho_1=-\rho_3$ and $\rho_2<-\abs{\rho_1}$, with $\rho_1=\rho_3=0$ precisely when $\lam_1=-\lam_3$.

\item[$\fe(2)\colon$] $\lam_1,\lam_2>0$ and $\lam_3=0$.  
Then $\rho_1=-\rho_2$ and $-\abs{\rho_1}<\rho_3<0$,
unless $\lam_1=\lam_2$ in which case the metric is flat.

\item[$\fs\fl(2)\colon$]
$\lam_1,\lam_2>0$ and $\lam_3<0$.  Then $\rho_2<0$ and $\rho_1\rho_3\leqs0$, with the $\abs{\rho_i}$ distinct unless:

\par
$\lam_1=\lam_2$ in which case $\rho_1=\rho_2$;

\par
$\lam_1=\lam_2-\lam_3$ in which case $\rho_1=\rho_3=0$.

\item[$\fs\fu(2)\colon$]
$\lam_1,\lam_2,\lam_3>0$.  Then $\rho_1>0$ and $\rho_2\rho_3\geqs0$, with the $\abs{\rho_i}$ distinct and non-zero unless:

\par
$\lam_1=\lam_2=\lam_3$ in which case $\rho_1=\rho_2=\rho_3$;

\par
$\lam_1=\lam_2>\lam_3$ in which case $\rho_1=\rho_2>\rho_3>0$;

\par
$\lam_1>\lam_2=\lam_3$ in which case $\rho_1>\rho_2=\rho_3$, with $\rho_3\neq0$ unless $\lam_1=2\lam_2$;

\par
$\lam_1=\lam_2+\lam_3$ in which case $\rho_2=\rho_3=0$.
\end{itemize}
Notably, metrics with degenerate Ricci curvature occur in every class except $\nil$, but the only non-abelian flat metric occurs when $\fg=\fe(2)$, and this metric is unique up to homothety.  We note also, from \eqref{eq:commrel}, that the derived subalgebra $[\fg,\fg]$ of $\fg=\fe(2)$ is $2$-dimensional; if $G=E(2)$, the Euclidean group, it is the Lie algebra of the translation subgroup.

\begin{thm}\label{thm:3dflat}
Suppose $\s$ is a non-zero invariant vector field on $(G,g)$.  Then:

\begin{itemize}[leftmargin=1.5em, itemsep=0.5ex]
\item[\rm i)]
$\s$ is flat if and only if $(G,g)$ is flat.

\item[\rm ii)]
$\s$ is parallel if and only if
$(G,g)$ is flat and $\s$ is orthogonal to $[\fg,\fg]$.
\end{itemize}
\end{thm}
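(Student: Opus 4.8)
The plan is to reduce everything to the explicit formulae for the Levi-Civita connection, the Riemann tensor and the principal curvatures of $(G,g)$ recorded in \eqref{eq:3dunimodlc}--\eqref{eq:3driem}, together with the characterisation of flatness of a section in Remark \ref{rem:flat} (applied with $\EE=TG$, so that $\Rnabla$ is the Riemann tensor $R$).

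For (i), write $\s=a_1\s_1+a_2\s_2+a_3\s_3$. By Remark \ref{rem:flat}, $\s$ is flat precisely when $R(X,Y)\s=0$ for all $X,Y\in\fg$, and since the principal directions span $\fg$ this is, by \eqref{eq:3driem}, equivalent to $K_{ij}(a_j\s_i-a_i\s_j)=0$ for all $i<j$. One implication is immediate: if $(G,g)$ is flat then every $K_{ij}=0$. For the converse, choose (using $\s\neq0$) an index $m$ with $a_m\neq0$; for each $i\neq m$ the vector $a_i\s_m-a_m\s_i$ is nonzero, so $K_{mi}=0$. Substituting $K_{mi_1}=K_{mi_2}=0$ into $K_{ij}=\tfrac12(\rho_i+\rho_j-\rho_k)$, where $\{i_1,i_2,m\}=\{1,2,3\}$, gives $\rho_m=0$ and $\rho_{i_1}=\rho_{i_2}$; then $\rho_m=2\mu_{i_1}\mu_{i_2}=0$ by \eqref{eq:prinric} forces one of $\mu_{i_1},\mu_{i_2}$ to vanish, and a line of bookkeeping then yields $\rho_{i_1}=\rho_{i_2}=0$ as well. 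Thus all $\rho_i$ vanish, hence all products $\mu_j\mu_k$ vanish by \eqref{eq:prinric}, hence all $K_{ij}$ vanish by \eqref{eq:prinsec}, and so $R\equiv0$ by \eqref{eq:3driem}; that is, $(G,g)$ is flat.

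For (ii), equation \eqref{eq:lcunimod} gives $\nab{\v}\s=M(\v)\times\s$ for all $\v\in\fg$, so $\s$ is parallel if and only if $M(\v)\in\R\s$ for every $\v$, i.e.\ $\operatorname{image}(M)\subseteq\R\s$. Since $\s\neq0$ this forces $\operatorname{rank}M\leqs1$, so at most one Milnor number is nonzero. If all $\mu_i=0$ then $M=0$, so $\s$ is automatically parallel; moreover all $\rho_i=0$ (so $(G,g)$ is flat), and inspection of \eqref{eq:mui} gives all $\lam_i=0$, so $[\fg,\fg]=0$ and $\s\perp[\fg,\fg]$ holds trivially. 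If exactly one, say $\mu_1\neq0$, then $\operatorname{image}(M)=\R\s_1$, so $\s$ parallel $\iff\s\in\R\s_1$; and $\mu_2=\mu_3=0$ forces (again via \eqref{eq:mui}) $\lam_1=0$ and $\lam_2=\lam_3=\mu_1\neq0$, whence $(G,g)$ is flat since all $\rho_i=2\mu_j\mu_k=0$, and $[\fg,\fg]=\operatorname{span}\{\s_2,\s_3\}$ by \eqref{eq:commrel}, so $\s\perp[\fg,\fg]\iff\s\in\R\s_1$. In every case the conditions ``$\s$ parallel'' and ``$(G,g)$ flat and $\s\perp[\fg,\fg]$'' coincide, proving (ii).

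All the identities invoked are recorded in the excerpt, and none of the steps is deep; the only point needing a moment's care is that flatness of $\s$ a priori kills only the two principal sectional curvatures through the ``active'' direction $\s_m$, so one must call on the structure equations (through the $\rho_i$--$\mu_i$ relations \eqref{eq:prinric}) to force the third to vanish too. I do not anticipate a genuine obstacle.
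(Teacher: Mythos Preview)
Your proof is correct and, for part (i), follows essentially the paper's line: flatness of $\s$ forces the two principal sectional curvatures $K_{m,i_1}$, $K_{m,i_2}$ through an ``active'' index $m$ to vanish, whence $\rho_m=0$ and $\rho_{i_1}=\rho_{i_2}$, and then all $\rho_i=0$.  The paper finishes this last step more conceptually by invoking the earlier observation that $\dim\fn\neq 1$ (so one vanishing and two equal principal Ricci curvatures force all three to vanish), whereas you argue it out directly from $\rho_m=2\mu_{i_1}\mu_{i_2}$; both are fine.

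For part (ii) your argument via $\operatorname{image}(M)\subseteq\R\s$ differs mildly from the paper's, which instead computes
\[
\lVert\nabla\s\rVert^2=\textstyle\sum_i\mu_i^{\,2}(\lvert\s\rvert^2-a_i^{\,2})
\]
and reads off the result.  Your route is arguably cleaner, but has one small lacuna: the case analysis covers only $\rank M\leqs 1$, whereas the statement is for arbitrary $(G,g)$.  You should add the one-line observation that if two or more Milnor numbers are nonzero then some $\rho_k=2\mu_i\mu_j\neq 0$ by \eqref{eq:prinric}, so $(G,g)$ is not flat and the right-hand condition fails as well; this closes the equivalence in the remaining case.
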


\begin{proof}
\item{}
i)\;
If $\s\neq0$ is flat then it follows from \eqref{eq:3driem} that at least two principal sectional curvatures vanish.  Then by \eqref{eq:prinsec} two principal Ricci curvatures are equal and the third vanishes.  But $\dim\fn\neq1$ so all the $\rho_i$ vanish, rendering $(G,g)$ flat.  

\noindent
ii)\;
From \eqref{eq:lcunimod} and identities of classical vector algebra:
\begin{equation}
\norm{\nabla\s}^2
=\ts\sum_i\abs{\nab{\s_i}\s}^2
=\sum_i\mu_i^{\;2}\/\abs{\s_i\times\s}^2
=\sum_i\mu_i^{\;2}(\abs{\s}^2-a_i^{\;2}).
\label{eq:normsq}
\end{equation}
In the non-abelian flat case $\mu_1=\mu_2=0$ but $\mu_3\neq0$ (according to `Milnor's list'), so $\s$ is parallel if and only if $\s$ is a multiple of $\s_3$, whereas $[\fg,\fg]$ is generated by $\s_1$ and $\s_2$.  
\end{proof}

It follows from Corollary \ref{cor:hphasecsb} that Theorem \ref{thm:3dflat} identifies all the invariant harmonic vector fields on $G$; equivalently, the invariant zeroes of vertical energy.  We now extend this to higher-power vertical energy.  To clarify terminology:

\begin{defn}\label{defn:ricciflat}
An invariant vector field $\s$ is \textsl{Ricci-flat} if $\s\in\fn$ (rather than the weaker condition $\Ric(\s,\s)=0$).
\end{defn}

\begin{thm}\label{thm:3dzero}
With the same hypotheses as Theorem \ref{thm:3dflat}, $\s$ is a $r$-harmonic vector field in precisely the following cases:

\begin{itemize}[leftmargin=1.5em, itemsep=0.0ex]
\item[\rm a)]
$r=1\colon$ 
$(G,g)$ is flat and $\s$ is orthogonal to $[\fg,\fg]$.

\item[\rm b)]
$r=2\colon$ 
$\s$ is Ricci-flat.

\item[\rm c)]
$r=3\colon$ 
all $\s$.
\end{itemize}
\end{thm}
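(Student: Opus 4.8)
The plan is to reduce everything to a rank count for $\nabla\s$ and then to a single explicit computation. Being invariant, $\s$ has constant length, so by Corollary~\ref{cor:hphasecsb} (applied to $\EE=TG$) the field $\s$ is a $r$-harmonic vector field if and only if it is $r$-parallel, i.e.\ $\rank\nabla\s<r$, equivalently $\vrs_r^v(\s)=\norm{(\nabla\s)^r}^2=0$ by \eqref{eq:srvbdensity}. Two of the three cases are then immediate. For $r=1$, being $1$-parallel means $\nabla\s=0$, i.e.\ $\s$ is parallel, so (a) is precisely Theorem~\ref{thm:3dflat}(ii). For $r=3$, equation \eqref{eq:lcunimod} shows that $\nabla\s$ takes values in the corank-$1$ subbundle $\s^{\perp}$, whence $\rank\nabla\s\leqs2<3$ everywhere (this is also the final clause of Corollary~\ref{cor:hphasecsb}), so every invariant $\s$ is $3$-harmonic, which is (c).

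The whole content is therefore the case $r=2$, which I would settle by computing the vertical Cauchy--Green tensor $\a$ of $\s$ in the principal orthonormal frame $(\s_1,\s_2,\s_3)$. Normalising $\abs{\s}=1$ and writing $\s=\sum_ia_i\s_i$ (so $\sum_ia_i^{\;2}=1$), equation \eqref{eq:3dunimodlc} gives $\nab{\s_i}\s=\mu_i\,\s_i\times\s$, and the Lagrange identity $\<u\times w,v\times w\>=\<u,v\>\abs{w}^2-\<u,w\>\<v,w\>$ together with \eqref{eq:vcgvb} yields
\[
g(\a(\s_i),\s_j)=\mu_i\mu_j\bigl(\d_{ij}-a_ia_j\bigr).
\]
Substituting $\a_{ii}=\mu_i^{\;2}(1-a_i^{\;2})$ and $\a_{ij}=-\mu_i\mu_ja_ia_j$ for $i\neq j$ into $\vrs_2^v(\s)=\sum_{i<j}(\a_{ii}\a_{jj}-\a_{ij}^{\;2})$ (cf.\ \eqref{eq:sigma2}) and using $1-a_i^{\;2}-a_j^{\;2}=a_k^{\;2}$, every summand collapses to $\mu_i^{\;2}\mu_j^{\;2}a_k^{\;2}$; hence, invoking $\rho_i=2\mu_j\mu_k$ from \eqref{eq:prinric},
\[
\vrs_2^v(\s)=\mu_2^{\;2}\mu_3^{\;2}a_1^{\;2}+\mu_1^{\;2}\mu_3^{\;2}a_2^{\;2}+\mu_1^{\;2}\mu_2^{\;2}a_3^{\;2}=\tfrac14\bigl(\rho_1^{\;2}a_1^{\;2}+\rho_2^{\;2}a_2^{\;2}+\rho_3^{\;2}a_3^{\;2}\bigr).
\]

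From this sum-of-squares expression, $\vrs_2^v(\s)=0$ if and only if $\rho_ia_i=0$ for every $i$, i.e.\ if and only if $\Ric(\s)=0$, i.e.\ if and only if $\s$ lies in the Ricci kernel $\fn$ --- which by Definition~\ref{defn:ricciflat} is exactly the condition that $\s$ be Ricci-flat, giving (b). The only point I expect to require care is checking that this single criterion is uniformly correct across ``Milnor's list'', notably in the degenerate cases where $\fn=\fg$ (so that ``Ricci-flat'' means ``all $\s$'') and the flat cases; the homogeneous quadratic form above disposes of these cleanly and is really the heart of the argument. An alternative, computation-light route writes $\nabla\s=(\,\cdot\times\s)\circ M$ and uses $\rank\nabla\s=\rank M-\dim(\R\s\cap\operatorname{im}M)$, but it then needs a four-way split on $\rank M\in\{0,1,2,3\}$ together with the identification $\operatorname{im}M=\fn$ in the case $\rank M=2$, so I would prefer the direct computation.
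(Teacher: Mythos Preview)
Your argument is correct and follows essentially the same route as the paper: reduce via Corollary~\ref{cor:hphasecsb} to $r$-parallelism, dispatch $r=1$ by Theorem~\ref{thm:3dflat}(ii) and $r=3$ by the rank bound, and for $r=2$ compute $\vrs_2^v(\s)=\norm{\nabla\s\wedge\nabla\s}^2$ explicitly to obtain $\tfrac14\abs{\Ric(\s)}^2$ (up to a factor $\abs{\s}^2$). The only cosmetic difference is that the paper simplifies each summand geometrically via the vector triple product, writing $\abs{\nab{\s_i}\s}^2\abs{\nab{\s_j}\s}^2-\<\nab{\s_i}\s,\nab{\s_j}\s\>^2=\mu_i^{\,2}\mu_j^{\,2}\abs{(\s_i\times\s)\times(\s_j\times\s)}^2=\mu_i^{\,2}\mu_j^{\,2}\<\s,\s_i\times\s_j\>^2\abs{\s}^2$, whereas you first read off the matrix entries $\a_{ij}=\mu_i\mu_j(\d_{ij}-a_ia_j)$ and then expand the $2\times2$ minors; since $\a_{ij}=\<\nab{\s_i}\s,\nab{\s_j}\s\>$ these are literally the same summands, and both collapse to $\mu_i^{\,2}\mu_j^{\,2}a_k^{\,2}$.
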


\begin{proof}
Case (a) follows from Theorem \ref{thm:3dflat}\,(ii), whereas (c) follows from Corollary \ref{cor:hphasecsb} since $TG$ has rank 3.  By Corollary \ref{cor:hphasecsb}, $\s$ is a $2$-harmonic vector field precisely when $\s$ is $2$-parallel, and using \eqref{eq:lcunimod} and \eqref{eq:prinric} along with various well-known identities of classical vector algebra we compute:
\begin{align}
\norm{\nabla\s\wedge\nabla\s}^2
&=\ts\sum_{i<j}
\bigl(\abs{\nab{\s_i}\s}^2 \abs{\nab{\s_j}\s}^2
-\<\nab{\s_i}\s,\nab{\s_j}\s\>^2\bigr)  
\notag \\[1ex]
&=\ts\sum_{i<j}\mu_i^{\;2}\mu_j^{\;2}
\bigl(\abs{\s_i\times\s}^2\,\abs{\s_j\times\s}^2
-\<\s_i\times\s,\s_j\times\s\>^2\bigr) 
\notag \\[1ex]
&=\ts\sum_{i<j}\mu_i^{\;2}\mu_j^{\;2}
\abs{(\s_i\times\s)\times(\s_j\times\s)}^2 
\notag \\[1ex]
&=\ts\sum_{i<j}\mu_i^{\;2}\mu_j^{\;2}
\<\s,\s_i\times\s_j\>^2\/\abs{\s}^2 
\notag \\[1ex]
&=\tfrac14(\rho_1^{\;2}a_1^{\;2}
+\rho_2^{\;2}a_2^{\;2}
+\rho_3^{\;2}a_3^{\;2})\abs{\s}^2 
=\tfrac14 \abs{\s}^2\,\abs{\Ric(\s)}^2.
\label{eq:2wedge}
\end{align}
Thus $\s$ is 2-parallel precisely when $\Ric(\s)=0$.
\end{proof}

\begin{rems}\label{rems:3dzero}
\item{}
\begin{itemize}[leftmargin=1.4em]
\item[1)]
Non-trivial $2$-harmonic invariant vector fields exist only if the Ricci curvature degenerates.  For example, it follows from `Milnor's list' that there are no such vector fields if $\fg=\nil$, or $\fg=\fe(2)$ with non-flat metric.

\item[2)]
If $\s$ is Ricci-flat then the image of $\nabla\s$ is orthogonal to $\fn$, which since $\dim\fn\neq1$ therefore lies in a fixed (ie.\ independent of $\s$) rank 1 subbundle.  This geometric `quirk' explains the linearity of the space of $2$-harmonic invariant vector fields, contrary to general expectation (Remark \ref{rem:rpowerspace}).
\end{itemize}
\end{rems}

We now consider the invariant $r$-harmonic unit vector fields.  When $r=1$ these were classified in \cite{GDV}, which we revisit in Theorem \ref{thm:1pharmsb} below. When $r=3$ the classification is tautologous: the $3$-rd vertical energy is zero since the fibres of $UG$ are $2$-dimensional. The outstanding case ($r=2$) requires the $1$-st vertical Newton tensor, and its divergence.

\begin{thm}\label{thm:newton}
Let $\newt=\nu_1^v(\s)$ for an invariant unit vector field $\s$. 
For all $\v\in\fg$:

\begin{itemize}[leftmargin=1.5em, itemsep=1ex]
\item[\rm i)]
$\newt(\v)=\norm{M}^2\/\v-\v^{(2)}+\<\v,\s^{(1)}\>\s^{(1)}-\abs{\s^{(1)}}^2\v$.

\item[\rm ii)]
$\diverge \newt=\s^{(2)}\times \s^{(1)}$.
\end{itemize}

\noindent
Then $\newt$ is solenoidal in precisely the following situations:

\begin{itemize}[leftmargin=1.5em, itemsep=0.5ex]
\item[\rm a)]
$\s$ is a principal structure direction;

\item[\rm b)]
$\s$ is flat;

\item[\rm c)]
$\s$ lies in a principal section orthogonal to the Ricci kernel $\fn$, when $\dim\fn=2$.
\end{itemize}
\end{thm}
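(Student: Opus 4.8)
The plan is to push the explicit Levi-Civita formula \eqref{eq:lcunimod} through elementary vector algebra, in the style of the proof of Theorem \ref{thm:3dzero}.

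\textbf{Part (i).} First I would compute the vertical Cauchy--Green tensor $\a$ of $\s$. By \eqref{eq:vcgvb} and \eqref{eq:lcunimod}, $g(\a(\v),\psi)=\<\v^{(1)}\times\s,\psi^{(1)}\times\s\>$, and Lagrange's identity $\<\v_1\times\w_1,\v_2\times\w_2\>=\<\v_1,\v_2\>\<\w_1,\w_2\>-\<\v_1,\w_2\>\<\v_2,\w_1\>$ together with $\abs\s=1$ gives $g(\a(\v),\psi)=\<\v^{(1)},\psi^{(1)}\>-\<\v^{(1)},\s\>\<\psi^{(1)},\s\>$. Since the Milnor map is self-adjoint, $\a(\v)=\v^{(2)}-\<\v,\s^{(1)}\>\s^{(1)}$; taking the trace in the eigenframe $(\s_1,\s_2,\s_3)$ yields $\vrs_1^v(\s)=\trace\a=\norm{M}^2-\abs{\s^{(1)}}^2$, in agreement with \eqref{eq:normsq}. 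Then (i) is immediate from $\newt=\nu_1^v(\s)=\upchi_1(\a)=(\trace\a)\,1-\a$.

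\textbf{Part (ii).} Since the $\s_i$ are geodesic vector fields (a consequence of \eqref{eq:3dunimodlc}), the divergence reduces to $\diverge\newt=\ts\sum_i\nab{\s_i}(\newt(\s_i))$. Writing $\s=\ts\sum_j a_j\s_j$, formula (i) gives $\newt(\s_i)=c_i\,\s_i+\mu_i a_i\,\s^{(1)}$ with $c_i=\norm{M}^2-\mu_i^{\;2}-\abs{\s^{(1)}}^2\in\R$; the term $c_i\nab{\s_i}\s_i$ vanishes, while \eqref{eq:lcunimod} applied to the invariant field $\s^{(1)}$ gives $\nab{\s_i}\s^{(1)}=\s_i^{(1)}\times\s^{(1)}=\mu_i\,\s_i\times\s^{(1)}$. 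Hence $\nab{\s_i}(\newt(\s_i))=\mu_i^{\;2}a_i\,\s_i\times\s^{(1)}$, and summing, $\diverge\newt=\bigl(\ts\sum_i\mu_i^{\;2}a_i\s_i\bigr)\times\s^{(1)}=\s^{(2)}\times\s^{(1)}$.

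\textbf{The solenoidal cases.} By (ii), $\newt$ is solenoidal exactly when $\s^{(1)}$ and $\s^{(2)}=M\s^{(1)}$ are linearly dependent, i.e.\ $\s^{(1)}\times\s^{(2)}=0$; in the coordinates $\s=\ts\sum_i a_i\s_i$ the $\s_k$-component of this cross product is $\mu_i\mu_j\,a_i a_j(\mu_j-\mu_i)$ for $(i,j,k)$ a cyclic permutation of $(1,2,3)$, so the condition is the vanishing of all three. I would then read off the cases from Milnor's list and \eqref{eq:prinric}: if the $a_i$-support is a single principal direction all three products vanish (case (a), noting that when two $\lam_i$ coincide the whole eigenplane consists of principal directions); if $(G,g)$ is flat then at most one $\mu_i$ is non-zero (abelian, or $\mu_1=\mu_2=0\neq\mu_3$ in the non-abelian case, as recorded in the proof of Theorem \ref{thm:3dflat}), so $\s^{(1)}$ and $\s^{(2)}$ are supported on a single principal direction and are collinear (case (b), using Theorem \ref{thm:3dflat}\,(i)); and if $\dim\fn=2$ then \eqref{eq:prinric} forces exactly one $\mu_\ell$ to vanish, a principal section orthogonal to $\fn$ is spanned by $\s_\ell$ together with a second principal direction, and for $\s$ in such a plane $\s^{(2)}=M\s^{(1)}$ is again a scalar multiple of $\s^{(1)}$ (case (c)). Conversely, if none of (a)--(c) holds then two of the $a_i$ are non-zero, at most one $\mu_i$ vanishes, and the corresponding product above is non-zero, so $\newt$ is not solenoidal.

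The two computational steps are routine; the real work is the last one --- the bookkeeping that converts $\s^{(1)}\times\s^{(2)}=0$ into the trichotomy (a)--(c) and verifies exhaustiveness across all the degenerate metrics on Milnor's list (repeated principal structure constants, degenerate Ricci curvature). I expect that to be the main obstacle.
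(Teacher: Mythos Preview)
Your proof is correct and follows essentially the same route as the paper: the computations for (i) and (ii) are identical (vertical Cauchy--Green tensor via Lagrange's identity, then $\newt=\vrs_1^v(\s)-\a$; divergence via $\nab{\s_i}(\newt\s_i)=\mu_i\,\s_i\times\newt\s_i$ and summation), and your key observation for the solenoidal analysis---that $\diverge\newt=0$ iff $\s^{(1)}$ and $M\s^{(1)}=\s^{(2)}$ are collinear, i.e.\ $M\s$ is an eigenvector of $M$---is exactly the paper's. The only difference is cosmetic: the paper dispatches the trichotomy by casing on the invertibility of $M$ (equivalently the dimension of $\fn$) rather than writing out the coordinate products $\mu_i\mu_j a_i a_j(\mu_j-\mu_i)$, which makes the converse slightly cleaner but is not a different argument.
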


\begin{note*}
Conditions (a), (b) and (c) are not mutually exclusive.
\end{note*}

\begin{proof}  
\item{}
i)\;
From \eqref{eq:newtrec}:
\begin{equation}
\newt
=\vrs_1^v(\s)-\a,
\label{eq:firstvnewt}
\end{equation}
where $\a$ is the vertical Cauchy-Green tensor.  By \eqref{eq:vcgvb} and \eqref{eq:lcunimod}, for all $\v,\psi\in\fg$:
\begin{align*}
\<\a(\v),\psi\>
=\<\nab\v\s,\nab\psi\s\>
&=\<\v^{(1)}\times\s,\psi^{(1)}\times\s\> \\
&=\<\v^{(1)},\psi^{(1)}\>-\<\v^{(1)},\s\>\<\psi^{(1)},\s\>,
\quad\text{since $\abs{\s}=1$} \\
&=\<\v^{(2)},\psi\>-\<\v,\s^{(1)}\>\<\s^{(1)},\psi\>.
\end{align*}
Hence:
\begin{equation}
\a(\v)=\v^{(2)}-\<\v,\s^{(1)}\>\s^{(1)}.
\label{eq:cgunimod}
\end{equation}
Furthermore, from \eqref{eq:srvbdensity} and \eqref{eq:normsq}:
\begin{equation}
\vrs_1^v(\s)
=\norm{\nabla\s}^2
=\norm{M}^2-\abs{\s^{(1)}}^2.
\label{eq:bend}
\end{equation}

\smallskip\noindent
ii)\;
Since the $\s_i$ are geodesic, by Definition \ref{defn:twistrace}:
\begin{align*}
\diverge \newt
=\ts\sum_i\nab{\s_i}(\newt \s_i)
&=\ts\sum_i \mu_i\s_i\times \newt \s_i,
\quad\text{by \eqref{eq:lcunimod}} 
\\[0.5ex]
&=\ts\sum_i \<\s_i,\s^{(2)}\>(\s_i\times\s^{(1)}),
\quad\text{by (i)} \\[0.5ex]
&=\s^{(2)}\times\s^{(1)}.
\end{align*}
It follows that $\diverge \newt=0$ if and only if $M\s$ is an eigenvector of $M$.  Since $M$ has the same eigenspaces as $L$, this is the case if $\s$ is a principal structure direction, and the converse holds if the Ricci curvature is non-degenerate (since $M$ is then invertible).  At the other extreme, if $(G,g)$ is flat then at least two Milnor numbers vanish, so $\diverge \newt=0$ for all $\s$; by Theorem \ref{thm:3dflat}\,(i) this is the case precisely when $\s$ is flat.  If $\dim\fn=2$ then exactly one Milnor number vanishes, say $\mu_k$, and $\diverge \newt=0$ precisely when $\s$ lies in a principal section containing $\s_k$. 
\end{proof}

The following expression for the second covariant derivative is a straightforward consequence of equations \eqref{eq:2cov} and \eqref{eq:lcunimod}. 

\begin{lem}\label{lem:2deriv}
For all $\v,\psi\in\fg$ we have:
$$
\nabsq{\v}{\psi}\s
=\<\v^{(1)},\s\>\psi^{(1)}-\<\v^{(1)},\psi^{(1)}\>\s
-(\v^{(1)}\times\psi)^{(1)}\times\s.
$$
\end{lem}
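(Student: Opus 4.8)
The plan is to compute $\nabsq{\v}{\psi}\s$ directly from its definition \eqref{eq:2cov} as $\nab\v(\nab\psi\s)-\nab{\nab\v\psi}\s$, feeding in the connection formula \eqref{eq:lcunimod} twice and then collapsing the resulting iterated cross products with the Grassmann identity. First I would note that \eqref{eq:lcunimod}, although recorded for the distinguished field $\s$, holds with \emph{any} invariant vector field in the slot being differentiated: writing $\tau=\sum_j b_j\s_j$ and $\v=\sum_i a_i\s_i$, bilinearity of $\nabla$ together with \eqref{eq:3dunimodlc} gives $\nab\v\tau=\sum_{i,j}a_ib_j\,\e_{ijk}\mu_i\s_k=\v^{(1)}\times\tau$. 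Since $M$ preserves $\fg$ and the cross product of two invariant fields is again invariant, this applies in particular to $\tau=\psi$, to $\tau=\psi^{(1)}$, and to $\tau=\psi^{(1)}\times\s$.

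With this in hand the two terms of \eqref{eq:2cov} are immediate. For the first, $\nab\psi\s=\psi^{(1)}\times\s$ by \eqref{eq:lcunimod}, and applying the connection formula once more --- legitimate by the remark above, since $\psi^{(1)}\times\s$ is invariant --- gives $\nab\v(\nab\psi\s)=\v^{(1)}\times(\psi^{(1)}\times\s)$. For the second, $\nab\v\psi=\v^{(1)}\times\psi$, whence $\nab{\nab\v\psi}\s=(\v^{(1)}\times\psi)^{(1)}\times\s$. Subtracting yields $\nabsq{\v}{\psi}\s=\v^{(1)}\times(\psi^{(1)}\times\s)-(\v^{(1)}\times\psi)^{(1)}\times\s$, and expanding the first term by the vector triple product $a\times(b\times c)=\<a,c\>\/b-\<a,b\>\/c$ produces $\<\v^{(1)},\s\>\psi^{(1)}-\<\v^{(1)},\psi^{(1)}\>\s$, which is precisely the claimed expression.

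There is essentially no obstacle here: the argument is two applications of \eqref{eq:lcunimod} and one triple-product expansion. The only step that genuinely needs a word of justification --- and the only place one could slip --- is the extension of \eqref{eq:lcunimod} to an arbitrary invariant field in the differentiated slot, which is what licenses differentiating $\psi^{(1)}\times\s$ directly; everything else is the routine classical vector algebra already used repeatedly in this section.
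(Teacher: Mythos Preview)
Your argument is correct and is exactly the computation the paper has in mind: it states the lemma as ``a straightforward consequence of equations \eqref{eq:2cov} and \eqref{eq:lcunimod},'' and what you have written is precisely that computation, two applications of the connection formula followed by a single Grassmann expansion. Your explicit observation that \eqref{eq:lcunimod} extends to arbitrary invariant fields in the differentiated slot is the one point worth making and you have handled it correctly.
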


\begin{thm}\label{thm:2pharmsb}
Under the same hypotheses as Theorem \ref{thm:3dzero}, with $\s$ of unit length:
$$
-4\,\T_2(\s)=\abs{\Ric(\s)}^2\s+\Ric^2(\s),
$$
where $\Ric^2$ denotes the iterated Ricci endomorphism.  Then $\s$ is a $2$-harmonic unit vector field precisely when any of the following hold:
\begin{itemize}[leftmargin=1.5em, itemsep=0.5ex]
\item[\rm a)]
$\s$ is a principal structure direction;

\item[\rm b)]
$\s$ lies in a $2$-dimensional subalgebra;

\item[\rm c)]
$\s$ is Ricci-flat.
\end{itemize}
\end{thm}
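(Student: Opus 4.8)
The plan is to compute $\T_2(\s)$ explicitly from Theorem \ref{thm:hphasecvb}, which gives $\T_2(\s)=\tr{\newt}\nabla^2\s+\nab{\diverge\newt}\s$ with $\newt=\nu_1^v(\s)$. Both ingredients are now at hand: part (i) of Theorem \ref{thm:newton} gives $\newt$, part (ii) gives $\diverge\newt=\s^{(2)}\times\s^{(1)}$, and Lemma \ref{lem:2deriv} gives $\nabsq{\v}{\psi}\s$ for $\v,\psi\in\fg$. So the first step is purely algebraic: substitute into $\tr{\newt}\nabla^2\s=\sum_i\nabsq{e_i}{\newt e_i}\s$ (taking $\{e_i\}=\{\s_1,\s_2,\s_3\}$, which is legitimate since these are orthonormal) and into $\nab{\diverge\newt}\s=(\diverge\newt)^{(1)}\times\s$ via \eqref{eq:lcunimod}. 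I would expand everything in the principal frame, writing $\s=\sum a_i\s_i$, $\mu_i$ for the Milnor numbers, $\rho_i=2\mu_j\mu_k$ for the principal Ricci curvatures, and use $\abs{\s}=1$ freely. The target identity $-4\,\T_2(\s)=\abs{\Ric(\s)}^2\s+\Ric^2(\s)$ has a clean component form: the $\s_i$-component of the right side is $(\sum_j\rho_j^2a_j^2)a_i+\rho_i^2a_i$, so the whole verification reduces to checking this coefficient-by-coefficient, which should fall out after routine simplification using $\rho_i=2\mu_j\mu_k$ and the symmetric-function identities among the $\mu_i$.

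Having established the formula for $\T_2(\s)$, the second step characterises the $2$-harmonic unit vector fields via Theorem \ref{thm:hphasecsb}: $\s$ is $2$-harmonic in $UG$ iff $\T_2(\s)$ is a pointwise multiple of $\s$, i.e.\ iff $\Ric^2(\s)$ is a multiple of $\s$ (since $\abs{\Ric(\s)}^2\s$ already is). Because the $\s_i$ are principal Ricci directions, $\Ric^2(\s)=\sum_i\rho_i^2a_i\s_i$, so the condition becomes: $(\rho_i^2-c)a_i=0$ for all $i$ and some constant $c$. This is an eigenvector condition for the endomorphism $\mathrm{diag}(\rho_1^2,\rho_2^2,\rho_3^2)$, so the analysis splits by how many of the $\rho_i^2$ coincide. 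The remaining work is to translate each algebraic case into the three geometric conditions (a), (b), (c) in the statement, consulting Milnor's list: case (a) $\s$ a principal direction is immediate (then only one $a_i$ is nonzero); for the degenerate cases one needs to identify when $\s$ can be a genuine eigenvector of $\mathrm{diag}(\rho_i^2)$ without being a principal direction — this forces $\rho_i^2=\rho_j^2$ for the two active indices, hence either $\rho_i=\rho_j$ (two equal Ricci curvatures, in which case $\s$ lying in the corresponding coordinate $2$-plane works, and one checks via Milnor's list that this plane is a subalgebra, giving (b)) or $\rho_i=-\rho_j$ (in which case, if both are nonzero, $c=\rho_i^2>0$ but then the third coefficient forces $a_k=0$, reducing to the $2$-plane case again; if both are zero we land in (c)) — and case (c) Ricci-flat, where $\Ric(\s)=0$ so $\T_2(\s)=0$ trivially.

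The main obstacle I anticipate is not the differential-geometric input, which is essentially mechanical given Lemma \ref{lem:2deriv} and Theorem \ref{thm:newton}, but rather the bookkeeping in the final case analysis: one must show that the list (a), (b), (c) is \emph{exactly} the solution set, with no spurious solutions arising from coincidences among the $\rho_i^2$ that are not coincidences among the $\rho_i$ themselves. Concretely, the sign ambiguity $\rho_i=-\rho_j$ (which does occur — e.g.\ $\rho_1=-\rho_2$ in $\nil$, $\fe(2)$, $\fe(1,1)$, and parts of $\fs\fl(2)$) must be handled carefully: there one needs the third component of the eigenvector equation, $(\rho_k^2-c)a_k=0$ with $c=\rho_i^2=\rho_j^2>0$, to conclude $a_k=0$ (using that $\abs{\rho_k}$ is distinct from $\abs{\rho_i}$ in those geometries, per Milnor's list), so that $\s$ in fact lies in the $\{\s_i,\s_j\}$-plane; and then one verifies from \eqref{eq:commrel} that this plane is a $2$-dimensional subalgebra precisely because the structure constant $\lam_k$ vanishes in exactly those cases — tying back to the characterisation of $2$-dimensional subalgebras as principal sections given in the paragraph after \eqref{eq:3driem}. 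Assembling these threads so that (a), (b), (c) cover every geometry on Milnor's list without overlap-induced gaps is the delicate part; the computation of $\T_2(\s)$ itself should be the easy part.
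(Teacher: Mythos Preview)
Your computational strategy for $\T_2(\s)$ is essentially the paper's: assemble $\tr{\newt}\nabla^2\s+\nab{\diverge\newt}\s$ from Theorem~\ref{thm:newton} and Lemma~\ref{lem:2deriv}, then reduce to the eigenvector condition for $\Ric^2$ via Theorem~\ref{thm:hphasecsb}. That part is fine.

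The gap is in your case analysis. You claim that $\rho_i=\rho_j$ (nonzero) forces the $\{\s_i,\s_j\}$-plane to be a subalgebra, yielding~(b). This is false. From \eqref{eq:prinric}, $\rho_i=\rho_j$ means $\mu_k(\mu_i-\mu_j)=0$; if $\mu_k\neq0$ then $\mu_i=\mu_j$, hence $\lam_i=\lam_j$ by \eqref{eq:mui}, so the plane is an \emph{eigenspace of $L$} and every $\s$ in it is a principal structure direction --- case~(a), not~(b). (Concretely: in $\fs\fu(2)$ with $\lam_1=\lam_2>\lam_3$ one has $\rho_1=\rho_2>0$, yet $[\s_1,\s_2]=\lam_3\s_3\neq0$, so the plane is not a subalgebra.) Conversely, the subalgebra case~(b) arises from $\rho_i=-\rho_j\neq0$: then $\mu_i=-\mu_j$ (since $\mu_k=0$ would force both $\rho_i,\rho_j$ to vanish), whence $\lam_k=\mu_i+\mu_j=0$ and the plane is abelian. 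So you have the geometric labels for the two sign cases swapped.

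The paper sidesteps this sign bookkeeping entirely by factoring
\[
\rho_i^{\,2}-\rho_j^{\,2}=4(\mu_j^{\,2}-\mu_i^{\,2})\mu_k^{\,2},
\qquad
\mu_i^{\,2}-\mu_j^{\,2}=(\lam_j-\lam_i)\lam_k,
\]
which immediately gives $\rho_i^{\,2}=\rho_j^{\,2}$ iff $\mu_k=0$ (Ricci-flat,~(c)) or $\lam_i=\lam_j$ (principal direction,~(a)) or $\lam_k=0$ (subalgebra,~(b)). This is both shorter and safer than trawling Milnor's list, and handles the three-way coincidence $\rho_1^{\,2}=\rho_2^{\,2}=\rho_3^{\,2}$ (which you do not address) without extra work.
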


\begin{proof}
We develop the expression for $\T_2(\s)$ from Theorem \ref{thm:hphasecvb}. By Theorem \ref{thm:newton}\,(i):
$$
\tr{\newt}\nabla^2\s
=\norm{M}^2 \trace\nabla^2\s
-\tr{M^2}\nabla^2\s
+\nabsq{M\s}{M\s}\s
-\abs{\s^{(1)}}^2\trace\nabla^2\s.
$$
Now by Lemma \ref{lem:2deriv}:
$$
\nabsq{\s_i}{\s_i}\s
=\mu_i^{\;2}a_i\/\s_i-\mu_i^{\;2}\s, 
$$
so:
\begin{align}
\trace\nabla^2\s
&=\ts\sum_i \nabsq{\s_i}{\s_i}\s
=\s^{(2)}-\norm{M}^2\/\s,
\label{eq:roughlap}
\intertext{and:}
\tr{M^2}\nabla^2\s
&=\ts\sum_i \mu_i^{\,2}\,\nabsq{\s_i}{\s_i}\s
=\s^{(4)}-\norm{M}^4\/\s+2\/\vrs_2(M^2)\/\s,
\notag
\end{align}
where $\vrs_2(M^2)$ is simply the second elementary symmetric polynomial in $\mu_1^{\;2},\mu_2^{\;2},\mu_3^{\;2}$.
By Lemma \ref{lem:2deriv} again:
$$
\nabsq{M\s}{M\s}\s
=\abs{\s^{(1)}}^2\,\s^{(2)}
-\abs{\s^{(2)}}^2\/\s
-(\s^{(2)}\times\s^{(1)})^{(1)}\times\s.
$$ 
Applying Theorem \ref{thm:newton}\,(ii) and equation \eqref{eq:lcunimod} yields:
$$
\T_2(\s)
=B(\s)\/\s+\norm{M}^2\/\s^{(2)}-\s^{(4)},
$$
where:
$$
B(\s)=\norm{M}^2\,\abs{\s^{(1)}}^2-\abs{\s^{(2)}}^2
-2\/\vrs_2(M^2),
$$
after (somewhat remarkably) three pairs of cancellations, including all terms involving $\norm{M}^4$ and $\diverge \newt$.  Now:
\begin{align*}
B(\s)
&=\ts\sum_{i,j} \mu_i^{\;2}\mu_j^{\;2} a_j^{\;2}
-\ts\sum_i \mu_i^{\;4} a_i^{\;2}
-\ts\sum_{i\neq j} \mu_i^{\;2}\mu_j^{\;2} \\[0.5ex]
&=\ts\sum_{i\neq j} \mu_i^{\;2}\mu_j^{\;2}(a_j^{\;2}-1) \\[0.5ex]
&=\tfrac14\ts\sum_{i\neq j} \rho_i^{\;2}(a_j^{\;2}-1),
\quad\text{by \eqref{eq:prinric}} \\[0.5ex]
&=-\tfrac14\ts\sum_i \rho_i^{\;2}(a_i^{\;2}+1),
\quad\text{since $a_1^{\;2}+a_2^{\;2}+a_3^{\;2}=1$} \\[0.5ex]
&=-\tfrac14\norm{\Ric}^2-\tfrac14\abs{\Ric(\s)}^2.
\end{align*}
Furthermore:
\begin{align*}
\norm{M}^2\/\s^{(2)}-\s^{(4)}
&=\ts\sum_{i\neq j} \mu_i^{\;2}\mu_j^{\;2}a_j\/\s_j 
=\tfrac14 \ts\sum_{i\neq j} \rho_i^{\;2}a_j\/\s_j \\[0.5ex]
&=\tfrac14(\ts\sum_i \rho_i^{\;2})\s
-\tfrac14\ts\sum_i\rho_i^{\;2}a_i\/\s_i \\[0.5ex]
&=\tfrac14\norm{\Ric}^2\s-\tfrac14\Ric^2(\s).
\end{align*}
Terms involving $\norm{\Ric}$ cancel, leaving the stated formula for $\T_2(\s)$.  

\par
It follows from Theorem \ref{thm:hphasecsb} that $\s$ is a 2-harmonic unit vector field precisely when $\s$ is an eigenvector of $\Ric^2$.  The characterisation of the eigenspaces of $\Ric^2$ follows from the identities:
\begin{equation}
\rho_i^{\;2}-\rho_j^{\;2}
=4(\mu_j^{\;2}-\mu_i^{\;2})\mu_k^{\;2}
\label{eq:modrho}
\end{equation}
and:
\begin{equation}
\mu_i^{\;2}-\mu_j^{\;2}
=(\lam_j-\lam_i)\lam_k,
\label{eq:modmu}
\end{equation}
for $\{i,j,k\}=\{1,2,3\}$, bearing in mind equations \eqref{eq:prinric} and \eqref{eq:commrel}.
\end{proof}

For comparison, we give a similar characterisation of the invariant harmonic unit vector fields on $G$.   This is simply a consequence of equation \eqref{eq:roughlap} in the proof of Theorem \ref{thm:2pharmsb}, and Theorem \ref{thm:hphasecsb} (see also \cite[Lemma 5.1]{GDV}), along with \eqref{eq:modmu}.

\begin{thm}\label{thm:1pharmsb}
Under the same hypotheses as Theorem \ref{thm:2pharmsb}:
$$
\T_1(\s)=M^2(\s)-\norm{M}^2\s,
$$
where $M^2$ is the iterated Milnor map.
Then $\s$ is a  harmonic unit vector field precisely when either of the following hold:
\begin{itemize}[leftmargin=1.5em, itemsep=0.5ex]
\item[\rm a)]
$\s$ is a principal structure direction.

\item[\rm b)]
$\s$ lies in a $2$-dimensional subalgebra.
\end{itemize}
\end{thm}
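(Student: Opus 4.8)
The plan is to read off $\T_1(\s)$ from the general theory of Section~\ref{sec:vbundles} in the trivial Newton-tensor case, and then apply the sphere-bundle criterion. For $r=1$ the relevant Newton tensor is $\newt=\nu^v_0(\s)=1_{TG}$, so $\diverge\newt=0$ by Definition~\ref{defn:twistrace} and $\tr{\newt}\nabla^2\s=\trace\nabla^2\s$; hence Theorem~\ref{thm:hphasecvb} collapses to $\T_1(\s)=\trace\nabla^2\s$. I would then quote equation~\eqref{eq:roughlap}, established en route to Theorem~\ref{thm:2pharmsb} from Lemma~\ref{lem:2deriv} and \eqref{eq:lcunimod}, which says $\trace\nabla^2\s=\s^{(2)}-\norm{M}^2\s$; since $\s^{(2)}=M^2(\s)$ by~\eqref{eq:phir}, this is exactly $\T_1(\s)=M^2(\s)-\norm{M}^2\s$. (The same formula can alternatively be extracted from \cite[Lemma~5.1]{GDV}.)

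Next I would invoke Theorem~\ref{thm:hphasecsb}: $\s$ is a harmonic unit vector field precisely when $\T_1(\s)$ is a pointwise multiple of $\s$. As $\s$, $M$ and therefore $\T_1(\s)$ are all invariant, this multiple is constant, so the condition becomes $M^2(\s)\in\R\/\s$; that is, $\s$ is an eigenvector of the self-adjoint endomorphism $M^2$, whose eigenvalues on the principal directions $\s_1,\s_2,\s_3$ are $\mu_1^{\;2},\mu_2^{\;2},\mu_3^{\;2}$.

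The only remaining point, and the one requiring a little care, is to match ``eigenvector of $M^2$'' with the alternatives (a) and (b). Every principal structure direction is an eigenvector of $M$, hence of $M^2$, giving~(a). Further eigenvectors appear exactly when $\mu_i^{\;2}=\mu_j^{\;2}$ for some $i\neq j$; by~\eqref{eq:modmu} this happens iff $\lam_i=\lam_j$ or $\lam_k=0$. If $\lam_i=\lam_j$ then also $\mu_i=\mu_j$ (from~\eqref{eq:mui}), so the new eigenplane of $M^2$ is already an eigenplane of $M$ and its unit vectors are still principal directions, i.e. case~(a). If instead $\lam_k=0$, then~\eqref{eq:commrel} gives $[\s_i,\s_j]=0$, so $\mathrm{span}(\s_i,\s_j)$ is a $2$-dimensional abelian subalgebra $\fh$ on which $M^2$ is scalar; this yields~(b), and it genuinely enlarges~(a) precisely when $\lam_i\neq\lam_j$. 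Conversely, as recalled at the start of Section~\ref{sec:3dlie}, every $2$-dimensional subalgebra is of this form, and on it $M^2$ is scalar again by~\eqref{eq:modmu}; conditions~(a) and~(b) may of course overlap. Assembling these cases gives the stated classification. I anticipate no real obstacle: the whole argument is a short specialisation of Theorems~\ref{thm:hphasecvb} and~\ref{thm:hphasecsb} together with a routine case analysis against Milnor's list, the only delicacy being to avoid double-counting when the $\lam_i$ or $\mu_i$ degenerate.
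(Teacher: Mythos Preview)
Your proposal is correct and follows essentially the same route as the paper: the paper also derives the formula for $\T_1(\s)$ from equation~\eqref{eq:roughlap} (which gives $\trace\nabla^2\s$), applies Theorem~\ref{thm:hphasecsb} to reduce the problem to finding eigenvectors of $M^2$, and then invokes~\eqref{eq:modmu} to identify the eigenspaces. Your case analysis of when $\mu_i^{\;2}=\mu_j^{\;2}$ is exactly the content the paper packages into the single reference to~\eqref{eq:modmu}.
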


Let $\S\subset\fg$ be the unit sphere, and for $r=1,2,3$ denote by $\H_r\subseteq\S$ the invariant $r$-harmonic unit vector fields, and by $\Z_r\subseteq\H_r$ those of zero $r$-th vertical energy.  Then $\H_3=\Z_3$, and comparing Theorems \ref{thm:3dzero}, \ref{thm:2pharmsb} and \ref{thm:1pharmsb} for $r=2$ yields:

\begin{cor}\label{cor:harmz}
$\H_r=\H_{r-1}\cup\Z_r$ for $r=2,3$.
\end{cor}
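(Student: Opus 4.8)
The plan is to deduce the corollary directly from the explicit classifications of $\H_1$, $\H_2$, $\Z_2$ and $\Z_3$ already established, treating $r=3$ and $r=2$ separately.

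For $r=3$ I would simply invoke the fact, noted just above the statement, that the $3$-rd vertical energy is trivial because the fibres of $UG\to G$ are $2$-dimensional, so that $\vrs_3^v(\s)=0$ for every invariant unit field and hence $\H_3=\Z_3=\S$. Since $\Z_3=\S$ we get $\H_2\cup\Z_3=\S=\H_3$ at once, so this case needs no further work.

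For $r=2$ the plan is to compare the three descriptions. By Theorem \ref{thm:2pharmsb}, $\s\in\H_2$ precisely when (a) $\s$ is a principal structure direction, or (b) $\s$ lies in a $2$-dimensional subalgebra, or (c) $\s$ is Ricci-flat; by Theorem \ref{thm:1pharmsb}, $\s\in\H_1$ precisely when (a) or (b) holds. It then remains to recognise that $\Z_2$ is exactly the set cut out by (c): the members of $\Z_2$ are the $2$-parallel invariant unit fields, which by Corollary \ref{cor:hphasecsb} coincide with the invariant $2$-harmonic sections of $TG$, and these are identified in Theorem \ref{thm:3dzero}\,(b) (using Definition \ref{defn:ricciflat} and the identity $\norm{\nabla\s\wedge\nabla\s}^2=\tfrac14\abs{\s}^2\abs{\Ric(\s)}^2$ from its proof) as precisely the Ricci-flat ones. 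Hence $\H_2$ is the union of $\H_1$ (conditions (a), (b)) and $\Z_2$ (condition (c)), which is the assertion.

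I do not anticipate a real obstacle: all the analytic and algebraic content is carried by Theorems \ref{thm:3dzero}, \ref{thm:2pharmsb} and \ref{thm:1pharmsb}. The only thing needing care is the bookkeeping for $r=2$, namely checking that the single condition distinguishing $\H_2$ from $\H_1$ is genuinely Ricci-flatness in the sense of Definition \ref{defn:ricciflat} (i.e.\ $\s\in\fn$), not merely the weaker pointwise condition $\Ric(\s,\s)=0$, and that this is exactly the condition defining $\Z_2$; the inclusion $\Z_2\subseteq\H_2$ needed for the union to make sense is automatic from $\Z_r\subseteq\H_r$.
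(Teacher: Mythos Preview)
Your proposal is correct and follows essentially the same route as the paper, which simply records that $\H_3=\Z_3$ and then obtains the $r=2$ case by comparing the classifications in Theorems \ref{thm:3dzero}, \ref{thm:2pharmsb} and \ref{thm:1pharmsb}. Your write-up merely spells out the bookkeeping that the paper leaves implicit.
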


It follows from Theorem \ref{thm:2pharmsb} (resp.\ Theorem \ref{thm:1pharmsb}) that the invariant $r$-harmonic unit vector fields on $(G,g)$ are determined by the absolute values of the principal Ricci curvatures (resp.\ Milnor numbers) when $r=2$ (resp.\ $r=1$).   From equations \eqref{eq:modrho} and
\eqref{eq:modmu}, if the $\abs{\rho_i}$ are distinct then so are the $\lam_i$, so the polar set:
$$
\P=\{\pm\s_1,\pm\s_2,\pm\s_3\}
$$  
is well-defined.  Furthermore if $\abs{\rho_k}$ is distinct from $\abs{\rho_i}$ and $\abs{\rho_j}$ for $\{i,j,k\}=\{1,2,3\}$ then $\lam_k$ is distinct from $\lam_i$ and $\lam_j$, so the polar pair $\P_k=\{\pm\s_k\}$ and corresponding equatorial circle:
$$
\C_{ij}=\{a_i\s_i+a_j\s_j:a_i^{\,2}+a_j^{\,2}=1\}
$$
are well-defined.  Theorems \ref{thm:3dzero}, \ref{thm:2pharmsb} and \ref{thm:1pharmsb} may now be summarised as follows:

\begin{cor}\label{cor:harm12}

\item{}
\begin{itemize}[leftmargin=1.6em, itemsep=1ex]
\item[\rm i)]
Suppose the $\abs{\rho_i}$ are distinct.  Then $\H_1=\H_2=\P$ and $\Z_1=\Z_2=\eset$.

\item[\rm ii)]
Suppose $\abs{\rho_i}=\abs{\rho_j}\neq\abs{\rho_k}$.  Then $\H_1=\H_2=\C_{ij}\cup\P_k$ and $\Z_1=\Z_2=\eset$, unless $\rho_i=\rho_j=0$ and $\abs{\mu_i}\neq\abs{\mu_j}$ in which case $\H_1=\P$ and $\Z_2=\C_{ij}$.

\item[\rm iii)]
Suppose $\abs{\rho_1}=\abs{\rho_2}=\abs{\rho_3}$.  Then $\H_1=\H_2=\S$ and $\Z_1=\Z_2=\eset$, unless $(G,g)$ is flat in which case $\H_1=\C_{ij}\cup\P_k$ with $\Z_1=\P_k$ and $\Z_2=\S$ if $G$ is non-abelian with $\mu_k\neq0$, or $\Z_1=\Z_2=\S$ if $G$ is abelian.
\end{itemize}
\end{cor}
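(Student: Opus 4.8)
The plan is to translate each of $\H_1$, $\H_2$, $\Z_1$, $\Z_2$ into a linear-algebra condition on the invariant field $\s=\ts\sum_i a_i\s_i$ and then read the classification off the coincidence pattern of the $\abs{\rho_i}$. Combining Theorem~\ref{thm:1pharmsb} with Theorem~\ref{thm:hphasecsb} at $q=1$, $\s\in\H_1$ exactly when $\T_1(\s)=M^2(\s)-\norm{M}^2\s$ is a pointwise multiple of $\s$, i.e.\ (as $\s$ is invariant) when $\s$ is an eigenvector of the diagonal operator $M^2=\mathrm{diag}(\mu_1^{\;2},\mu_2^{\;2},\mu_3^{\;2})$; likewise, by Theorem~\ref{thm:2pharmsb}, $\s\in\H_2$ exactly when $\s$ is an eigenvector of $\Ric^2=\mathrm{diag}(\rho_1^{\;2},\rho_2^{\;2},\rho_3^{\;2})$. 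Since both operators are diagonal in the principal frame, a unit $\ts\sum a_i\s_i$ is an eigenvector precisely when the indices $i$ with $a_i\neq0$ carry a common diagonal entry, so $\H_1$ (resp.\ $\H_2$) is governed by the partition of $\{1,2,3\}$ into the level sets of $i\mapsto\abs{\mu_i}$ (resp.\ $i\mapsto\abs{\rho_i}$). For the zero-energy subsets I would use Theorem~\ref{thm:3dzero}(a) (equivalently Theorem~\ref{thm:3dflat}(ii)), giving $\Z_1=\eset$ unless $(G,g)$ is flat, in which case $\Z_1$ is the unit circle of $[\fg,\fg]^{\perp}$; and Theorem~\ref{thm:3dzero}(b) with \eqref{eq:2wedge}, giving that $\Z_2$ is the unit sphere of the Ricci kernel $\fn$, of dimension $0$, $2$ or $3$ according as none, exactly two, or all of the $\rho_i$ vanish.

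The two partitions are bridged by \eqref{eq:modrho} and \eqref{eq:modmu}: from $\rho_i^{\;2}-\rho_j^{\;2}=4(\mu_j^{\;2}-\mu_i^{\;2})\mu_k^{\;2}$ and $\mu_i^{\;2}-\mu_j^{\;2}=(\lam_j-\lam_i)\lam_k$ one obtains the chain $\abs{\rho_i}\neq\abs{\rho_j}\Rightarrow\abs{\mu_i}\neq\abs{\mu_j}\Rightarrow\lam_i\neq\lam_j$, which is what makes $\P$, $\P_k$ and $\C_{ij}$ well-defined under the stated hypotheses; conversely, $\abs{\rho_i}=\abs{\rho_j}$ can fail to imply $\abs{\mu_i}=\abs{\mu_j}$ only when $\mu_k=0$, i.e.\ when the complementary principal direction is Ricci-singular. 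This dichotomy is what separates the generic from the exceptional sub-cases.

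The case split is then elementary. In case (i) the $\abs{\mu_i}$ are distinct, both operators have simple spectrum, so $\H_1=\H_2=\P$, while at most one $\rho_i$ vanishes (hence $\dim\fn=0$) and the metric is visibly not flat, so $\Z_1=\Z_2=\eset$. In case (ii) the eigenspace splitting of $\Ric^2$ is $\mathrm{span}(\s_i,\s_j)\oplus\mathrm{span}(\s_k)$, giving $\H_2=\C_{ij}\cup\P_k$; for $\H_1$ the dichotomy yields the same when $\mu_k\neq0$, in which case $\rho_i,\rho_j,\rho_k$ are all non-zero, so $\dim\fn=0$ and $\Z_1=\Z_2=\eset$, whereas when $\mu_k=0$ one has $\rho_i=\rho_j=0$ with $\fn=\mathrm{span}(\s_i,\s_j)$, so $\Z_2=\C_{ij}$, and $M^2$ acquires simple spectrum (hence $\H_1=\P$) precisely if in addition $\abs{\mu_i}\neq\abs{\mu_j}$. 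In case (iii) $\Ric^2$ is scalar, so $\H_1=\H_2=\S$ unless all $\rho_i$, hence by \eqref{eq:prinsec} all principal sectional curvatures, vanish; in that flat case `Milnor's list' leaves only $G$ abelian, with $[\fg,\fg]=0$ and $\Z_1=\Z_2=\S$, or $G$ non-abelian with a single non-zero Milnor number $\mu_k$, where $M^2$ has eigenline $\mathrm{span}(\s_k)$, so $\H_1=\C_{ij}\cup\P_k$, $\Z_1=\P_k$ and $\Z_2=\S$.

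The one step I expect to need real care, rather than routine computation, is the identification of the degenerate strata: for each coincidence pattern of the $\abs{\rho_i}$ one must determine from `Milnor's list' together with \eqref{eq:modrho}--\eqref{eq:modmu} exactly which algebra among $\fs\fu(2)$, $\fs\fl(2)$, $\fe(2)$, $\fe(1,1)$, $\nil$, $\fa$ realises it and under which relations among the $\lam_i$, and in particular whether $\dim\fn$ is $0$, $2$ or $3$ and whether the metric is flat. Everything downstream of that bookkeeping is diagonalisation of $2\times2$ and $3\times3$ symmetric matrices.
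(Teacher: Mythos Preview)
Your proposal is correct and follows essentially the same approach as the paper, which presents the corollary as a direct summary of Theorems~\ref{thm:3dzero}, \ref{thm:2pharmsb} and \ref{thm:1pharmsb} via the identities \eqref{eq:modrho} and \eqref{eq:modmu}, without a separate proof. Your write-up is in fact a more detailed elaboration of that summary: the key translation of $\H_1$ and $\H_2$ into eigenvector conditions for the diagonal operators $M^2$ and $\Ric^2$, the identification of $\Z_1,\Z_2$ from Theorem~\ref{thm:3dzero}, and the use of \eqref{eq:modrho}--\eqref{eq:modmu} to compare the two coincidence partitions are exactly the paper's intended reasoning.
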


By placing Corollary \ref{cor:harm12} alongside `Milnor's list', and noting from \eqref{eq:modmu} that $\abs{\mu_i}=\abs{\mu_j}$ if and only if $\lam_i=\lam_j$ or $\lam_k=0$, we obtain the following scheme, which should be compared to  that of \cite[Prop.\ 5.2]{GDV}.

\smallskip
\begin{itemize}[leftmargin=3.9em, itemsep=1ex]

\item[$\fa\colon$] 
$\Z_1=\Z_2=\H_1=\H_2=\S$. 

\item[$\nil\colon$]
$\H_1=\H_2=\S$ and $\Z_1=\Z_2=\eset$.

\item[$\fe(1,1)\colon$] 
$\H_1=\H_2=\C_{13}\cup\P_2$ and $\Z_1=\Z_2=\eset$, unless $\lam_1=-\lam_3$ in which case $\Z_2=\C_{13}$.

\item[$\fe(2)\colon$] 
$\H_1=\H_2=\C_{12}\cup\P_3$ and $\Z_1=\Z_2=\eset$, unless
$\lam_1=\lam_2$ in which case $\Z_2=\H_2=\S$ and $\Z_1=\P_3$.

\item[$\fs\fl(2)\colon$]
$\H_1=\H_2=\P$ and $\Z_1=\Z_2=\eset$, unless:

\par
$\lam_1=\lam_2$ in which case $\H_1=\H_2=\C_{12}\cup\P_3$;

\par
$\lam_1=\lam_2-\lam_3$ in which case $\H_2=\C_{13}\cup \P_2$ and $\Z_2=\C_{13}$.

\item[$\fs\fu(2)\colon$]
$\H_1=\H_2=\P$ and $\Z_1=\Z_2=\eset$, unless: 

\par
$\lam_1=\lam_2=\lam_3$ in which case $\H_1=\H_2=\S$;

\par
$\lam_1=\lam_2>\lam_3$ in which case $\H_1=\H_2=\C_{12}\cup\P_3$; 

\par
$\lam_1>\lam_2=\lam_3\neq \tfrac12\lam_1$ in which case $\H_1=\H_2=\C_{23}\cup\P_1$;

\par
$\lam_2=\lam_3=\tfrac12\lam_1$ in which case $\H_1=\H_2=\C_{23}\cup\P_1$ and $\Z_2=\C_{23}$;

\par
$\lam_1=\lam_2+\lam_3$ and $\lam_2\neq\lam_3$ in which case $\H_2=\C_{23}\cup\P_1$ and $\Z_2=\C_{23}$.  
\end{itemize}

\begin{rem}
Left-invariant metrics with $\H_1\subsetneq\H_2$ are supported (only) on $\fe(2)$, $\fs\fl(2)$ and $\fs\fu(2)$, and in each case $\H_2=\H_1\cup\Z_2$, as expected.
\end{rem}

Since $\H_1\subseteq\H_2$, all invariant harmonic unit vector fields are twisted $2$-skyrmions in  the bundle $UG\to G$ (Definition \ref{defn:twistskyr}), for all coupling constants.  In fact there are no others.

\begin{thm}
\label{thm:uniskyrme}
An invariant unit vector field $\s$ is a twisted $2$-skyrmion in the unit tangent bundle precisely when $\s\in\H_1$.
\end{thm}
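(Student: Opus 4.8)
The plan is to rewrite ``twisted $2$-skyrmion in $UG$'' as an algebraic parallelism condition on the components of $\s$ in a principal orthonormal basis, and to check that it is exactly the condition already known to characterise $\H_1$.

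First I would unwind the definitions. By Definition~\ref{defn:twistskyr}, $\s$ is a twisted $2$-skyrmion in $UG$ with coupling constants $c_1,c_2>0$ precisely when $\s$ is a critical point, among sections of $UG$, of the functional $c_1\/\E^v_{\/1}+c_2\/\E^v_{\/2}$. Adding the first-variation identity~\eqref{eq:firstvarvb} from the proof of Theorem~\ref{thm:hphasecsb} for $r=1$ and $r=2$ with weights $c_1$ and $c_2$, and using (as in that proof) that every section of $TG$ pointwise orthogonal to $\s$ arises as the field $\iota V$ of an admissible variation of $\s$ within $UG$, this is equivalent to $c_1\/\T_1(\s)+c_2\/\T_2(\s)$ being a pointwise multiple of $\s$. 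By the same theorem (with $r=1$), $\s\in\H_1$ is equivalent to $\T_1(\s)$ being a pointwise multiple of $\s$. Thus it suffices to show that these two parallelism conditions coincide.

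Next I would substitute the closed forms from Theorems~\ref{thm:1pharmsb} and~\ref{thm:2pharmsb}. Expanding $\s=a_1\s_1+a_2\s_2+a_3\s_3$ in the principal orthonormal basis $(\s_1,\s_2,\s_3)$ and recalling that $\rho_i=2\mu_j\mu_k$, these read
$$
\T_1(\s)=\ts\sum_i\mu_i^{\;2}a_i\/\s_i-\norm{M}^2\s,
\qquad
\T_2(\s)=-\tfrac14\ts\sum_i\rho_i^{\;2}a_i\/\s_i-\tfrac14\abs{\Ric(\s)}^2\s.
$$
Since the terms proportional to $\s$ play no role in parallelism with $\s$, being a twisted $2$-skyrmion amounts to the vector $\sum_i(c_1\mu_i^{\;2}-\tfrac{c_2}{4}\rho_i^{\;2})\/a_i\/\s_i$ being a multiple of $\s$, and lying in $\H_1$ amounts to $\sum_i\mu_i^{\;2}a_i\/\s_i$ being a multiple of $\s$. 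Each of these says that a diagonal endomorphism has $\s$ as an eigenvector, i.e.\ that its diagonal entries agree on the support of $\s$: for all $i\neq j$ with $a_ia_j\neq0$ one requires $c_1\mu_i^{\;2}-\tfrac{c_2}{4}\rho_i^{\;2}=c_1\mu_j^{\;2}-\tfrac{c_2}{4}\rho_j^{\;2}$ in the skyrmion case, and $\mu_i^{\;2}=\mu_j^{\;2}$ in the $\H_1$ case.

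Finally I would compare the two requirements. Substituting the identity $\rho_i^{\;2}-\rho_j^{\;2}=4(\mu_j^{\;2}-\mu_i^{\;2})\mu_k^{\;2}$ from~\eqref{eq:modrho}, the skyrmion requirement rearranges to $(c_1+c_2\mu_k^{\;2})(\mu_i^{\;2}-\mu_j^{\;2})=0$; since $c_1>0$ and $c_2,\mu_k^{\;2}\geqs0$ the factor $c_1+c_2\mu_k^{\;2}$ is strictly positive, so this is equivalent to $\mu_i^{\;2}=\mu_j^{\;2}$, which is exactly the $\H_1$ requirement. In particular the skyrmion condition is independent of the admissible coupling $(c_1,c_2)$, and $\s$ is a twisted $2$-skyrmion in $UG$ for some (equivalently, every) coupling if and only if $\s\in\H_1$. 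I do not expect a real obstacle here: the only steps requiring care are the reuse of the sphere-subbundle first-variation argument of Theorem~\ref{thm:hphasecsb} to obtain the parallelism reformulation, and the brief computation showing the coupling constants cancel, which rests on the positivity of $c_1+c_2\mu_k^{\;2}$ together with~\eqref{eq:modrho}.
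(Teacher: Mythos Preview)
Your proposal is correct and follows essentially the same route as the paper: reduce the twisted $2$-skyrmion condition to $c_1\T_1(\s)+c_2\T_2(\s)$ being parallel to $\s$ via the first-variation argument of Theorem~\ref{thm:hphasecsb}, substitute the formulas from Theorems~\ref{thm:1pharmsb} and~\ref{thm:2pharmsb}, and observe (using~\eqref{eq:modrho}, or equivalently $\rho_i^{\;2}=4\mu_j^{\;2}\mu_k^{\;2}$) that the resulting eigenvector condition factors as $(c_1+c_2\mu_k^{\;2})(\mu_i^{\;2}-\mu_j^{\;2})=0$, the first factor being strictly positive. The paper normalises $c_1=1$ and writes the diagonal entries as $\eta_i=\mu_i^{\;2}-c\/\mu_j^{\;2}\mu_k^{\;2}$, but the computation and conclusion are the same.
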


\begin{proof}
By equation \eqref{eq:firstvarvb} for the first variation of higher-power vertical energy of sections of vector bundles, and the argument used to prove Theorem \ref{thm:hphasecsb}, $\s$ is a twisted $2$-skyrmion if and only if $\T_1(\s)+c\,\T_2(\s)$ is a pointwise multiple of $\s$ for some $c>0$, and by Theorems \ref{thm:2pharmsb} and \ref{thm:1pharmsb} this is the case precisely when $\s$ is an eigenvector of:
$$
M^2-\frac{c}{4}\Ric^2.
$$
Now:
$$
M^2(\s)-\frac{c}{4}\Ric^2(\s)
=\ts\sum_i \eta_i\/a_i\/\s_i,
$$ 
where by \eqref{eq:prinric}: 
$$
\eta_i=\mu_i^{\;2}-c\/\mu_j^{\;2}\mu_k^{\;2},
$$
for $\{i,j,k\}=\{1,2,3\}$.  Then:
$$
\eta_i-\eta_j=(\mu_i^{\;2}-\mu_j^{\;2})(1+c\/\mu_k^{\;2}),
$$
hence $\eta_i=\eta_j$ if and only if $\abs{\mu_i}=\abs{\mu_j}$, so the classification scheme is identical to that for $\H_1$.
\end{proof}

We conclude with a classification of all invariant higher-power harmonic maps $G\to UG$.  This will ultimately require the $2$-nd vertical Newton tensor and its divergence.

\begin{lem}\label{lem:vnewt2}
Suppose $\s\in\H_1$.  Then for all $\v\in\fg$:

\begin{itemize}[leftmargin=1.5em, itemsep=1ex]
\item[\rm i)]
$\nu_2^v(\s)\colon\v
\mapsto\v^{(4)}-\vrs_1^v(\s)\/\v^{(2)}
+\vrs_2^v(\s)\v
+(\vrs_1^v(\s)-\abs{\s^{(1)}}^2)\<\v,\s^{(1)}\>\s^{(1)}$.

\item[\rm ii)]
$\diverge \nu_2^v(\s)=(\vrs_1^v(\s)-\abs{\s^{(1)}}^2)
\diverge \nu_1^v(\s)$.
\end{itemize}
\end{lem}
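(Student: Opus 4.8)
The plan is to compute $\nu_2^v(\s)$ directly from the recursion \eqref{eq:newtrec}, which gives
$$
\nu_2^v(\s)=\vrs_2^v(\s)\,1-\a\circ\nu_1^v(\s),
$$
where $\a$ is the vertical Cauchy-Green tensor of $\s$ and $\nu_1^v(\s)=\newt$ is the first vertical Newton tensor already determined in Theorem \ref{thm:newton}\,(i). So the first task is to evaluate $\a\circ\newt$. Using the closed form \eqref{eq:cgunimod} for $\a$ (viz.\ $\a(\v)=\v^{(2)}-\<\v,\s^{(1)}\>\s^{(1)}$) together with Theorem \ref{thm:newton}\,(i), I would apply $\a$ term by term to $\newt(\v)=\norm{M}^2\v-\v^{(2)}+\<\v,\s^{(1)}\>\s^{(1)}-\abs{\s^{(1)}}^2\v$. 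The only mildly delicate points are that $M$ (hence each iterate $\v\mapsto\v^{(k)}$) is self-adjoint and commutes with itself, so $(\v^{(2)})^{(2)}=\v^{(4)}$, and that $\a$ applied to the rank-one piece $\<\v,\s^{(1)}\>\s^{(1)}$ produces $\<\v,\s^{(1)}\>(\s^{(3)}-\abs{\s^{(1)}}^2\s^{(1)})$. Collecting terms and using $\vrs_1^v(\s)=\norm{M}^2-\abs{\s^{(1)}}^2$ from \eqref{eq:bend} should yield (i) after a cancellation; the hypothesis $\s\in\H_1$ enters precisely here, because by Theorem \ref{thm:1pharmsb} it forces $\s^{(2)}=M^2(\s)=\norm{M}^2\s$, which collapses several would-be extra terms (in particular it makes $\s^{(3)}=\norm{M}^2\s^{(1)}$ and $\s^{(4)}=\norm{M}^4\s$), leaving only the four displayed summands.

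For part (ii) I would differentiate exactly as in the proof of Theorem \ref{thm:newton}\,(ii): since the principal directions $\s_i$ are geodesic, Definition \ref{defn:twistrace} gives $\diverge\nu_2^v(\s)=\sum_i\nab{\s_i}(\nu_2^v(\s)\s_i)=\sum_i\mu_i\,\s_i\times\nu_2^v(\s)\s_i$ by \eqref{eq:lcunimod}. Feeding in the formula from (i), the terms $\v^{(4)}$ and $-\vrs_1^v(\s)\v^{(2)}$ and $\vrs_2^v(\s)\v$ each contribute a sum of the form $\sum_i c_i\,\s_i\times\s_i=0$ when $\v=\s_i$ — more precisely $\s_i^{(k)}$ is a scalar multiple of $\s_i$, so each of these three pieces is annihilated by the cross product, exactly as $\v^{(2)}$ and $\v$ were in Theorem \ref{thm:newton}. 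Only the rank-one term survives, and it reproduces $(\vrs_1^v(\s)-\abs{\s^{(1)}}^2)\sum_i\<\s_i,\s^{(1)}\>(\s_i\times\s^{(1)})=(\vrs_1^v(\s)-\abs{\s^{(1)}}^2)\,\s^{(2)}\times\s^{(1)}$; comparing with Theorem \ref{thm:newton}\,(ii) identifies $\s^{(2)}\times\s^{(1)}=\diverge\nu_1^v(\s)$, giving (ii).

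The main obstacle is bookkeeping in part (i): the composition $\a\circ\newt$ generates on the order of a dozen terms mixing the iterates $\v^{(2)},\v^{(4)}$, the rank-one projections onto $\s^{(1)}$, and the scalars $\norm{M}^2$, $\abs{\s^{(1)}}^2$, $\vrs_2^v(\s)$, and one must verify that the $\s\in\H_1$ condition produces precisely the cancellations that reduce this to the stated four-term expression — in particular that all terms quadratic in $\norm{M}^2$ or involving $\s^{(3)}$ disappear. I expect this to be a clean but careful computation; no conceptual difficulty beyond keeping track of which simplifications are legitimate only because $\s$ is harmonic. Part (ii) is then routine given (i), relying only on the geodesic property of the $\s_i$ and the cross-product identity already exploited in Theorem \ref{thm:newton}.
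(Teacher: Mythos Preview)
Your overall strategy is correct and essentially matches the paper's: use the Newton recursion, compute with the explicit formula \eqref{eq:cgunimod} for $\a$, invoke the $\H_1$ hypothesis to collapse terms, and then for (ii) differentiate using the geodesic property of the $\s_i$ so that only the rank-one piece survives. The paper organises (i) slightly differently---it expands $\nu_2^v(\s)=\vrs_2^v(\s)-\vrs_1^v(\s)\a+\a^2$ and computes $\a^2$ directly, rather than composing $\a$ with the full $\newt$---but this is algebraically equivalent to what you propose.

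There is, however, one concrete error. You assert that $\s\in\H_1$ forces $\s^{(2)}=\norm{M}^2\s$, and hence $\s^{(3)}=\norm{M}^2\s^{(1)}$, $\s^{(4)}=\norm{M}^4\s$. This is wrong: Theorem \ref{thm:1pharmsb} says only that $\s$ is an eigenvector of $M^2$, not that the eigenvalue is $\norm{M}^2$. Since $M$ is self-adjoint, $\<\s^{(2)},\s\>=\abs{\s^{(1)}}^2$, so the correct relation (used explicitly in the paper) is
$$
\s^{(2)}=\abs{\s^{(1)}}^2\,\s,
$$
whence $\s^{(3)}=\abs{\s^{(1)}}^2\,\s^{(1)}$. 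With your eigenvalue the cancellations will not come out as you expect (indeed $\norm{M}^2-\abs{\s^{(1)}}^2=\vrs_1^v(\s)$ by \eqref{eq:bend}, so the discrepancy is exactly the coefficient appearing in the final formula). Once this is corrected, your computation will go through. A minor slip in (ii): your displayed sum $\sum_i\<\s_i,\s^{(1)}\>(\s_i\times\s^{(1)})$ drops the factor $\mu_i$ coming from $\nab{\s_i}$; with it restored, $\sum_i\mu_i\<\s_i,\s^{(1)}\>\s_i=\s^{(2)}$ and you recover $\s^{(2)}\times\s^{(1)}=\diverge\nu_1^v(\s)$ as claimed.
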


\begin{proof}
Since $\s\in\H_1$ it follows from Theorem \ref{thm:1pharmsb} that $\s$ is an eigenvector of $M^2$:
\begin{equation}
\s^{(2)}=\abs{\s^{(1)}}^2\/\s.
\label{eq:vharm1}
\end{equation}
From \eqref{eq:newtrec}:
$$
\nu_2^v(\s)=\vrs_2^v(\s)-\vrs_1^v(\s)\a+\a^2,
$$
where $\a$ is the vertical Cauchy-Green tensor.  From \eqref{eq:cgunimod}:
\begin{gather*}
\a(\v)
=\v^{(2)}-\<\v,\s^{(1)}\>\s^{(1)},
\intertext{hence after simplification using \eqref{eq:vharm1}:}
\begin{aligned}
\a^2(\v)
&=\v^{(4)}-\abs{\s^{(1)}}^2\<\v,\s^{(1)}\>\s^{(1)},
\end{aligned}
\end{gather*}
which yields the formula for $\nu_2^v(\s)$.  Now, since the $\s_i$ are geodesic:
\begin{align*}
\diverge \nu_2^v(\s)
=\ts\sum_i \nab{\s_i}(\nu_2^v(\s)\s_i)
&=(\vrs_1^v(\s)-\abs{\s^{(1)}}^2)
\ts\sum_i \<\s_i,\s^{(1)}\>\mu_i\s_i\times\s^{(1)} \\[0.5ex]
&=(\vrs_1^v(\s)-\abs{\s^{(1)}}^2)\/\s^{(2)}\times\s^{(1)},
\end{align*}  
and the result follows from Theorem \ref{thm:newton}\,(ii).
\end{proof}

\begin{thm}\label{thm:hpharmaps}
An invariant unit vector field $\s$ is a $r$-harmonic map $G\to UG$ if and only if $\s$ is a principal structure direction {\rm(}$r=1,2${\rm)}, or $\s\in\H_1$ {\rm(}$r=3${\rm )}.
\end{thm}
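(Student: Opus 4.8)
The proof rests on applying Theorem~\ref{thm:hasec} to the submersion $UG\to G$, which is a Riemannian submersion with totally geodesic fibres: its horizontal distribution is $\ker\k$ (tangent to $UG$), and the fibre spheres $U_xG\cong S^2$ are totally geodesic in $UG$ because the inward umbilicity of $S^2$ inside the flat Euclidean fibre $T_xG$ is exactly the second fundamental form of $UG\subset TG$ along the radial normal. Consequently $\s$ is an $r$-harmonic map $G\to UG$ precisely when it is a twisted $r$-skyrmion in $UG$ with coupling constants $c_i=\binom{m-i}{r-i}$, $m=3$, \emph{and} the horizontal component $d\pi\circ\tau_r(\s)$ vanishes. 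I would handle these two conditions separately.

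For the twisted-skyrmion condition: since $\s$ has unit length the $3$-rd vertical energy $\E^v_3(\s)$ vanishes identically, so for each $r=1,2,3$ the relevant hybrid functional reduces to a combination $c_1\E^v_1(\s)+c_2\E^v_2(\s)$ with $c_1,c_2>0$ (for $r=1$ simply $c_1\E^v_1(\s)$, i.e.\ the harmonic section equation). By Theorem~\ref{thm:uniskyrme} every such twisted $2$-skyrmion lies in $\H_1$, and for $r=1$ this is the definition of $\H_1$ together with Theorem~\ref{thm:1pharmsb}; hence in all three cases the twisted-skyrmion Euler--Lagrange equation is exactly $\s\in\H_1$.

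It remains to compute the horizontal component, which is unaffected by whether $\s$ is viewed as a section of $UG$ or of $TG$ (the discrepancy being the radial, hence vertical, second fundamental form of $UG\subset TG$, killed by $d\pi$), and is therefore given by Theorem~\ref{thm:taurhoriz}: $d\pi\circ\tau_r(\s)=\diverge\nu_{r-1}(\s)+\sum_i R(\s,\eta(e_i))e_i$ with $\eta=(\nabla\s)\circ\nu_{r-1}(\s)$, taking $\{e_i\}=\{\s_1,\s_2,\s_3\}$. I would evaluate this under the standing hypothesis $\s\in\H_1$ --- equivalently $\s^{(2)}=\abs{\s^{(1)}}^2\s$, as in \eqref{eq:vharm1} --- using \eqref{eq:lcunimod}, \eqref{eq:3driem} and classical vector algebra. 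When $r=1$, $\nu_0(\s)=1$ has zero divergence and the horizontal tension is the ``twisted Ricci'' term $\sum_i R(\s,\nabla_{\s_i}\s)\s_i$, whose expansion should vanish precisely when $\s$ is a principal structure direction (and vanish on every such direction). When $r=2$ one inserts $\nu_1(\s)=\nu_1^v(\s)+(m-1)\,1$ and $\diverge\nu_1(\s)=\s^{(2)}\times\s^{(1)}$ from Theorem~\ref{thm:newton}, which for $\s\in\H_1$ becomes $\abs{\s^{(1)}}^2(\s\times\s^{(1)})$; again the outcome should be vanishing exactly on the principal directions. When $r=3$ one assembles $\nu_2(\s)=\nu_2^v(\s)+(m-2)\nu_1^v(\s)+\binom{m-1}{2}\,1$ from Lemma~\ref{lem:vnewt2} and Theorem~\ref{thm:hasec}, and here I expect a string of cancellations --- of the kind already seen in the proof of Theorem~\ref{thm:2pharmsb} --- forcing $d\pi\circ\tau_3(\s)=0$ for \emph{every} $\s\in\H_1$, so that no extra constraint appears.

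Combining the two equations: for $r=1,2$ the system is ``$\s\in\H_1$ and $\s$ a principal structure direction'', which collapses to ``$\s$ a principal structure direction'' because principal directions always lie in $\H_1$ (Theorem~\ref{thm:1pharmsb}\,(a)); for $r=3$ the horizontal equation is vacuous and the answer is $\s\in\H_1$. The main obstacle is the $r=3$ horizontal computation: producing the full second Newton tensor, forming the curvature sum $\sum_i R(\s,\eta(e_i))e_i$ with $\eta=(\nabla\s)\circ\nu_2(\s)$, and checking the cancellations that make it vanish identically on $\H_1$ --- this is exactly what makes the $r=3$ class strictly larger. The $r=2$ curvature sum is a secondary hurdle, and in the $r=1,2$ cases one must also run through those entries of Milnor's list on which $\H_1$ strictly exceeds the set of principal directions (a $2$-dimensional subalgebra with $\lam_i\neq\lam_j$) to secure the ``only if'' direction.
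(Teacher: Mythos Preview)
Your proposal is correct and follows essentially the same route as the paper: reduce via Theorem~\ref{thm:hasec} (applied to $UG\to G$) to the pair of conditions ``$\s$ is a twisted skyrmion'' and ``$d\pi\circ\tau_r(\s)=0$'', invoke Theorem~\ref{thm:uniskyrme} for the first, and compute the second via Theorem~\ref{thm:taurhoriz} using $\nu_{r-1}(\s)=\nu_{r-1}^v(\s)+\cdots$ from Theorem~\ref{thm:hasec}, Theorem~\ref{thm:newton} and Lemma~\ref{lem:vnewt2}. The paper organises the horizontal computation slightly differently, expressing $d\pi\circ\tau_2(\s)$ and $d\pi\circ\tau_3(\s)$ as explicit scalar multiples of $d\pi\circ\tau(\s)$ on the $2$-dimensional-subalgebra branch of $\H_1$ (the multiplier being $1+\mu_1^{\,2}$ for $r=2$ and identically $0$ for $r=3$), which is precisely the ``string of cancellations'' you anticipate; your outline is otherwise the paper's proof in condensed form.
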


\begin{proof}
We work sequentially through the `powers', leveraging our results as we go.

\medskip\noindent
a) $r=1$.
It follows from Theorem \ref{thm:hasec} that $\s$ is a harmonic map precisely when $\s\in\H_1$ and  $d\pi\circ\tau(\s)=0$.  By Theorem \ref{thm:taurhoriz} and equation \eqref{eq:lcunimod}:
\begin{equation}
d\pi\circ\tau(\s)
=\ts\sum_i R(\s,\nab{\s_i}\s)\s_i
=\ts\sum_i \mu_i\/R(\s,\s_i\times\s)\s_i, 
\label{eq:dpitau}
\end{equation}
and by \eqref{eq:3driem}:
\begin{align}
R(\s,\s_i\times\s)\s_i
&=a_i\/R(\s_i,\s_i\times\s)\s_i
=\ts\sum_{j,k}\e_{ijk}\,a_i\/a_j\/R(\s_i,\s_k)\s_i 
\notag \\
&=-\ts\sum_{j,k}\e_{ijk}\,a_i\/a_j\/K_{ik}\,\s_k.
\label{eq:unimodtauhoriz1}
\end{align}
Now  $\s\in\H_1$ if and only if $\s$ is a principal structure direction or $\s$ lies in a $2$-dimensional subalgebra (Theorem \ref{thm:1pharmsb}).  If the former then $\s_i$ may be chosen so that two of the $a_i$ vanish, hence $d\pi\circ\tau(\s)=0$.  If the latter then by \eqref{eq:commrel} $\lam_k=0=a_k$ for some $k$, say for argument $k=1$, so by \eqref{eq:mui}:
\begin{equation}
\mu_2=\tfrac12(\lam_3-\lam_2)=-\mu_3.
\label{eq:mulam}
\end{equation}
Then by \eqref{eq:prinsec}, \eqref{eq:dpitau} and \eqref{eq:unimodtauhoriz1}:
\begin{equation}
d\pi\circ\tau(\s)
=-\mu_2\/a_2\/a_3\/(K_{12}+K_{13})\,\s_1
=2\/\mu_2^{\;3} a_2\/a_3\,\s_1,
\label{eq:dpitausalg}
\end{equation}
which by \eqref{eq:mulam} vanishes precisely when $\s$ is a principal structure direction.   

\medskip\noindent
b) $r=2$.
It follows from Theorems \ref{thm:hasec} and \ref{thm:uniskyrme} that  $\s$ is a $2$-harmonic map precisely when $\s\in\H_1$ and $d\pi\circ \tau_2(\s)=0$.  By Theorem \ref{thm:hasec}:
\begin{equation}
\nu_1(\s)=\nu_1^v(\s)+2,
\label{eq:newt1}
\end{equation}
hence by Theorem \ref{thm:taurhoriz}:
$$
d\pi\circ\tau_2(\s)
=\diverge \nu_1^v(\s)
+2\/d\pi\circ\tau(\s)
+\ts\sum_i R(\s,(\nabla\s)\circ\nu_1^v(\s)\s_i)\s_i.
$$
When $\s\in\H_1$ it follows from \eqref{eq:firstvnewt} and \eqref{eq:cgunimod} that:
$$
(\nabla\s)\circ\nu_1^v(\s)\s_i
=(\vrs_1^v(\s)-\mu_i^{\;2})\/\nab{\s_i}\s,
$$
since by \eqref{eq:lcunimod} and \eqref{eq:vharm1} the covariant derivative of $\s$ along $\s^{(1)}$ vanishes.  Therefore by Theorem \ref{thm:taurhoriz} and equation \eqref{eq:lcunimod}:
$$
d\pi\circ\tau_2(\s)
=(\vrs_1^v(\s)+2)\/d\pi\circ\tau(\s)
+\diverge \nu_1^v(\s)
-\ts\sum_i \mu_i^3\/R(\s,\s_i\times\s)\s_i.
$$
If $\s$ is a principal structure direction then $\nu_1^v(\s)$ is solenoidal by Theorem \ref{thm:newton}, $d\pi\circ \tau(\s)=0$ by (a), and $R(\s,\s_i\times\s)\s_i=0$ by \eqref{eq:unimodtauhoriz1}; hence $d\pi\circ\tau_2(\s)=0$.  If $\s$ lies in a $2$-dimensional subalgebra, say $\lam_1=0=a_1$ and consequently $\mu_2=-\mu_3$, then by Theorem \ref{thm:newton} and \eqref{eq:dpitausalg}:
\begin{equation}
\diverge \nu_1^v(\s)
=\s^{(2)}\times\s^{(1)}
=-2\/\mu_2^{\;3}a_2\/a_3\,\s_1
=-d\pi\circ\tau(\s).
\label{eq:divT}
\end{equation}
Furthermore by \eqref{eq:unimodtauhoriz1} and \eqref{eq:dpitausalg}:
\begin{equation}
\ts\sum_i \mu_i^3\/R(\s,\s_i\times\s)\s_i
=-\mu_2^{\;3}a_2\/a_3(K_{12}+K_{13})\,\s_1 
=\mu_2^{\;2}\/d\pi\circ\tau(\s).
\label{eq:mucurv}
\end{equation}
Therefore:
\begin{equation}
d\pi\circ\tau_2(\s)
=(\vrs_1^v(\s)-\mu_2^{\;2}+1)\/d\pi\circ\tau(\s).
\label{eq:dpitau2}
\end{equation}
By \eqref{eq:normsq}:
\begin{equation}
\vrs_1^v(\s)
=\mu_1^{\;2}+\mu_2^{\;2},
\label{eq:eps1v}
\end{equation}
hence:
$$
d\pi\circ\tau_2(\s)
=(1+\mu_1^{\;2})\/d\pi\circ\tau(\s),
$$
which by (a) vanishes precisely when $\s$ is a principal structure direction. 

\medskip\noindent
c) $r=3$.  
Since $\tau_3^v(\s)=0$ it follows from Theorems \ref{thm:hasec} and \ref{thm:uniskyrme} that $\s$ is a $3$-harmonic map precisely when $\s\in\H_1$ and $d\pi\circ\tau_3(\s)=0$. By Theorem \ref{thm:hasec}:
$$
\nu_2(\s)=\nu_2^v(\s)+\nu_1^v(\s)+1
=\nu_1(\s)-1+\nu_2^v(\s),
$$ 
hence by Theorem \ref{thm:taurhoriz}:
$$
d\pi\circ\tau_3(\s)
=d\pi\circ\tau_2(\s)-d\pi\circ\tau(\s)+\diverge \nu_2^v(\s)
+\ts\sum_i R(\s,(\nabla\s)\circ\nu_2^v(\s)\s_i)\s_i.
$$
When $\s\in\H_1$ it follows from Lemma \ref{lem:vnewt2} that:
$$
(\nabla\s)\circ\nu_2^v(\s)\s_i
=\bigl(\vrs_2^v(\s)-\vrs_1^v(\s)\mu_i^{\;2}+\mu_i^{\;4}\bigr)\nab{\s_i}\s.
$$
Therefore by Lemma \ref{lem:vnewt2} again, Theorem \ref{thm:taurhoriz} and equation \eqref{eq:lcunimod}:
\begin{align*}
d\pi\circ\tau_3(\s)
&=d\pi\circ\tau_2(\s)+(\vrs_2^v(\s)-1)\/d\pi\circ\tau(\s)
+(\vrs_1^v(\s)-\abs{\s^{(1)}}^2)\diverge \nu_1^v(\s) \\
&\qquad
-\ts\sum_i (\vrs_1^v(\s)-\mu_i^{\;2})\mu_i^{\;3}R(\s,\s_i\times\s)\s_i.
\end{align*}
If $\s$ is a principal structure direction then each summand vanishes by (a) and (b).
If $\s$ lies in a $2$-dimensional subalgebra, with $\lam_1=0=a_1$ and $\mu_2=-\mu_3$, then:
$$
\abs{\s^{(1)}}^2
=\ts\sum_i \mu_i^{\;2}\/a_i^{\;2}
=\mu_2^{\;2},
$$ 
hence by \eqref{eq:dpitau2}, \eqref{eq:divT}, and \eqref{eq:mucurv}:
$$
d\pi\circ\tau_3(\s)
=C(\s)\,d\pi\circ\tau(\s),
$$
where:
$$
C(\s)=\vrs_2^v(\s)-\vrs_1^v(\s)\/\mu_2^{\;2}
+\mu_2^{\,4}.
$$
By \eqref{eq:2wedge} and \eqref{eq:prinric}:
$$
\vrs_2^v(\s)
=\tfrac14\abs{\Ric(\s)}^2
=\mu_1^{\;2}\mu_2^{\;2},
$$
hence by \eqref{eq:eps1v}:
\begin{equation*}
C(\s)
=\mu_1^{\;2}\mu_2^{\;2}-(\mu_1^{\;2}+\mu_2^{\;2})\mu_2^{\;2}
+\mu_2^{\;4}=0.
\qedhere
\end{equation*}
\end{proof}

\begin{rem}
The characterisation of invariant harmonic maps $\s\colon G\to UG$ agrees with that of \cite[Thm.\ 5.2]{GDV}.  Furthermore it was shown in \cite[Prop.\ 3.1]{TV} and \cite[Cor.\ 5.3]{GDV} that $\s$ is a minimal immersion precisely when $\s$ is a harmonic unit field; ie. $\s\in\H_1$.
\end{rem}

\end{document}